\newcommand{\Mdef}[2]{\newcommand{#1}{\relax \ifmmode #2 \else $#2$\fi}}
\newcommand{\cok}{\mathrm{cok}}
\newcommand{\rank}{\mathrm{rank}}
\newcommand{\supp}{\mathrm{supp}}
\newcommand{\im}{\mathrm{im}}
\newcommand{\sm }{\wedge}
\newcommand{\tensor}{\otimes}
\newcommand{\Hom}{\mathrm{Hom}}
\newcommand{\Ext}{\mathrm{Ext}}
\Mdef{\bhom}{\mathbf{\hat{H}om}}
\newcommand{\dgHom}{\mathrm{dgHom}}
\Mdef{\Mod}{\mathrm{mod}}
\newcommand{\st}{\; | \;}
\newtheorem{thm}{Theorem}[section]
\newtheorem{lemma}[thm]{Lemma}
\newtheorem{prop}[thm]{Proposition}
\newtheorem{cor}[thm]{Corollary}
\theoremstyle{definition}
\newtheorem{defn}[thm]{Definition}
\newtheorem{example}[thm]{Example}
\newtheorem{remark}[thm]{Remark}
\newtheorem{convention}[thm]{Convention}
\newcommand{\ppf}{{\flushleft {\bf Proof:}}}
\newcommand{\qqed}{\qed \\[1ex]}
\renewenvironment{proof}[1][\hspace*{-.8ex}]{\noindent {\bf Proof #1:\;}}{\qqed}
\Mdef{\PH} {\Phi^H}
\Mdef{\PK} {\Phi^K}
\Mdef{\PL} {\Phi^L}
\Mdef{\PT} {\Phi^{\T}}
\Mdef{\ef}{E{\cF}_+}
\Mdef{\etf}{\tilde{E}{\cF}}
\Mdef{\eg}{E{G}_+}
\Mdef{\etg}{\tilde{E}{G}}
\Mdef{\tf}{\T / \! \!  / {\cF}_+}
\newcommand{\etp}{\tilde{E}\cP}
\newcommand{\piH}{\pi^H}
\newcommand{\piA}{\pi^{\cA}}
\Mdef{\infl}{\mathrm{inf}}
\Mdef{\defl}{\mathrm{def}}
\Mdef{\res}{\mathrm{res}}
\Mdef{\ind}{\mathrm{inf}}
\Mdef{\univ}{\mathcal{U}}
\Mdef{\Fp}{\mathbb{F}_p}
\Mdef{\Zpinfty}{\Z /p^{\infty}}
\Mdef{\Zpadic}{\Z_p^{\wedge}}
\newcommand{\bi}{\begin{itemize}}
\newcommand{\be}{\begin{enumerate}}
\newcommand{\bc}{\begin{center}}
\newcommand{\bd}{\begin{description}}
\newcommand{\ei}{\end{itemize}}
\newcommand{\ee}{\end{enumerate}}
\newcommand{\ec}{\end{center}}
\newcommand{\ed}{\end{description}}
\newcommand{\dichotomy}[2]{\left\{ \begin{array}{ll}#1\\#2 \end{array}\right.}
\newcommand{\trichotomy}[3]{\left\{ \begin{array}{ll}#1\\#2\\#3 \end{array}\right.}
\newcommand{\adjunction}[4]{
\diagram
#1:#2 \rrto<0.7ex> &&
#3  \llto<0.7ex> :#4 
\enddiagram}
\newcommand{\lra}{\longrightarrow}
\newcommand{\lla}{\longleftarrow}
\newcommand{\spec}{\mathrm{spec}}
\newcommand{\abgp}{\mathbf{AbGp}}
\newcommand{\rings}{\mathbf{Rings}}
\Mdef{\we}{\mathbf{we}}
\Mdef{\fib}{\mathbf{fib}}
\Mdef{\cof}{\mathbf{cof}}
\Mdef{\BI}{\mathcal{BI}}
\newcommand{\cofibre}{\mathrm{cofibre}}
\newcommand{\colim}{\mathop{  \mathop{\mathrm {lim}} \limits_\rightarrow} \nolimits}
\newcommand{\holim}{\mathop{ \mathop{\mathrm {holim}} \limits_\leftarrow} \nolimits}
\Mdef{\A}{\mathbb{A}}
\Mdef{\B}{\mathbb{B}}
\Mdef{\C}{\mathbb{C}}
\Mdef{\D}{\mathbb{D}}
\Mdef{\E}{\mathbb{E}}
\Mdef{\T}{\mathbb{T}}
\Mdef{\F}{\mathbb{F}}
\Mdef{\G}{\mathbb{G}}
\Mdef{\I}{\mathbb{I}}
\Mdef{\N}{\mathbb{N}}
\Mdef{\Q}{\mathbb{Q}}
\Mdef{\R}{\mathbb{R}}
\Mdef{\bbS}{\mathbb{S}}
\Mdef{\Z}{\mathbb{Z}}
\Mdef{\bA}{\mathbb{A}}
\Mdef{\bB}{\mathbb{B}}
\Mdef{\bC}{\mathbb{C}}
\Mdef{\bD}{\mathbb{D}}
\Mdef{\bE}{\mathbb{E}}
\Mdef{\bF}{\mathbb{F}}
\Mdef{\bG}{\mathbb{G}}
\Mdef{\bH}{\mathbb{H}}
\Mdef{\bI}{\mathbb{I}}
\Mdef{\bJ}{\mathbb{J}}
\Mdef{\bK}{\mathbb{K}}
\Mdef{\bL}{\mathbb{L}}
\Mdef{\bM}{\mathbb{M}}
\Mdef{\bN}{\mathbb{N}}
\Mdef{\bO}{\mathbb{O}}
\Mdef{\bP}{\mathbb{P}}
\Mdef{\bQ}{\mathbb{Q}}
\Mdef{\bR}{\mathbb{R}}
\Mdef{\bS}{\mathbb{S}}
\Mdef{\bT}{\mathbb{T}}
\Mdef{\bU}{\mathbb{U}}
\Mdef{\bV}{\mathbb{V}}
\Mdef{\bW}{\mathbb{W}}
\Mdef{\bX}{\mathbb{X}}
\Mdef{\bY}{\mathbb{Y}}
\Mdef{\bZ}{\mathbb{Z}}
\Mdef{\cA}{\mathcal{A}}
\Mdef{\cB}{\mathcal{B}}
\Mdef{\cC}{\mathcal{C}}
\Mdef{\mcD}{\mathcal{D}} 
\Mdef{\cE}{\mathcal{E}}
\Mdef{\cF}{\mathcal{F}}
\Mdef{\cG}{\mathcal{G}}
\Mdef{\mcH}{\mathcal{H}} 
\Mdef{\cI}{\mathcal{I}}
\Mdef{\cJ}{\mathcal{J}}
\Mdef{\cK}{\mathcal{K}}
\Mdef{\mcL}{\mathcal{L}}
\Mdef{\cM}{\mathcal{M}}
\Mdef{\cN}{\mathcal{N}}
\Mdef{\cO}{\mathcal{O}}
\Mdef{\cP}{\mathcal{P}}
\Mdef{\cQ}{\mathcal{Q}}
\Mdef{\mcR}{\mathcal{R}}
\Mdef{\cS}{\mathcal{S}}
\Mdef{\cT}{\mathcal{T}}
\Mdef{\cU}{\mathcal{U}}
\Mdef{\cV}{\mathcal{V}}
\Mdef{\cW}{\mathcal{W}}
\Mdef{\cX}{\mathcal{X}}
\Mdef{\cY}{\mathcal{Y}}
\Mdef{\cZ}{\mathcal{Z}}
\Mdef{\tA}{\tilde{A}}
\Mdef{\tB}{\tilde{B}}
\Mdef{\tC}{\tilde{C}}
\Mdef{\tE}{\tilde{E}}
\Mdef{\tH}{\tilde{H}}
\Mdef{\tK}{\tilde{K}}
\Mdef{\tL}{\tilde{L}}
\Mdef{\tM}{\tilde{M}}
\Mdef{\tN}{\tilde{N}}
\Mdef{\tP}{\tilde{P}}
\Mdef{\Ab}{\overline{A}}
\Mdef{\Bb}{\overline{B}}
\Mdef{\Cb}{\overline{C}}
\Mdef{\Db}{\overline{D}}
\Mdef{\Eb}{\overline{E}}
\Mdef{\Hb}{\overline{H}}
\Mdef{\Ib}{\overline{I}}
\Mdef{\Kb}{\overline{K}}
\Mdef{\Lb}{\overline{L}}
\Mdef{\Mb}{\overline{M}}
\Mdef{\Nb}{\overline{N}}
\Mdef{\Qb}{\overline{Q}}
\Mdef{\Tb}{\overline{T}}
\Mdef{\qb}{\overline{q}}
\Mdef{\rb}{\overline{r}}
\Mdef{\tb}{\overline{t}}
\Mdef{\ub}{\overline{u}}
\Mdef{\vb}{\overline{v}}
\Mdef{\hc}{\hat{c}}
\Mdef{\he}{\hat{e}}
\Mdef{\hf}{\hat{f}}
\Mdef{\hA}{\hat{A}}
\Mdef{\hH}{\hat{H}}
\Mdef{\hJ}{\hat{J}}
\Mdef{\hM}{\hat{M}}
\Mdef{\hP}{\hat{P}}
\Mdef{\hQ}{\hat{Q}}
\Mdef{\bolda}{\mathbf{a}}
\Mdef{\boldb}{\mathbf{b}}
\Mdef{\boldD}{\mathbf{D}}
\Mdef{\fm}{\frak{m}}
\Mdef{\eps}{\epsilon}
\newcommand{\Gspec}{G\mbox{-{\bf spectra}}}
\newcommand{\spectra}{\mbox{-{\bf spectra}}}
\newcommand{\TC}{\cT \cC}
\newcommand{\cOcF}{\cO_{\cF}}
\newcommand{\cOcFH}{\cO_{\cF /H}}
\newcommand{\cOcFtH}{\cO_{\cF /\tH}}
\newcommand{\cOcFK}{\cO_{\cF /K}}
\newcommand{\cOcFL}{\cO_{\cF /L}}
\newcommand{\tcOHmod}{\mbox{tors-}\cO_H\mbox{-mod}}
\newcommand{\cOHmod}{\cO_H\mbox{-mod}}
\newcommand{\ecOGmod}{\mbox{e-}\cO_G\mbox{-mod}}
\newcommand{\cOcFHmod}{\cO_{\cF /H}\mbox{-mod}}
\newcommand{\cOmod}{\cO \mbox{-mod}}
\newcommand{\ecOmod}{\mbox{e-}\cO \mbox{-mod}}
\newcommand{\qccOmod}{\mbox{qc-}\cO \mbox{-mod}}
\newcommand{\qcecOmod}{\mbox{qce-}\cO \mbox{-mod}}
\newcommand{\ecOGLmod}{\mbox{e-}\cO_{G/L}\mbox{-mod}}
\newcommand{\qcUcOmod}{\mbox{qc-$U$-$\cO$-mod}}
\newcommand{\qceUcOmod}{\mbox{qce-$U$-$\cO$-mod}}
\newcommand{\qcBcOmod}{\mbox{qc-$B$-$\cO$-mod}}
\newcommand{\qceBcOmod}{\mbox{qce-$B$-$\cO$-mod}}
\newcommand{\torsRmod}{\mbox{tors-$R$-mod}}
\newcommand{\modules}{\mbox{-modules}}
\newcommand{\cAh}{\hat{\cA}}
\newcommand{\notH}{[\not \supseteq H]}
\newcommand{\notK}{[\not \supseteq K]}
\newcommand{\connsub}{\mathrm{ConnSub}}
\newcommand{\ConnSub}{\mathrm{ConnSub}}
\newcommand{\noth}{\not \supseteq H}
\newcommand{\etnh}{\tilde{E}[\noth]}
\newcommand{\TCG}{\cT \cC G}
\newcommand{\picA}{\pi^{\cA}}
\newcommand{\pK}{\phi^K}
\newcommand{\pL}{\phi^L}
\renewcommand{\PT}{\Phi^T}
\newcommand{\cEi}{\cE^{-1}}
\newcommand{\icE}{\cI\cE}
\newcommand{\vv}{\vee}
\newcommand{\cl}[1]{\overline{ \{ #1\} } }
\newcommand{\HomA}{\mathrm{Hom}_{\cA}}
\newcommand{\comp}[1]{#1^{\wedge}_{\cF}}
\newcommand{\inflGHG}{\mathrm{inf}_{G/H}^G}
\newcommand{\ev}{\mathrm{ev}}
\newcommand{\CHB}{\check{H}_B}
\newcommand{\set}[1]{\{ #1\}}
\newcommand{\erf}{E\langle F \rangle}
\newcommand{\lr}[1]{\langle #1 \rangle}
\newcommand{\Mbar}{\overline{M}}
\newcommand{\Hz}{H_1}
\newcommand{\Kz}{K_1}
\newcommand{\efp}{E\cF_+}
\newcommand{\defp}{DE\cF_+}
\newcommand{\siftyK}{S^{\infty V(K)}}
\newcommand{\siftyH}{S^{\infty V(H)}}
\newcommand{\etnotK}{\tilde{E}\notK}
\renewcommand{\deg}{DEG_+}
\newcommand{\egh}{EG/H_+}
\newcommand{\decf}{DE\cF_+}
\newcommand{\decfh}{DE\cF /H_+}
\newcommand{\decfk}{DE\cF /K_+}
\newcommand{\piG}{\pi^G}
\renewcommand{\piH}{\pi^H}
\newcommand{\piK}{\pi^K}
\newcommand{\piGH}{\pi^{G/H}}
\newcommand{\piGK}{\pi^{G/K}}
\newcommand{\piAG}{\pi^{\cA (G)}}
\renewcommand{\PT}{\Phi^T}
\newcommand{\ertK}{E\langle \tK \rangle}
\newcommand{\erK}{E\langle K \rangle}
\newcommand{\ek}{E\langle K \rangle}
\newcommand{\etHH}{E\langle \tH /H \rangle}
\newcommand{\etKK}{E\langle \tK /K \rangle}
\newcommand{\sF}{\sigma_F^0}
\newcommand{\sH}{\sigma_H^0}
\newcommand{\sK}{\sigma_K^0}
\newcommand{\sL}{\sigma_L^0}
\newcommand{\efh}{E\cF /H_+}
\newcommand{\efk}{E\cF /K_+}
\newcommand{\inflGKG}{\mathrm{inf}_{G/K}^G}
\newcommand{\siftyc}{S^{\infty \bC}}
\newcommand{\freec}{S(\infty \bC)_+}
\newcommand{\id}{\mathrm{id}}
\newcommand{\spr}{\mathrm{spread}}
\begin{document}
\title[Rational torus-equivariant stable homotopy I]
{Rational torus-equivariant stable homotopy I:
calculating  groups of stable maps.}
\author{J.P.C.Greenlees}
\address{School of Mathematics and Statistics, Hicks Building, 
Sheffield S3 7RH. UK.}
\email{j.greenlees@sheffield.ac.uk}
\date{}

\begin{abstract}
We construct an abelian category $\cA (G)$ of sheaves over 
a category of closed subgroups of the $r$-torus $G$ and show it is
of finite injective dimension.  It can be used as a model 
for rational  $G$-spectra in the sense that there is a homology 
theory 
$$\piA_*: \Gspec \lra \cA (G)$$
on rational $G$-spectra with values in $\cA (G)$ and the associated
 Adams spectral sequence converges for all rational $G$-spectra and collapses
at a finite stage. 

\end{abstract}
\thanks{I am grateful to J.P.May and B.E.Shipley for numerous comments 
on early drafts of this paper. }
\maketitle

\tableofcontents

\part{Introduction.}

\section{Summary.}
\label{sec:Summary}
\subsection{The context.}
Non-equivariantly, rational stable homotopy theory is very simple: 
the homotopy category of rational spectra is equivalent to 
the category of graded rational vector spaces, and all cohomology 
theories are ordinary.
The author has conjectured that for each compact Lie group $G$, there
is an abelian category $\cA (G)$ so that the homotopy category of 
rational $G$-spectra is equivalent to the derived category of $\cA (G)$:
$$\mathrm{Ho}(\mbox{$G$-spectra}/\Q ) \simeq D(\cA (G)).$$
The conjecture describes various properties of $\cA (G)$, and in particular
asserts that its injective dimension is equal to the rank of $G$. 
Thus one can expect to  make complete calculations 
in rational equivariant stable homotopy theory, and one can classify cohomology 
theories. Indeed, one can construct a cohomology theory
by writing down an object in $\cA (G)$: this is how $SO(2)$-equivariant 
elliptic cohomology was constructed in \cite{ellT}, and work on curves of higher genus 
is underway.

The conjecture is elementary for finite groups where 
$\cA (G)=\prod_{(H)}\mbox{$\Q W_G(H)$-mod}$ \cite{Tate}, so that 
all cohomology theories are again ordinary. The conjecture has been 
proved for the rank 1 groups $G=SO(2), O(2), SO(3)$ in \cite{s1q, o2q, so3q}, 
where $\cA (G)$ is more complicated.
It is natural to go on to conjecture that the equivalence comes from a 
Quillen equivalence
$$\mbox{$G$-spectra}/\Q  \simeq dg \cA (G), $$
for suitable model structres, and Shipley has established this for $G=SO(2)$
\cite{Shipley}. The present paper is the first in a series \cite{tnq2,tnq3} 
intended to establish this Quillen equivalence, and hence the derived equivalence,
when $G$ is the $r$-dimensional torus for any $r \geq 0$.

\subsection{The results.}
The purpose of the present paper is to provide a means for
calculation in the homotopy category of rational $G$-spectra, where, 
for the rest of the paper, $G$ is an $r$-dimensional torus. In Part 2 
we construct an abelian category $\cA (G)$ (Definition \ref{defn:AG}) 
and in \ref{injdimisrank} show it is of finite injective dimension. The 
category $\cA (G)$ is a category of sheaves $M$ on the space of subgroups
of $G$, and the value $M(U)$ of a sheaf on a set $U$ of subgroups captures
the information about spaces with isotropy groups in $U$. In Part 3
we construct a homology theory
$$\piA_*: \Gspec \lra \cA (G)$$
with values in $\cA (G)$, and show that it is an effective
calculational tool in that there is an Adams spectral sequence.
The main theorem is as follows. 

\begin{thm}
There is a spectral sequence
$$\Ext_{\cA (G)}^{*,*}(\piA_*(X), \piA_*(Y)) \Rightarrow [X,Y]^G_*, $$
strongly convergent for all $G$-spectra $X$ and $Y$. 

The category $\cA (G)$ is of injective dimension $\leq 2r$, and so the 
spectral sequence is concentrated between rows $0$ and $2r$: it therefore 
collapses at the $E_{2r+1}$-page.
\end{thm}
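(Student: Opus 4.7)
The plan is to set up the spectral sequence exactly as one does for the classical Adams spectral sequence, which reduces the theorem to two inputs: (i) finite injective dimension of $\cA (G)$, which is already available since Theorem \ref{injdimisrank} gives this dimension as at most $2r$; and (ii) the existence of \emph{Eilenberg--MacLane $G$-spectra} realising injective objects of $\cA (G)$, together with a representability statement. The rest is the usual exact-couple argument applied to an Adams tower.

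Concretely, for each injective object $I$ of $\cA (G)$ I would construct a rational $G$-spectrum $H(I)$ with $\piA_*(H(I))$ concentrated in degree zero and equal to $I$ there, and such that the natural map
$$[X,H(I)]^G_* \lra \Hom_{\cA (G)}(\piA_*(X), I)_{-*}$$
is an isomorphism for every rational $G$-spectrum $X$. Given this, starting from any $Y$, I would take a finite injective resolution
$$0 \lra \piA_*(Y) \lra I^0 \lra I^1 \lra \cdots \lra I^{2r} \lra 0$$
in $\cA (G)$ and realise it inductively as a tower $Y = Y_0 \to Y_1 \to \cdots \to Y_{2r+1} = *$, letting $Y_{s+1}$ be the fibre of a map $Y_s \to H(I^s)$ chosen to realise the $s$-th step of the resolution on $\piA_*$. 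A straightforward induction using the long exact sequence from each fibration identifies $\piA_*(Y_s)$ with the appropriate syzygy of the resolution. Applying $[X,-]^G_*$ to the tower produces an exact couple whose $E_1$-term is, by representability,
$$E_1^{s,t} = [X,\Sigma^t H(I^s)]^G \cong \Hom_{\cA (G)}(\piA_*(X), I^s)_{t},$$
and whose $d_1$ differential is that of the injective resolution. Thus $E_2^{s,t} = \Ext_{\cA (G)}^{s,t}(\piA_*(X), \piA_*(Y))$.

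Because the tower has only $2r+1$ stages, the induced filtration on $[X,Y]^G_*$ is finite and strong convergence is automatic. The $E_2$-page is concentrated in rows $0 \leq s \leq 2r$; since $d_n$ has bidegree $(n, n-1)$, every $d_n$ with $n \geq 2r+2$ vanishes on bidegree grounds, giving collapse at $E_{2r+1}$. The principal obstacle is step (ii), the Eilenberg--MacLane construction: one must show that \emph{every} injective of $\cA (G)$ is representable by a $G$-spectrum and verify the adjunction-type isomorphism. This is where the sheaf-theoretic structure of $\cA (G)$ developed in Part~2 must be matched against the geometry of rational $G$-spectra of the $r$-torus, and is the real heart of the argument; once this compatibility is established, the spectral sequence, its convergence and its collapse are formal consequences.
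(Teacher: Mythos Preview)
Your outline follows essentially the same Adams-tower strategy as the paper, but there is one genuine gap and one overstatement worth flagging.

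The gap is your claim that $Y_{2r+1}=*$, or equivalently that strong convergence is automatic from the finiteness of the tower. What your inductive construction gives you is only that $\piA_*(Y_{2r+1})=0$ (the last syzygy vanishes). Concluding $Y_{2r+1}\simeq *$ requires knowing that $\piA_*$ \emph{detects equivalences}, and this is not formal: it is a Whitehead-type theorem for the functor $\piA_*$ (Lemma~\ref{univWhitehead} in the paper), proved by induction on $\dim G$ using the geometric fixed point Whitehead theorem, faithful flatness of $\cEi_K\cOcF$ over $\cOcFK$, and a $Rep(G)$-isomorphism argument to handle the free part. Without this step the spectral sequence you build converges to $[X,\holim_s Y_s]^G_*$ rather than to $[X,Y]^G_*$, and you have not shown these agree.

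The overstatement is in step (ii): you propose to realise \emph{every} injective of $\cA(G)$ by an Eilenberg--MacLane-type spectrum. The paper does not attempt this and it would be hard. Instead one identifies a small explicit family of injectives $I(\tK)=f_{K_1}(H_*(BG/\tK))$, shows these are realised (up to suspension) by the concrete $G$-spaces $\erK$, proves the representability isomorphism only for these, and separately checks (Lemma~\ref{enoughinjectives}) that sums of such injectives are enough to resolve any object. Your phrase ``concentrated in degree zero'' is also slightly off: the objects of $\cA(G)$ are already graded, and the realising spectra satisfy $\piA_*(\ertK)\cong\Sigma^{c}I(\tK)$ rather than having homotopy in a single degree.
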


\begin{remark}
We will show in \cite{tnq2} that in fact $\cA (G)$ is of injective
dimension precisely $r$, so that the spectral sequence collapses at the
$E_{r+1}$-page.
\end{remark}
The special case $r=1$ provided the basis for the results of \cite{s1q}.
In addition to being a  powerful tool, it is a perfectly practical 
one, since it is straightforward to make calculations in $\cA (G)$. The theorem essentially 
describes the category of rational $G$-spectra up to a finite
filtration. For many purposes this is quite sufficient, but 
the other papers in the series go further. In \cite{tnq3} Shipley and 
the author intend to combine the Adams spectral spectral sequence of the 
present paper with the  work of Schwede and Shipley \cite{SS} to show 
that the category of rational 
$G$-spectra is Quillen equivalent to $dg\cA (G)$. The paper
\cite{tnq2} provides the information about the algebraic structure
of the category $\cA (G)$ required in \cite{tnq3}. The principal 
results of the present paper were proved by 2001, but publication was 
delayed until the form of results required by \cite{tnq3} was clear.

\begin{convention}
Certain conventions are in force throughout the paper and the
series. The most important is that {\em everything is rational}: 
all  spectra  and homology theories are rationalized without
comment. The second is the standard one that `subgroup' means 
`closed subgroup'. We attempt to let inclusion of subgroups follow
the alphabet, so that when there are inclusions they are in 
the pattern $G \supseteq H \supseteq K \supseteq L$. The
other convention beyond the usual one that $\Hz$ denotes the identity 
component of $H$ is that $\tH$ denotes a subgroup with identity component
$H$ and $\hH$ denotes a subgroup in which $H$ is cotoral
(i.e., so that  $H\subseteq \hH$ and  $\hH /H$  is a torus).

Finally, cohomology is unreduced unless indicated to the contrary with a
tilde, so that $H^*(BG/K)=\tilde{H}^*(BG/K_+)$ is the unreduced cohomology ring.
\end{convention}

\subsection{Some standard constructions.}
We recall some standard constructions from equivariant homotopy theory.
As a general reference on stable equivariant homotopy theory, we use
\cite{lmsm}.

For any family $\mcH$ of subgroups (i.e., a  collection closed under passage
to conjugates and smaller subgroups), we may consider the 
$G$-space $E\mcH$, which is universal amongst $\mcH$-spaces; it is
 characterized by the fact that its $K$-fixed points
are either empty (if $K \not \in \mcH$) or contractible (if $K \in \mcH$). 
We then have a basic cofibre sequence
$$E\mcH_+ \lra S^0 \lra \tilde{E}\mcH, $$
and $\tilde{E}\mcH$ may also be constructed as the join $S^0 *E\mcH$.

The families playing a significant role for us are the family $\cF$
of all finite subgroups of $G$ and the family $\notK$
of subgroups not containing $K$. The role of $\notK$ is worth 
further comment. If $X$ is a $G$-space and we write  $\PK X$ for its
$K$-fixed points, it is clear that the inclusion $\PK X \lra X$ 
induces an equivalence
$$\etnotK \sm X \simeq \etnotK \sm \PK X .$$
In fact $\PK$ admits an extension to a functor 
$\PK : G\spectra \lra G/K\spectra$ (known as
the {\em geometric} fixed point functor), and the same
equivalence holds. 

This holds for any group $G$, provided $K$ is normal, but
for an abelian group there is a particularly convenient
construction of $\tilde{E}\notK$. Indeed, if 
we define
$$\siftyK =\colim_{V^K=0} S^V, $$
the special from of $G$ means
$$\tilde{E}\notK \simeq \siftyK .$$
(If $L$ does not contain $K$ then we can find 
a representation $V$ with $V^K=0$ and $V^L\neq 0$. Indeed, 
$K$ has non-trivial image in $G/L$, and hence 
$G/L$ has a character $\alpha$ with $\alpha^K=0$). 
This construction using spheres gives a connection with 
Euler classes and thence to commutative algebra. This is
a key ingredient in the results.  

Choosing an orientation of ordinary cohomology,  we
have Euler classes of vector bundles. Thus if $W$ is a representation of
$G/K$ then there is an associated Euler class $c_H(W) \in H^{|W|}(BG/K)$.
These enter the picture since $c_H(W)$ is the pullback of the 
Thom class $\tau_H(W)$ along the inclusion $S^0 \lra S^W$:
$$\begin{array}{ccc}
\tilde{H}^{|W|}(EG/K_+\sm_{G/K} S^W) & \lra& \tilde{H}^{|W|}(EG/K_+\sm_{G/K} S^0)\\
\tau_H(W)& \longmapsto &c_H(W).
\end{array}$$

\subsection{Outline of the argument.}
First we must construct the the category $\cA (G)$.
This is a category of sheaves on the space
of subgroups of $G$. In fact we consider the `natural' open sets
$$U(K)=\{ H \st H \supseteq K\}$$
of isotropy groups, where $K$ runs through the {\em connected}
subgroups of $G$, and
an object $M$ of $\cA (G)$ is specified by its values
$M(U(K))$  and the restriction maps $M(U(K)) \lra M(U(H))$
when $U(K) \supseteq U(H)$ (i.e., when $K  \subseteq H$). These 
are required to  satisfy certain conditions that we explain shortly, but
$M(U(K))$ contains information about isotropy groups $H$ in $U(K)$. 

It is rather easy to write down the functor $\piA_*(X)$.

\begin{defn}
\label{defn:piA}
For a $G$-spectrum $X$ we define $\piA_*(X)$ on $U$-open subsets by 
$$\piA_*(X)(U(K))=\pi^G_*(D\efp \sm \siftyK \sm X).$$
Here $\efp$ is the universal space for the family $\cF$ of finite subgroups with 
a disjoint basepoint added and $D\efp =F(\efp , S^0)$ is its functional dual
(the function spectrum of maps from $\efp$ to $S^0$).
Note that, since the $G$-space $\siftyK$ is defined by  
$$\siftyK =\colim_{V^K=0} S^V, $$
when $K \subseteq H$ there is a map 
$\siftyK \lra \siftyH$ inducing the restriction map 
$\piA_*(X)(U(K))\lra \piA_*(X)(U(H))$.\qqed
\end{defn}

The objects of $\cA (G)$ have the structure of  modules over the 
structure sheaf $\cO$ introduced formally in Subsection 
\ref{subsec:Structure}. The definition of the structure sheaf
is based on  the ring
$$\cOcF =\prod_{F\in \cF }H^*(BG/F_+), $$
where the product is over the family $\cF$ of finite subgroups of $G$. For 
this we use Euler classes: indeed if $V$ is a representation of $G$
we may defined $c(V) \in \cO_{\cF}$ by taking its components
$c(V)(F)=c_H(V^F) \in H^*(BG/F_+)$ to be classical ordinary homology 
Euler classes. 

The sheaf $\cO$ is defined by 
$$\cO (U(K))=\cEi_K \cOcF$$
where $\cE_K =\{ c(V) \st V^K=0\} \subseteq \cOcF$ is the multiplicative
set of Euler classes of $K$-essential representations.

To see that $\piA_*(X)$ is a module over
$\cO$, the key is to understand $S^0$.
 
\begin{thm}
\label{piASisO}
The image of $S^0$ in $\cA (G)$ is the structure sheaf:
$$\cO =\piA_*(S^0).$$
\end{thm}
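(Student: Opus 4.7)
The plan is to verify the asserted equality of sheaves section by section: for each connected subgroup $K$ of $G$, I would produce an isomorphism
$$\piA_*(S^0)(U(K)) = \pi^G_*(D\efp \wedge \siftyK) \;\lraiso\; \cEi_K \cOcF = \cO(U(K))$$
compatible with restriction along inclusions $K \subseteq H$ of connected subgroups. This decomposes naturally into two steps, one formal and one substantive.

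\textbf{Step 1 (formal: $\siftyK$ inverts Euler classes).} Since $\siftyK = \colim_{V^K=0} S^V$ is a filtered colimit, commuting $\pi^G_*$ past it gives
$$\pi^G_*(D\efp \wedge \siftyK) = \colim_{V^K=0}\; \pi^G_{*-|V|}(D\efp),$$
whose transition maps are multiplication by the classes $c(V) \in \pi^G_{-|V|}(D\efp)$ induced from the unit $S^0 \to S^V$. Granting the identification $\pi^G_*(D\efp) \cong \cOcF$ of Step 2 and matching each $c(V)$ with the tuple $(c_H(V^F))_F$ of ordinary Euler classes, this colimit is precisely the Euler-class localization $\cEi_K \cOcF$.

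\textbf{Step 2 (substantive: computing $D\efp$).} The heart of the argument is
$$\pi^G_*(D\efp) \;\cong\; \cOcF \;=\; \prod_{F\in\cF} H^*(BG/F_+).$$
The approach is to exploit that $D\efp = F(\efp, S^0)$ is the $\cF$-Borel cohomotopy spectrum of $S^0$ and to decompose it rationally across the finite subgroups of $G$, each of which is its own conjugacy class since $G$ is abelian. For each $F\in\cF$ one has $\efp^F \simeq S^0$ (every subgroup of a finite group is finite), so the $F$-component of the Borel-type calculation reduces rationally to ordinary rational cohomology of $BG/F_+$ by the rational equivalence of stable cohomotopy with ordinary cohomology on classifying spaces of tori. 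Non-finite isotropy $H$ contributes nothing because $\efp^H \simeq *$. Assembling these pieces yields the product $\cOcF$; the ring structure induced by the unit $S^0 \to D\efp$ matches the pointwise product, and by functoriality of Thom and Euler classes each $c(V) \in \pi^G_{-|V|}(D\efp)$ is identified with the tuple $(c_H(V^F))_F$, which is exactly the datum needed to glue Step 1 to Step 2.

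\textbf{Step 3 (naturality).} For a connected inclusion $K \subseteq H$ one has $V^H \subseteq V^K$, so $V^K = 0$ forces $V^H = 0$; this simultaneously produces the canonical map $\siftyK \to \siftyH$ and the containment $\cE_K \subseteq \cE_H$. Under the isomorphisms of Steps 1 and 2 the induced restriction maps therefore realize the natural further localization $\cEi_K \cOcF \to \cEi_H \cOcF$, so the sheaf isomorphism is well-defined and natural. The principal obstacle is Step 2: the rational splitting of $\cF$-Borel cohomotopy across the (conjugacy classes of) finite subgroups of the torus, together with the matching of this splitting with ordinary cohomology of the classifying spaces of the subquotients, is the substantive content and relies on the rational structure theory specific to tori. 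Steps 1 and 3 are then formal consequences of the colimit description of $\siftyK$ and the indexing behaviour of $\cE_K$.
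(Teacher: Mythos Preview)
Your outline matches the paper's: reduce to $K=1$ by quasi-coherence (your Step~1 is the content of Proposition~\ref{piAqc}) and identify $\pi^G_*(D\efp)$ with $\cOcF$ (your Step~2 is Theorem~\ref{endef}). Two points need correction or expansion.

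In Step~1, the displayed equality $\pi^G_*(D\efp\sm\siftyK)=\colim_{V^K=0}\pi^G_{*-|V|}(D\efp)$ is not correct as written: there is no uniform integer shift by $|V|$, since $D\efp\sm S^V$ is not equivalent to $\Sigma^{|V|}D\efp$. What is actually needed is the Thom isomorphism $S^V\sm D\efp\simeq\prod_F S^{|V^F|}\sm D\erf$ established in Section~\ref{sec:Euler}, which makes $c(V)$ the \emph{non-homogeneous} element of $\cOcF$ with $F$-component $c_H(V^F)\in H^{|V^F|}(BG/F)$. Once that is in place the colimit is indeed $\cEi_K\cOcF$, and your subsequent text shows you know the right Euler classes; but the Thom isomorphism is a genuine ingredient, not a formality, and your displayed formula hides it.

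In Step~2 you correctly flag the splitting and the identification of each factor with $H^*(BG/F)$ as the substantive content, but the sketch you give (fixed points $\efp^F\simeq S^0$ plus ``rational equivalence of stable cohomotopy with ordinary cohomology on classifying spaces of tori'') does not connect these ingredients into a proof of $[\erf,\erf]^G_*\cong H^*(BG/F)$. The paper first obtains the splitting $\efp\simeq\bigvee_{F\in\cF}\erf$ from Burnside-ring idempotents, and then proves Theorem~\ref{enderf} by building $\erf$ cell by cell from the basic cells $\sF$: since $[\sF,\sF]^G_*=\Lambda(H_1(G/F))$ is exterior on $r$ generators, the Koszul resolution of $\Q$ over this exterior algebra dictates a CW structure on $\erf$ whose cells are indexed by monomials in $H^*(BG/F)$, giving both the additive and multiplicative identification. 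Your Borel-cohomology heuristic could perhaps be completed along different lines, but as it stands Step~2 is a statement of what must be proved rather than a proof.
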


We prove this in the course of  Sections \ref{sec:Basic}
to \ref{sec:piA}.

There are then two requirements
on $\cO$-modules to be objects of $\cA (G)$. Firstly they must 
be {\em quasi-coherent}, in that 
$$M(U(K))=\cEi_K M(U(1)),  $$
where $\cE_K$ is the set of Euler classes of $K$-essential representations
as before. The  definition of $\piA_*(X)$ shows that quasi-coherence for 
$\piA_*(X)$ is just a matter of understanding Euler classes, 
which we do in Section \ref{sec:Euler}.
The second condition involves the ring $\cOcF$ and  its analogue 
$$\cOcFK =\prod_{\tK \in \cF /K}H^*(BG/\tK_+)$$
for the quotient modulo a connected subgroup $K$, 
where $\cF /K$ is the family of subgroups $\tK$ of $G$
with identity component $K$.
The second condition is  that the object should be {\em extended}, in 
the sense that there is a specified isomorphism 
$$M(U(K))=\cEi_K \cOcF \otimes_{\cOcFK} \phi^K M$$
for some $\cOcFK$-module $\phi^KM$.
The extendedness of  $\piA_*(X)$  follows from a construction of the geometric
fixed point functor, and it turns out that 
$$\phi^K \piAG_*(X)= \pi^{G/K}_*(D\efp \sm \Phi^K(X)),$$
where $\PK$ is the geometric fixed point functor. This sketches the proof of 
the following result, proved formally in Section \ref{sec:piA}.

\begin{cor}
\label{piAinAG}
The functor $\piA_*$ takes values in the abelian category $\cA (G)$.
\end{cor}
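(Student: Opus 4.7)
The plan is to verify that $\piA_*(X)$ satisfies the three defining conditions for an object of $\cA (G)$: it carries an $\cO$-module structure, it is quasi-coherent, and it is extended. Each reduces to unpacking the definition
$$\piA_*(X)(U(K)) \;=\; \pi^G_*(D\efp \sm \siftyK \sm X)$$
together with standard properties of geometric fixed points and the Euler class analysis of Section~\ref{sec:Euler}. The $\cO$-module structure is immediate from Theorem~\ref{piASisO}, which identifies $\piA_*(S^0)$ with $\cO$: the unit equivalence $S^0 \sm X \simeq X$ and the ring structure on $D\efp \sm \siftyK$ induce compatible pairings $\cO (U(K)) \otimes \piA_*(X)(U(K)) \to \piA_*(X)(U(K))$, natural under the restriction maps induced by $\siftyK \to \siftyH$ for $K \subseteq H$.

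Quasi-coherence follows from $\siftyK = \colim_{V^K=0} S^V$ and the commutation of $\pi^G_*$ with filtered colimits:
$$\piA_*(X)(U(K)) \;=\; \colim_{V^K=0} \pi^G_*(D\efp \sm S^V \sm X).$$
The structure maps in this system are (up to suspension) multiplication by the Euler classes $c(V) \in \cE_K$, and by Section~\ref{sec:Euler} this colimit is exactly the localization $\cE_K^{-1}\pi^G_*(D\efp \sm X)$. Since the only $V$ with $V^1 = 0$ is $V = 0$, one has $\piA_*(X)(U(1)) = \pi^G_*(D\efp \sm X)$, so that $\piA_*(X)(U(K)) = \cE_K^{-1}\piA_*(X)(U(1))$, which is exactly quasi-coherence.

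For extendedness, I would define
$$\phi^K \piA_*(X) \;:=\; \pi^{G/K}_*(D\efp \sm \PK X),$$
which is a $\cOcFK$-module by Theorem~\ref{piASisO} applied to the torus $G/K$, whose family of finite subgroups corresponds canonically to $\cF/K$. The defining property of the geometric fixed point functor provides the equivalence $\siftyK \sm X \simeq \siftyK \sm \inflGKG \PK X$, so the required isomorphism
$$\cE_K^{-1}\cOcF \otimes_{\cOcFK} \phi^K \piA_*(X) \;\lracong\; \piA_*(X)(U(K))$$
will come from a change-of-groups computation identifying $\pi^G_*(D\efp \sm \siftyK \sm \inflGKG Y)$, for a $G/K$-spectrum $Y$, with the base change of $\pi^{G/K}_*(D\efp \sm Y)$ along $\cOcFK \to \cE_K^{-1}\cOcF$.

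The main obstacle is precisely this change-of-groups identification. The functional dual $D\efp$ does not descend through inflation in a naive way, so one has to decompose $\cOcF = \prod_{F \in \cF} H^*(BG/F_+)$ factor by factor and observe that Euler-localization at $\cE_K$ kills the factors indexed by finite subgroups $F$ that are not contained in some $\tK \in \cF/K$. The surviving factors reassemble, after localization, in a way that exhibits $\cE_K^{-1}\cOcF$ as a flat (indeed essentially free) extension of $\cOcFK$; this flatness is what lets the change-of-groups calculation be reorganized as an honest tensor product and furnishes the extendedness isomorphism.
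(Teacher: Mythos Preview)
Your treatment of the $\cO$-module structure and of quasi-coherence is essentially the paper's argument (Proposition~\ref{piAqc}), and is fine.

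The gap is in extendedness. You correctly identify the target isomorphism
\[
\cEi_K\cOcF \otimes_{\cOcFK} \pi^{G/K}_*\!\bigl(DE(\cF/K)_+\sm \PK X\bigr)
\;\cong\;
\pi^G_*\!\bigl(D\efp \sm \siftyK \sm X\bigr),
\]
but the sketch you give does not establish it. The claim that Euler-localization at $\cE_K$ ``kills the factors indexed by finite subgroups $F$ that are not contained in some $\tK\in\cF/K$'' is vacuous: every finite $F$ lies in $FK\in\cF/K$, so nothing is killed. More seriously, even granting that $\cEi_K\cOcF$ is flat over $\cOcFK$ (which is true, Corollary~\ref{faithfulflatness}), flatness by itself does not produce the tensor-product description of $\piA_*(X)(U(K))$ for general $X$; it only tells you that tensoring is exact. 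You still need to construct a comparison map and prove it is an isomorphism, and your outline does not do this.

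The paper's argument (Lemma~\ref{picAatH}) is different in flavor. Rather than working algebraically factor by factor, it writes down a natural map of $G$-spectra
\[
\inflGKG\bigl(\PK X \sm DE(\cF/K)_+\bigr)\;\lra\; X \sm D\efp \sm \siftyK,
\]
observes that the induced map on $\pi^G_*$ is a natural transformation of homology theories in $X$, and then checks it is an isomorphism on $X=S^0$ (where it is the identity by definition) and hence on all cells $G/H_+$ via Thom isomorphisms and the $Rep(G)$-isomorphism argument of Section~\ref{sec:RepG}. This sidesteps the difficulty that $D\efp$ does not commute with inflation: one never needs that, only a comparison map that can be tested on generators.
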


This outlines the construction of the functor $\piA_*$. To 
construct the Adams spectral sequence we need to realize
an injective resolution of $\piA_*(X)$ in $\cA (G)$, and to 
prove the Adams spectral sequence works for maps into an 
injective. We must therefore first realize sufficiently many 
injectives. We show that there is a right adjoint $f_K$ to the evaluation 
at $K$ functor $\phi^K$ (described in detail in Subsection 
\ref{subsec:evalext}).
Thus for a suitable module $N$ over $\cOcFK$ we may form an object
$f_K(N)$ in $\cA (G)$. Taking $N=H_*(BG/\tK)$ for a subgroup 
$\tK$ with identity component $K$, viewed as an $\cOcFK$-module
via projection onto $H^*(BG/\tK)$, we obtain the object 
$I(\tK)=f_K(H_*(BG/\tK))$. This is injective since $H_*(BG/\tK)$
is injective over $H^*(BG/\tK)$.
It turns out (Lemma \ref{piAehisinj}) that a suspension of 
$I(\tK)$ is realized by the $G$-space  $\ertK$ defined in 
\ref{defn:erK}  in the sense that 
$$I(\tK )=\piA_*(\Sigma^{-c}\ertK )$$
where $\tK$ is of codimension $c$.
Next we  need to understand maps into injectives, showing that 
$$\piA_*:[X,I]^G_* \stackrel{\cong} \lra \HomA (\piA_*(X) , \piA_*(I))$$
for these sufficiently many injectives $I$. This constructs a spectral 
sequence with the correct $E_2$-term. Finally we must show convergence
by showing that $\piA_*(X)=0$ implies $X \simeq *$. This is an 
easy consequence of the geometric fixed point Whitehead theorem
\ref{univWhitehead}.

\section{Formal behaviour of equivariant homology theories.}
\label{sec:MotI}

There are two ways one may hope to encode data about the homology 
of fixed point sets. They are close enough to be confusing, so
it is worth making them explicit at the outset. We consider the case
that $G$ is a torus, and the reader may want to bear in mind the 
examples of stable homotopy and $K$-theory. 

Given  a $G$-equivariant homology theory $\tilde{F}^G_*(\cdot )$ and a $G$-space
$X$ we may consider the system of values
$$H \longmapsto \tilde{F}^{G/H}_*(\PH X),  $$
where $\PH X$ denotes the (geometric) $H$-fixed point set of $X$.
If $K \subseteq H$ there is an inclusion $\PH X \lra \PK X$
of $G/K$-spaces and hence a map $\tilde{F}^{G/K}_* (\PH X) \lra \tilde{F}^{G/K}_*(\PK X)$, 
but in general there will not be a map 
$\tilde{F}^{G/H}_*(\PH X ) \lra \tilde{F}^{G/K}_*(\PK X)$.
However in favourable circumstances there are maps of this sort, and 
we accordingly call a {\em contravariant} functor $M$ on subgroups an {\em 
inflation functor}. If we are just given the values $\tilde{F}^{G/H}_*(\PH X)$ and no 
structure maps between them we refer to an {\em inflation system}.
Because of the variance and the motivation we sometimes write
$M (G/H)$ to suggest dependence on the quotient group $G/H$.

On the other hand, we may always consider 
$$X \sm \tE\notH \simeq \PH X \sm \tE\notH$$
where $\notH$ is the family of subgroups not containing $H$. 
If $K \subseteq H$ then there is a natural map 
$\tE \notK \lra \tE \notH$, and hence a map 
$$\tilde{F}^G_*(\PK X \sm \tE \notK) \lra  \tilde{F}^G_*(\PH X \sm \tE \notH)  .$$
Now, 
$$\tE \notK =\colim_{V^K=0}S^V, $$
and, under orientability hypotheses,  $\tilde{F}^G_*(\PK X \sm \tE \notK) $ may be
expressed as a localization $\cEi_K \tilde{F}_*^G(X)$
of $\tilde{F}^G_*(X)$, where $\cE_K$ is some multiplicatively closed subset
of $F^G_*$, generated by ``Euler classes'' $e (V)$ with $V^K=0$.
We will call a {\em covariant} functor on subgroups of this form a 
{\em localization functor}. 

Two major differences should be emphasized. First,  inflation functors are
contravariant in the subgroup whilst localization functors are
covariant. Second,  a localization functor takes values which are modules over
$F^G_*$, whereas an inflation functor typically does not. 

When we are fortunate enough that $F$ gives an inflation functor and
also has a localization theorem it may happen that the two structures
are related in the sense that 
$$\tilde{F}^G_*(\PK X \sm \tE \notK)= \cEi_K F^G_* \tensor_{F^{G/K}_*}
\tilde{F}^{G/K}_*(\PK X) .$$
In other words, the favourable case is when we have the following
structure, which will be properly defined and axiomatized in 
later sections.
\be 
\item $R$, a ring-valued inflation functor (such as $K \longmapsto 
F^{G/K}_* $)
\item $M$ an inflation system, which is
 module valued functor over $R$, (such as $K \longmapsto 
\tilde{F}^{G/K}_*(\PK X) $)
\item $LM$ a localization functor, which is module valued
over $R(G/1)$, (such as $K \longmapsto \tilde{F}^G_*(\PK X \sm \tE \notK)$, and 
\item an isomorphism 
$$LM (K)=\cEi_K R(G/1) \tensor_{R(G/K)}M(G/K). $$
\ee
In this case we say that the  localization functor
$LM$ is {\em extended} with associated  inflation system $M$.
However, be warned that, even if $M$ is an inflation functor (i.e., it
has contravariant structure maps), this does not supply the structure
maps for $\cEi_K R(G/1) \tensor_{R(G/K)}M(G/K)$, so that $LM$ requires further
data.

\part{Categories of $U$-sheaves.}

The objects of the abelian category $\cA (G)$ are sheaves of modules over
a sheaf $\cO$ of rings. Accordingly we begin Section \ref{sec:Standard}
by describing the inflation 
functor on which the structure sheaf $\cO$ is based; we can then 
introduce Euler classes and proceed with the definition.
Once $\cA (G)$ is defined, we begin to control it in Section 
\ref{sec:Filtration}: first we import objects
from module categories, and then show that these suffice to build
all the objects and prove that $\cA (G)$ has finite injective dimension.

\section{The standard abelian category.}
\label{sec:Standard}
The present section leads up to the definition of the
standard model as a certain category of $U$-sheaves of
$\cO$-modules. Before we can express the definition 
we need to introduce the structure sheaf $\cO$, and
before we can do this (Subsection \ref{subsec:Structure})
 we need to describe its associated
inflation functor (Subsection \ref{subsec:FundInfl}) and
Euler classes (Subsection \ref{subsec:Euler}).

\subsection{The fundamental inflation functor.}
\label{subsec:FundInfl}

The entire structure we discuss is founded on the inflation functor
described in this section. We let $\ConnSub (G)$ denote the category 
of connected subgroups of $G$ and inclusions. 
An inflation functor is a contravariant  functor
$$M : \ConnSub (G) \lra \abgp  $$
We write $M_{G/H}$ for its value on $H$. 
The purpose of this section is to introduce a ring valued inflation functor
$$\cOcF : \ConnSub (G) \lra \rings, $$
whose value at $K$ is written $\cOcFK$.
Other notations can be convenient and have been used elsewhere, for example
$\cOcFK=\cO_{\cF (K)}=\cO (K)=R_{G/K}$, but we will stick to the above 
notation in this series. 

 For any connected subgroup $K$, we let
$$\cF /K =\{ \tK \st K \mbox{ of finite index in } \tK\}$$
denote the set of subgroups of $G$ with identity component $K$, which is in 
natural correspondence with the finite subgroups of $G/K$. Now take 
$$\cOcFK=\prod_{\tK \in \cF /K} H^*(BG/\tK)$$
where the product is over the set of subgroups with identity component
$K$.

To describe the inflation maps, suppose
 $K$ and $L$ are connected and $L \subseteq K$ and $L$ is of finite index in 
$\tL$. The inclusion defines a quotient map $q: G/L \lra G/K$ and hence
$$ q_*: \cF /L \lra \cF /K.$$

The  inflation map $\cOcFK \lra \cOcFL$ has $\tL$th component
$$\cOcFK=\prod_{\tK \in \cF /K} H^*(BG/\tK) 
\lra H^*(BG/q_*\tL) \lra  H^*(BG/\tL)$$
given by projection onto the term $H^*(BG/q_*\tL)$ followed by the inflation
map induced  by  the quotient $G/\tL  \lra G/q_*\tL$.


Now an {\em inflation system} of $\cOcF$-modules is given by specifying
 an $\cOcFK$-module $M_{G/K}$ for each subgroup $K$. No structure maps 
relating these modules are required.


\subsection{Euler classes.}
\label{subsec:Euler}

We are now in a position to describe 
the Euler classes which are used in the localization process. 
This will allow us to discuss localization functors, and hence 
 quasi-coherent and extended $U$-sheaves. 

The Euler class of an arbitrary representation is defined
in terms of those of simple representations using
 the product formula $e(V \oplus W )=e(V) e(W)$. 
Since $G$ is abelian,  it is enough to 
define Euler classes $e(\alpha)\in \cOcF$ for one dimensional 
representations $\alpha$. We take 
$$e(\alpha ) \in \cOcF=\prod_{F \in \cF } H^*(BG/F), $$
to be defined by 
$$e(\alpha)(F)=
\dichotomy{1 & \mbox{ if } \alpha^F=0}{c_1(\alpha)& \mbox{ if
$\alpha$ is trivial on $F$.}}$$

 This is not a homogeneous element. The best way to sanitize this is to 
introduce  an invertible sheaf associated to a representation (corresponding
to suspension), and make $e(\alpha)$ a section of that. Thus  $e(\alpha)$ 
should be thought of as a section of a line bundle vanishing
at a finite group $F$ if and only if $F$ acts trivially on 
$\alpha$. Since $H$ acts trivially on $\alpha $ 
if and only if  all finite subgroups of $H$ act trivially, we can think
of $e(\alpha)$ as defining the `$U$-closed' set of subgroups of 
$\ker (\alpha)$.

There are enough 
representations of $G$ in the sense that if $H$ is fixed, it is separated
from all subgroups (except those containing it) by an  Euler class:
  if $H \not \subseteq K$ there is a representation 
$\alpha$ trivial over $K$ and non-trivial over $H$. Accordingly, if $H$ is 
connected, the open set $U(H)$ of subgroups containing $H$ is defined by 
inverting the set 
$$\cE_H =\{ e(W ) \st W^H=0\}$$ 
of Euler classes of representations generated by characters not arising from $G/H$.
If $\tH$ has identity component $H$ we let $\cE_{\tH}=\cE_H$.

\begin{example}
For example if $G$ is the circle group and $z$ is the natural 
representation, $e(z)$ is supposed to define $\set{1}$. We 
think of $e(z)$ as the function (or rather global section) given on 
finite subgroups by
$$e(z)(F)=\dichotomy
{c & \mbox{ if } F=1}
{1 & \mbox{ if } F\neq 1.}$$
\end{example}

\begin{remark}
The correspondence with divisors can be very important 
(see \cite{ellT}).
By definition $e(\alpha) $ vanishes to the first order at 
finite subgroups of $\ker  (\alpha)$.  It is thus natural 
to view the line bundle of which $e(\alpha)$ is a 
generating section as corresponding to the `divisor' 
$\overline{\ker (\alpha)}$, and call it $\cO (-\ker (\alpha))$.  
\end{remark}

\subsection{The structure sheaf and the category $\protect\cA (G)$.}
\label{subsec:Structure}

We now turn to localization functors. We introduce terminology 
so that we can view them as giving sheaves of functions on the 
space of subgroups. 

For each closed connected subgroup $K$ of $G$ we consider the set $U(K)$
of subgroups containing $K$ (which 
can be identified with the set of subgroups of $G/K$). 
We view the collection 
$$\cU =\{ U(K) \st K \mbox{ a connected subgroup } \}$$
as the generating set for the $U$-topology on the set of subgroups of $G$. 
We carry the letter $U$ throughout the discussion  to distinguish it
from a second topology  introduced in \cite{tnq2}.

A $U$-sheaf $M$ is a contravariant functor $M: \cU \lra \abgp$. 
(The terminology is reasonable  since any cover of a set 
$U(K)$ by sets from $\cU$  must involve $U(K)$ itself, so the sheaf 
condition is automatically  satisfied).  Thus if $K $ and $L$ are connected
with $L \subseteq K$ then $U(L)\supseteq U(K)$ and there is a restriction map
$M(U(L)) \lra M(U(K))$. Note that this is covariant for the inclusion of 
subgroups and is therefore simply another  way of speaking of a localization 
functor. 

We may construct a $U$-sheaf from the ring $\cOcF$.

\begin{defn} 
(i) The structure $U$-sheaf $\cO$ is defined by 
$$\cO (U(K))=  \cEi_K \cOcF, $$
and the structure maps are the localizations.
Thus $\cO$ is a $U$-sheaf of rings, and its ring of global 
sections is $\cOcF$.\\
(ii) A sheaf of $\cO$-modules is a $U$-sheaf $M$ with the 
additional structure that  $M(U(H))$ is  
a module over $\cO (U(H))=\cEi_H\cOcF  $. The restriction maps 
are required to be module maps: if $L \subseteq K$,  the restriction map
$$M(U(L ) ) \lra M(U(K)), $$
for the inclusion  $U(L) \supseteq U(K)$ is required to be a map 
of $\cO (U(L))$-modules. 
\end{defn}

We shall be working almost
exclusively with sheaves $M$ of $\cO$-modules: the standard
model for rational $G$-spectra will be a category of dg sheaves 
of $\cO$-modules with additional structure. 

First we restrict attention to modules which are determined 
by their value on $U(1)$. This is analogous to forming a 
sheaf over $\spec (R)$ from an $R$-module $M$: its values over
the open set on which $x$ is invertible is $M[1/x]$. We also borrow
the well-established and unwieldy terminology from this situation. 

\begin{defn} A {\em quasi-coherent} $U$-sheaf (qc $U$-sheaf) of 
$\cO$-modules is one in which for each  connected subgroup $K$, the 
restriction map $M(U(1)) \lra M(U(K))$ is the map inverting the
multiplicatively closed set $\cE_{K}$. 
\end{defn}

\begin{remark} 
\label{rmk:qc}
(i) The structure sheaf $\cO$  is quasi-coherent.\\
(ii) For a quasicoherent sheaf, all values $M(U(H))$ are determined
by the value $M(U(1))$.\\
(iii) The quasi-coherence  condition has a major effect. For example, 
if $M$ is a  quasi-coherent  module only nonzero on  $U(1)$ then 
$M(U(1))$ is  necessarily a  torsion module, since it 
vanishes if we invert $\cE_K$ for any non-trivial $K$.
\end{remark}

The second restriction is to sheaves of $\cO$-modules 
which are extended from quotient groups in the following sense. 

\begin{defn}
A sheaf $M$ of $\cO$-modules is {\em extended} if we are given 
a tensor decomposition
$$M(U(K))=\cEi_K \cOcF \otimes_{\cOcFK} \pK M$$ 
where $\pK M$ is an $\cOcFK$-module, so that $\{ \phi^{\bullet}M\}$
is an inflation system of $\cOcF$-modules.
This splitting must be compatible with  restriction maps 
in that if $L \subseteq K$, the restriction is  obtained from a map 
$$\pL M \lra \cEi_{K/L} \cOcFL \otimes_{\cOcFK}\pK M$$
by extension of scalars. A morphism of extended modules is required
to arise from a map of inflation systems: 
if  $\theta : M \lra N$ is a morphism of extended modules, for each 
$K$ we have a diagram
$$\begin{array}{ccc}
M(U(K)) & \stackrel{\theta (U(K))} \lra & N(U(K))\\
=\downarrow &&\downarrow =\\
\cEi_K \cOcF \otimes \pK M &\stackrel{1 \otimes \pK \theta}
\lra & \cEi_K \cOcF \otimes \pK N .
\end{array}$$
 We write $\ecOmod$ for the category of extended $\cO$-modules.  
\end{defn}

\begin{remark} 
(i) The condition on restriction maps makes sense  since
$$\cEi_L \cOcF \otimes_{\cOcFL} \cEi_{K/L} \cOcFK \otimes_{\cOcFK}(\cdot)
=\cEi_K \cOcF \otimes_{\cOcFK}(\cdot) .$$
The point here is that representations $\alpha$ of $G/L$  
with $\alpha^{K/L}=0$ (whose Euler classes
lie in $\cE_{K/L}$)  map to representations of $G$ with $\alpha^K=0$
(whose Euler classes lie in $\cE_K$) under inflation.\\
(ii) The structure sheaf $\cO$ is extended
since 
$$\cEi_K\cOcF =\cEi_K \cOcF \otimes_{\cOcFK} \cOcFK .$$
(iii) The  splitting of $M(U(K))$ is specified by the {\em basing map} 
$$\pK M \lra M(U(K))$$
corresponding to the inclusion of the unit. \\
(iv) The reason for the notation is that $\pK M$ is analagous
to the value $E^{G/K}_*(\PK X)$ of a cohomology theory on geometric
fixed points (see also \ref{fixedpoints}).
\end{remark}

\begin{remark}
We may therefore think of a quasi-coherent extended $U$-sheaf $M$ of 
$\cO$-modules  as an $\cOcF$-module $M(U(1))$ together with additional 
structure. The additional structure specifies particular ``relative 
trivializations'' of $\cEi_{K}M(U(1))$:
$$\cEi_KM(U(1))=M(U(K))=\cEi_K \cOcF \otimes_{\cOcFK}\pK  M .$$
The whole structure is given by $M(U(1))$ together with 
 basing maps $\pK M  \lra \cEi_K M(U(1))$ giving the splittings.
\end{remark}

Finally, we may introduce the class of sheaves directly relevant to 
us. The  explicit identification of this category is, perhaps, the 
main achievement of this paper.

\begin{defn} 
\label{defn:AG}
The {\em standard abelian category} 
$$\cA =\cA (G) =\qcecOmod$$ 
is the category of all quasi-coherent
extended $U$-sheaves of $\cO$-modules (qce \cO-modules). 
We also use the notation
$$\cAh =\cAh (G) =\ecOmod$$
for the category of extended modules with no restriction on 
structure maps.  
\end{defn}

It is  useful to have an algebraic analogue of the fixed point functor.
This is defined on extended $\cO$-modules. 

\begin{lemma}
\label{fixedpoints}
There is a functor 
$$\PL : \ecOGmod \lra \ecOGLmod$$
defined by 
$$(\PL M) (U(K /L))=\cEi_{K/L} \cOcFL \otimes_{\cOcFK}
\pK M, $$
or equivalently  
$$\phi^{K/L} (\PL M)=\pK M.$$
This functor takes quasi-coherent modules to quasi-coherent modules.
\qqed
\end{lemma}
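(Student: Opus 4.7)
The plan is largely bookkeeping: define $\PL M$ through its associated inflation system and inherited basing maps, verify the extended $\cO_{G/L}$-module axioms, and then check that quasi-coherence descends.

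The first task is to set up the combinatorial dictionary between $G$- and $G/L$-data for connected subgroups containing $L$. A connected subgroup of $G/L$ is precisely $K/L$ for a connected $K \supseteq L$ in $G$; subgroups $\tK \subseteq G$ with identity component $K$ correspond bijectively to $\tK/L \subseteq G/L$ with identity component $K/L$, and $(G/L)/(\tK/L) \cong G/\tK$. Hence $\cO_{\cF_{G/L}/(K/L)} = \cOcFK$ with matching inflation maps for chains $L \subseteq L' \subseteq K$ of connected subgroups, and since the representation theory of $(G/L)/(L'/L)$ is that of $G/L'$ the Euler-class subsets $\cE_{(K/L)/(L'/L)}$ and $\cE_{K/L'}$ coincide.

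Next I would construct the functor. Declare $\phi^{K/L}(\PL M) := \pK M$ with its natural $\cOcFK$-module structure; this produces an inflation system over the structure inflation functor of $G/L$. For each pair $L \subseteq L' \subseteq K$ with $L', K$ connected the extended-module axiom for $M$ supplies a basing map
$$\phi^{L'} M \lra \cEi_{K/L'} \cO_{\cF/L'} \otimes_{\cOcFK} \pK M,$$
and under the dictionary above this is exactly the required basing map for $\PL M$ between $U(L'/L)$ and $U(K/L)$. Setting
$$\PL M(U(K/L)) := \cEi_{K/L} \cOcFL \otimes_{\cOcFK} \pK M$$
and defining the restriction maps by extension of scalars along the basing maps yields a sheaf of $\cO_{G/L}$-modules of the desired extended form; the associativity for longer chains is inherited from $M$. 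Functoriality of $\PL$ is automatic since a morphism of extended modules is by definition a morphism of inflation systems.

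The substantive step is that quasi-coherence descends. Assume $M$ is quasi-coherent and fix connected $K \supseteq L$. Any $G$-representation $V$ with $V^K = 0$ splits as $V = V_1 \oplus V_2$, with $V_1$ collecting characters non-trivial on $L$ (so $V_1^L = 0$) and $V_2$ collecting characters trivial on $L$ (so $V_2$ is inflated from $G/L$ and $V_2^{K/L} = 0$). Thus $e(V) = e(V_1) e(V_2)$ with $e(V_1) \in \cE_L$ and $e(V_2)$ the pullback of an element of $\cE_{K/L}$, so $\cE_K$ is multiplicatively generated by $\cE_L$ together with the pullback of $\cE_{K/L}$. Since $\cE_L$ is already inverted in $M(U(L))$, quasi-coherence forces the restriction $M(U(L)) \to M(U(K))$ to be precisely the $\cE_{K/L}$-localization. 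Translating through the extended decompositions and using that $\cOcFL$ is a direct summand of $\cEi_L \cOcF$ (via the projection onto the factors indexed by subgroups with identity component exactly $L$, at which the Euler classes in $\cE_L$ act as units), this sheaf-level localization descends to the statement that the basing map
$$\phi^L M \lra \cEi_{K/L} \cOcFL \otimes_{\cOcFK} \pK M$$
is $\cE_{K/L}$-localization. But this is precisely the restriction map $\PL M(U(1_{G/L})) \to \PL M(U(K/L))$, so $\PL M$ is quasi-coherent.

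The main obstacle is the passage from sheaf-level to $\phi$-level localization at the end. The cleanest route is to prove once and for all that, for extended $\cO_G$-modules, quasi-coherence is \emph{equivalent} to the assertion that every basing map $\phi^L M \to \cEi_{K/L} \cOcFL \otimes_{\cOcFK} \pK M$ is a $\cE_{K/L}$-localization; once that equivalent characterization is in hand, the quasi-coherence of $\PL M$ falls straight out of the defining formula.
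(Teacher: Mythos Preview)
The paper supplies no proof for this lemma --- the statement ends with \qqed\ --- so you are providing considerably more detail than the author. Your bookkeeping for the extended structure (the dictionary between connected subgroups of $G$ containing $L$ and those of $G/L$, the identification $\cO_{\cF_{G/L}/(K/L)}=\cOcFK$, and the transport of basing maps) is exactly what the paper has in mind, and is correct.

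You have also put your finger on the only substantive point: preservation of quasi-coherence amounts to showing that each basing map
$\phi^L M \to \cEi_{K/L}\cOcFL\otimes_{\cOcFK}\phi^K M$
is itself an $\cE_{K/L}$-localization, and your factorisation $\cE_K=\cE_L\cdot\cE_{K/L}$ correctly reduces this to showing that $\cEi_L\cOcF\otimes_{\cOcFL}(-)$ reflects isomorphisms on the modules in play. Your parenthetical justification, however, is not right as stated: the factors of $\cOcF$ (and of $\cEi_L\cOcF$) are indexed by \emph{finite} subgroups of $G$, none of which has identity component $L$ when $L\neq 1$, so there is no ``projection onto those factors''. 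What you actually need is faithful flatness of $\cOcFL\to\cEi_L\cOcF$, and the paper proves exactly this later: Lemma~\ref{ffHausdorff} gives it on Hausdorff modules by an elementary argument (each $e(\alpha)$ with $\alpha^L=0$ acts as a split $\cOcFL$-monomorphism on $\cOcF$), and Corollary~\ref{faithfulflatness} records the consequence for the modules $\phi^K M$.

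There is mild logical entanglement --- the intervening Hausdorff lemma invokes $\Phi^K$ on qce modules in its inductive step --- but the core of~\ref{ffHausdorff} is independent of the present lemma, so a joint induction on the rank of $G$ (establishing fixed-point preservation of quasi-coherence and Hausdorffness of the $\phi^K M$ simultaneously) resolves it. Your overall plan is sound; just replace the direct-summand remark with an appeal to faithful flatness.
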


\begin{example}
The fixed point functor takes the $G$-structure sheaf $\cO$ to 
the $G/L$-structure sheaf: there is an equivalence 
$$\PL \cO_G = \cO_{G/L} $$
of sheaves of modules on the toral chain category for $G/L$. \qqed
\end{example}

\section{A filtration of the standard abelian category $\protect \cA (G)$.}
\label{sec:Filtration}

In this section we show that any object of the abelian category $\cA (G)$ 
can be built up from objects $f_H(N)$ arising from modules $N$ over the
rings $\cOcFH$ for various connected subgroups $H$. 
The object $f_H(N)$ is zero on $U(K)$ unless $K \subseteq H$, and
it is  constant where it is non-zero. 

The key to decomposing objects in this way 
is  the fact that all restriction maps $M(U(L)) \lra M(U(K))$ 
go in one direction: they even increase
the dimension of the subgroups.  The topological explanation of this 
phenomenon is just as in \cite{ratmack}. In equivariant stable homotopy
one expects to have to deal with restriction maps (decreasing the 
size of subgroups) and transfers (increasing the size of subgroups).
The restriction maps are built into the structure at an early stage.
Because we work over the rationals, the Burnside rings of finite groups
are products of copies of $\Q$, so that transfer maps 
for inclusions of finite index can be expressed entirely in terms of idempotents
from Burnside rings, so no maps increasing the size of subgroups are necessary.
The transfer maps for toral inclusions are zero, 
and the residual structure comes from the localization theorem.

This filtration is fundamental for calculation, and perhaps the 
first striking consequence is that the category $\cA (G)$ has 
finite injective  dimension. This is
the key to the power  of $\cA (G)$ in the study of 
$G$-equivariant cohomology theories.

Subsection \ref{subsec:evalext} introduces the method for constructing
objects of $\cA (G) $ from modules, Subsection \ref{subsec:constants}
shows how arbitrary objects can be constructed from these, and
Subsection \ref{subsec:HomlAlgebra} deduces  consequences  for homological
algebra. In \cite[8.1]{tnq2} we take this further to show the exact
injective dimension is the rank of $G$.

\subsection{Evaluation and extension.}
\label{subsec:evalext}

For a chosen connected subgroup $K$, evaluation gives a functor
$$\ev_K : \cOmod \lra \cO (U(K))\modules$$
defined by 
$$M \longmapsto M(U(K)). $$
This functor has a    right adjoint
$$c_K : \cO (U(K))\modules \lra \cOmod$$
given by taking the sheaf constant below $K$: 
$$c_K(N)(U(H))=
\dichotomy
{N & \mbox{ if } H \subseteq K}
{0 & \mbox{ if } H \not \subseteq K}$$
The unit of the adjunction  
$$\eta : M \lra c_K \ev_K M$$
is defined to be the restriction $\eta (U(L)):M(U(L)) \lra M(U(K))$ 
if $L \subseteq K$ and
is zero otherwise. The counit
$$\eps : ev_K c_K N \lra N$$ 
is the identity. Thus we have an adjunction 
$$\adjunction{{\ev_K}}{\cOmod}{\cO (U(K))\modules}{{c_K}} $$
with the  left adjoint on top. 

This adjunction obviously restricts to an adjunction between extended 
$\cO$-modules and extended $\cO (U(K))$-modules, and if we identify extended 
$\cO (U(K))$-modules
with modules for $\cOcFK$, this gives the adjunction 
$$\adjunction{{\pK}}{\ecOmod}{\cOcFK\modules}{{f_K}}. $$
Explicitly, $f_K(V)$ is constant below $K$ 
at $\cEi_K\cOcF \tensor_{\cOcFK}V$.
In other words, 
$$f_K(V)=c_K(\cEi_K\cOcF \tensor_{\cOcFK}V).$$

A little more care is necessary for quasi-coherent sheaves. Indeed, the 
$U$-sheaf $c_K(N)$ will not be quasi-coherent unless (i) $\cE_K$ is invertible 
on $N$ and (ii) $\cEi_{K'}N=0$ when $K'\not \subseteq K$. We call $\cOcF$-modules
satisifying (i) {\em $\cE_K$-invertible modules}, and those satisfying 
(ii) {\em $K$-torsion modules}. We call
sheaves with $M(U(K'))=0 $ when $K' \not \subseteq K$, {\em sheaves concentrated
below $K$}. Since quasi-coherent sheaves form a full subcategory,
this is the only obstacle,  and we have an adjunction
$$\adjunction
{{\ev_K}}
{\qccOmod\mbox{-below-$K$}}
{\mbox{$\cE_K$-inv-$K$-torsion-$\cOcF\modules$}}
{{c_K}}. $$

Finally, on qce $\cO$-modules we combine these to give the  adjunction
we actually need.

\begin{lemma}
For any connected subgroup $K$ there is an adjunction
$$\adjunction{{\pK}}{\qcecOmod\mbox{-below-$K$}}
{\mbox{torsion-$\cOcFK$}\modules}{{f_K}}. $$
Furthermore, for any torsion $\cOcFK$-module $V$ and an arbitrary 
extended module $M$, 
$$\Hom_{\cOcFK} (\pK M, V)=\Hom (M,f_K(V)).\qqed$$
\end{lemma}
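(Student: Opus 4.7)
The plan is to derive this adjunction by restricting the earlier adjunction $\pK \colon \ecOmod \rightleftarrows \cOcFK\modules \colon f_K$ to appropriate full subcategories on each side. Once the subcategory restriction is verified, the ``Furthermore'' statement is essentially the unrestricted adjunction isomorphism applied to an arbitrary extended $M$ and a torsion $\cOcFK$-module $V$: the identification $\Hom(M, f_K(V)) \cong \Hom_{\cOcFK}(\pK M, V)$ holds in the ambient category $\ecOmod$, and since $f_K(V)$ will lie in the full subcategory $\qcecOmod$-below-$K$, this also computes the Hom there.

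The first restriction to verify is that $\pK$ sends $\qcecOmod$-below-$K$ to torsion $\cOcFK$-modules. If $M$ is quasi-coherent and concentrated below $K$, then for a connected subgroup $K' \supsetneq K$ the extended structure of $M(U(K'))$ can be re-expressed in terms of $\pK M$ using transitivity of tensor products along $\cOcFK \to \cOcFK'$; the hypothesis $M(U(K')) = 0$ then translates directly into $\cEi_{K'/K}\pK M = 0$, which is the torsion condition.

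The second restriction to verify is that $f_K$ lands in $\qcecOmod$-below-$K$ whenever $V$ is a torsion $\cOcFK$-module. Concentration below $K$ is built into the definition $f_K(V) = c_K(\cEi_K\cOcF \otimes_{\cOcFK} V)$, so only quasi-coherence requires argument. For a connected $H \subseteq K$ this is automatic because $\cE_H \subseteq \cE_K$: if $W$ is a representation with $W^H = 0$, then $W^K \subseteq W^H = 0$, so $\cE_H$ is already invertible in $\cEi_K\cOcF$ and the $\cE_H$-localization of $f_K(V)(U(1))$ reproduces $f_K(V)(U(H))$. For $H \not\subseteq K$ the value $f_K(V)(U(H))$ is zero by definition, and one must verify the corresponding $\cE_H$-localization of $\cEi_K\cOcF \otimes_{\cOcFK} V$ also vanishes. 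This is where the torsion hypothesis on $V$ enters: some Euler class in $\cE_H$, after tracking through the inflation $\cOcFK \to \cOcF$, acts as a localization of $V$ by a non-trivial Euler class of a representation of $G/K$, forcing the tensor product to vanish.

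The principal obstacle will be the last step, namely the bookkeeping correspondence between the Euler classes $\cE_H \subset \cOcF$ for $H \not\subseteq K$ and the Euler classes in $\cOcFK$ coming from non-trivial representations of $G/K$. Concretely, one must verify that for every such $H$ there is a representation $\alpha$ with $\alpha^H = 0$ whose Euler class, under inflation, becomes the Euler class of a representation of $G/K$ that is non-trivial on some connected subgroup strictly containing $K$. Once this correspondence is made precise, the adjunction isomorphism follows immediately by restriction of the previously constructed $(\pK, f_K)$ adjunction.
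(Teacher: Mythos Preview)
Your proposal is correct and follows essentially the same route as the paper, which leaves the proof implicit (the statement carries a \verb|\qqed|) after setting up the two ingredient adjunctions $\phi^K \dashv f_K$ on extended modules and $\ev_K \dashv c_K$ on quasi-coherent modules below $K$. You have correctly identified that the work is in checking the two restrictions land in the right subcategories, and your argument for each is sound.

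One minor comment on your description of the ``principal obstacle'': the direction of the correspondence is cleaner the other way. Rather than starting in $\cE_H$ and asking what comes from $G/K$, take $K' = HK$ (connected, strictly containing $K$ since $H \not\subseteq K$) and a representation $\beta$ of $G/K$ with $\beta^{K'/K}=0$. Viewed as a representation of $G$ trivial on $K$, one has $\beta^H = \beta^{HK} = 0$, so $e(\beta) \in \cE_H$; and since $e(\beta)$ is the inflation of the corresponding Euler class in $\cOcFK$, inverting it on $\cEi_K\cOcF \otimes_{\cOcFK} V$ inverts $\cE_{K'/K}$ on $V$. The torsion hypothesis then kills the tensor product, and further localization by the rest of $\cE_H$ keeps it zero. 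This is exactly the verification behind the paper's earlier remark that $c_K(N)$ is quasi-coherent precisely when $N$ is $\cE_K$-invertible and $K$-torsion, specialized to $N = \cEi_K\cOcF \otimes_{\cOcFK} V$.
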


\subsection{Finiteness of quasi-coherent sheaves.}

If $M$ is a sheaf of $\cO$-modules, then $M(U(K))$ is a module over the
ring $\cOcFK=\prod_{\tK} H^*(BG/\tK)$. This is not a particularly 
easy ring to work with, and in particular it is not Noetherian. 
However the quasi-coherence condition on modules means that the 
relevant module theory is much better behaved. This good behaviour
can mostly be traced back to the crucial lemmas proved in this section.

First we should formalize some terminology for $\cO$-modules, since
there are several notions that could loosely be called `support', 
but which should not be confused with the usual use of the word
in commutative algebra.

\begin{defn}
(i) If $M$ is a $U$-sheaf, 
$$\supp (M):= \{ K \in \connsub (G)\st M(U(K))\neq 0\}.$$ 

(ii) If $M$ is a $U$-sheaf, and $x \in M(U(L))$, 
$$\supp_L (x):= \{ K \in \connsub (G/L)\st x \mbox{ has non-zero image in } M(U(K))\}.$$ 

(iii) If $N$ is a module over $\cOcFK$ and $x \in N$, we say that 
the {\em spread} of $x$ is 
$$\spr(x):=\{ \tK \in \cF/K \st e_{\tK}x\neq 0\}.$$
\end{defn}

We have finiteness conditions on the support and on the spread of elements.
That on support is more elementary.

\begin{lemma}
\label{torsionsum}
Suppose $M$ is  a qce-sheaf,  $L$ is a connected subgroup of $G$
and  $x \in M(U(L))$. 
If  $\supp_L(x)$ consists of subgroups of dimension $\leq s$ then 
it contains only finitely many subgroups of dimension $s$.
\end{lemma}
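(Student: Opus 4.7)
I will argue by contradiction, supposing $\supp_L(x)$ contains infinitely many distinct connected subgroups $K_1, K_2, \ldots$ of dimension $s$ (each automatically containing $L$). The key structural principle I exploit is that, by quasi-coherence, $M(U(K)) = \cEi_K M(U(L))$ for any connected $K \supseteq L$, so $K \notin \supp_L(x)$ exactly when there is a representation $V$ with $V^K = 0$ and $e(V) \cdot x = 0$ in $M(U(L))$. Contrapositively, whenever $K_i \in \supp_L(x)$ and $e(V) x = 0$, we must have $V^{K_i} \neq 0$, so some character $\alpha$ of $V$ is trivial on $K_i$, forcing $K_i \subseteq (\ker \alpha)_0$ by connectedness of $K_i$.

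The plan is then to iteratively shrink a codimension-growing chain of connected subgroups containing $L$. First, if $\dim G > s$ (otherwise the claim is trivial), I apply the principle with $K = G$ to find $V_0$ with $V_0^G = 0$ and $e(V_0) x = 0$. Its finitely many characters $\alpha_1, \ldots, \alpha_n$ are nontrivial on $G$, and each $K_i \in \supp_L(x)$ lies in some $(\ker \alpha_j)_0$, a codimension-one connected subgroup; pigeonhole produces $H^{(1)}$ of codimension one with infinitely many $K_i$ inside it. Inductively, given $L \subseteq H^{(k)}$ of codimension $k$ containing infinitely many $K_i$, I pick two distinct ones $K_{i_1}, K_{i_2}$ from this set. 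Their product $K_{i_1} K_{i_2}$ is connected (as $G$ is abelian), contained in $H^{(k)}$, contains $L$, and has dimension strictly greater than $s$ since $\dim (K_{i_1} \cap K_{i_2})_0 < s$. Hence $K_{i_1} K_{i_2} \notin \supp_L(x)$, yielding $V_k$ with $V_k^{K_{i_1} K_{i_2}} = 0$ and $e(V_k) x = 0$; every character $\beta$ of $V_k$ is nontrivial on $K_{i_1} K_{i_2} \subseteq H^{(k)}$, and therefore nontrivial on $H^{(k)}$. Each remaining $K_i$ in the infinite set must lie in $H^{(k)} \cap (\ker \beta)_0$ for one of the finitely many characters $\beta$, and since $\beta|_{H^{(k)}} \neq 1$ this intersection has codimension exactly one in $H^{(k)}$; a second pigeonhole then produces $H^{(k+1)} \subsetneq H^{(k)}$ of codimension $k+1$ still containing infinitely many $K_i$.

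After $r - s$ steps, the subgroup $H^{(r-s)}$ has dimension exactly $s$; any $K_i \subseteq H^{(r-s)}$ connected of the same dimension $s$ must then coincide with $H^{(r-s)}$, contradicting the distinctness of the infinitely many $K_i$ surviving to this stage. The one delicate point in this scheme is ensuring that the dimension strictly decreases at each step, which reduces to checking that the characters of $V_k$ restrict nontrivially to $H^{(k)}$; this is precisely what the containment $K_{i_1} K_{i_2} \subseteq H^{(k)}$ together with $V_k^{K_{i_1} K_{i_2}} = 0$ supplies.
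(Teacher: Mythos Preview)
Your argument is correct and follows essentially the same descent-through-codimension strategy as the paper: use that any connected subgroup of dimension $>s$ lies outside $\supp_L(x)$ to produce an annihilating Euler class, then trap the $K_i$ inside the kernels of its finitely many characters. One small simplification you might note: in the inductive step you can use $H^{(k)}$ itself as the witness outside the support (since $\dim H^{(k)}=r-k>s$), exactly as the paper does, which makes the product $K_{i_1}K_{i_2}$ unnecessary; also, the connected subgroup you want at each stage is $(H^{(k)}\cap\ker\beta)_0$ rather than $H^{(k)}\cap(\ker\beta)_0$, though this is only a notational wrinkle.
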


\begin{proof}
All subgroups considered in this proof are assumed connected and to contain $L$.
The hypothesis states that for each subgroup $H$ of dimension more than $s$
there is a representation $V(H)$ with $V(H)^H=0$, and $e (V(H))x=0$. 
It follows that $x$ maps to zero in $M(U(K))$ whenever $V(H)^K=0$. 
Thus if $V(H)=\alpha_1(H) \oplus \cdots \oplus
\alpha_{n(H)}(H)$, we see that $K$ can only be in the support of $x$ if
$K \subseteq \ker (\alpha_i(H))$ for some $i$. 

For $r=s$ there is nothing to prove, so we suppose $s<r$.
We now argue by induction on the codimension $c$, that if $K \in \supp_L(x)$
there are only finitely many subgroups $H$ of codimension $c$ that contain 
$K$. The result for $c=r-s$ is the statement of the lemma.
For $c=0$ we see that $K$ must be contained in $\ker (\alpha_i(G))$ for some
$i$. Now suppose the result is proved in codimension $c<r-s$, and pick one 
of the finitely many subgroups $H$ of codimension $c$. Subgroups in the
 support inside $H$ must lie in one of the subgroups $\ker (\alpha_i(H))$, 
 and hence in one of the finitely many codimension $c+1$ 
subgroups $H \cap \ker (\alpha_i(H))$. This completes the inductive step.
\end{proof}

The proof of the result on spread uses  a result from Section \ref{subsec:HomlAlgebra}, 
but the result is stated here for ease of reference.
\begin{lemma}
\label{finitespread}
Suppose $M$ is  a qce-sheaf, $L$ is a connected subgroup of $G$
of dimension $s$ and  $x \in \phi^LM$.
If $1 \tensor x \in \ker (M(U(L))\lra M(U(K)))$ for all connected subgroups $K$ containing
$H$ of  dimension $s+1$  then $x$ has finite spread.
\end{lemma}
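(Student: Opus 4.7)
The plan is to prove the contrapositive: assuming $\spr(x)$ is infinite, I produce a connected subgroup $K$ of dimension $s+1$ containing $L$ such that $1\otimes x$ survives in $M(U(K))$. By quasi-coherence $M(U(K))=\cE_K^{-1}M(U(L))$, and a key preliminary is that $\cE_K$ factors multiplicatively as $\cE_L\cdot\cE_{K/L}$: any representation $V$ of $G$ with $V^K=0$ splits as $V^L\oplus(V/V^L)$, whose Euler classes lie respectively in the pullback of $\cE_{K/L}\subseteq\cOcFL$ and in $\cE_L$. Hence a putative annihilator of $1\otimes x$ may be taken to be $e(W_K)$ for some $G/L$-representation $W_K$ with $W_K^{K/L}=0$; such an $e(W_K)$ lies in $\cOcFL$ and acts componentwise on $\phi^L M$ with
\[
e(W_K)(\tL) \;=\; \prod_{\alpha \in W_K,\; \alpha|_{\tL/L}=1} c_1(\alpha) \;\in\; H^*(BG/\tL).
\]
The result from Subsection \ref{subsec:HomlAlgebra} is invoked to transfer the tensor-product vanishing $e(W_K)\otimes x=0$ in $M(U(L))=\cE_L^{-1}\cOcF\otimes_{\cOcFL}\phi^L M$ into componentwise vanishing $e(W_K)(\tL)\cdot x_{\tL}=0$ in the polynomial domain $H^*(BG/\tL)$. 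Since each $c_1(\alpha)$ is nonzero there, for each $\tL\in\spr(x)$ at least one character summand $\alpha$ of $W_K$ must be trivial on $\tL/L$. In summary, the hypothesis supplies, for each connected $K\supseteq L$ of dimension $s+1$, a finite collection of characters of $G/L$, each nontrivial on $K/L$, whose kernels collectively contain $\{\tL/L:\tL\in\spr(x)\}$.

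Supposing $\spr(x)$ is infinite, I would iteratively construct linearly independent characters $\alpha_1,\dots,\alpha_{r-s}$ of $G/L$ together with infinite subfamilies $\spr(x)=S_0\supseteq S_1\supseteq\cdots\supseteq S_{r-s}$ as follows. At step $j$, having chosen $\alpha_1,\dots,\alpha_{j-1}$ with each $\tL\in S_{j-1}$ satisfying $\tL/L\subseteq\ker\alpha_i$ for all $i<j$, pick any circle $K_j/L$ inside the identity component of $\bigcap_{i<j}\ker\alpha_i$; this is a subtorus of dimension $r-s-j+1\geq 1$ whenever $j\leq r-s$. The hypothesis at $K_j$ yields a finite cover of $S_{j-1}$ by kernels of characters nontrivial on $K_j/L$, and the pigeonhole principle extracts one such character $\alpha_j$ whose kernel contains an infinite $S_j\subseteq S_{j-1}$. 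Since $\alpha_j$ is nontrivial on a subtorus of the common kernel of $\alpha_1,\dots,\alpha_{j-1}$, it lies outside the annihilator of that subtorus in the character lattice of $G/L$ and is therefore linearly independent from the previous $\alpha_i$.

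After $r-s$ such steps the characters $\alpha_1,\dots,\alpha_{r-s}$ span a finite-index sublattice of the rank-$(r-s)$ character lattice of $G/L$, so their common kernel $\bigcap_{i}\ker\alpha_i\subseteq G/L$ is a finite subgroup. Every $\tL\in S_{r-s}$ has $\tL/L$ contained in this finite subgroup; but distinct $\tL\in\cF/L$ correspond to distinct subgroups $\tL/L$ of $G/L$, so $S_{r-s}$ injects into the (finite) set of subgroups of $\bigcap_i\ker\alpha_i$, contradicting its infinitude. The main obstacle throughout is the first paragraph's transfer from tensor-product vanishing in $M(U(L))$ to componentwise vanishing in $\phi^L M$, which rests on flatness of $\cE_L^{-1}\cOcF$ over $\cOcFL$ and injectivity properties of the basing map $\phi^L M\to M(U(L))$; this is precisely what the cited result from Subsection \ref{subsec:HomlAlgebra} is designed to provide.
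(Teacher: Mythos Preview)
Your proof is correct and follows essentially the same strategy as the paper's: both invoke the faithful flatness result from Subsection~\ref{subsec:HomlAlgebra} to pass from the tensor-product condition in $M(U(L))$ to an Euler-class annihilation in $\phi^L M$, then observe that $e(W_K)(\tL)=1$ unless some character summand is trivial on $\tL/L$, and iterate by choosing circles inside successive intersections of kernels. The only difference is organizational: the paper reduces outright to $L=1$ and runs a direct covering induction showing $\spr(x)\subseteq\bigcup_j\cF H_j^{(i)}$ with $\mathrm{codim}\,H_j^{(i)}\geq i$, whereas you argue by contrapositive and use pigeonhole to extract a single nested chain of independent characters---a pleasant variant of the same dimension-reduction idea.
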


\begin{proof}
Since $\cEi_L\cOcF$ is flat over $\cOcFL$ by \ref{faithfulflatness}, it suffices to consider
the special case $L=1$.
The hypothesis states that for each $1$-dimensional 
connected subgroup $K$, there is a representation $V(K)$
with $V(K)^K=0$, and $e (V(K))x=0$. 

Now for each finite subgroup $F$ the Euler class
$e(\alpha)(F)$ is 1 if $\alpha^F=0$ and is $c(\alpha)\in H^2(BG/F)$ if 
$\alpha$ is a representation of $G/F$. Thus
 if $V(K)=\alpha_1(K)\oplus \cdots \oplus \alpha_{n(K)}(K)$, 
the $F$-coordinate of $e(V(K))$ is 1 unless $F \subseteq
\ker (\alpha_i)$ for some $i$. Hence, writing $\cF M$ for the set of finite
subgroups of $M$,  
$$\spr (x) \subseteq \cF \ker (\alpha_1(K)) \cup 
 \cF \ker (\alpha_2(K)) \cup \cdots \cup
 \cF \ker (\alpha_n(K)). $$
More generally, we argue by induction 
that for $i=1,2, 3, \ldots , r$
$$\spr (x)\subseteq \cF H^{i}_1 \cup \cF H^{i}_2\cup\cdots \cup \cF H^{i}_{m(i)}$$
for certain subgroups $H^i_j$ of codimension at least $i$. We have already dealt 
with the case $i=1$, and the case $i=r$ shows there are a finite number of subgroups
in $\spr (x)$. For the inductive step we suppose $i <r$. If all the subgroups
already have codimension at least $i+1$ we can take $H^{i+1}_j=H^i_j$. Otherwise, 
for each $j$ with $H^i_j$ infinite we can find $K_j \subseteq H^i_j$. The inductive
hypothesis together with the spread condition for the $K_j$ shows that we may take
the $H^{i+1}_k$ to be intersections of $H^i_j $ and the $\ker (\alpha_s(K_j))$;
since $K_j \not \subseteq \ker (\alpha_s(K_j))$, these subgroups are of 
codimension $\geq i+1$. This completes the inductive step. 
\end{proof}

\subsection{$U$-sheaves are constructed from constant ones.}
\label{subsec:constants}

The category of qce $U$-sheaves is an abelian category, and we 
will need to do homological algebra in it. 
We use the modules constant below some point to  import 
 convenient objects into the  category 
of $\cO$-modules from categories of modules over suitable rings. 
These objects suffice to build everything, and the method
for proof below gives a practical method of calculation. 



\begin{thm}
\label{Usheavesconstructible}
The  qce $U$-sheaves constant below connected subgroups 
(i.e., the sheaves of the form $f_K(V)$ for some connected subgroup 
$K$ and some torsion $\cOcFK$-module $V$) generate the category 
of all qce $U$-sheaves using short exact sequences and sums.
\end{thm}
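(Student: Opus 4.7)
The plan is to induct on the maximum dimension $d$ of a connected subgroup in $\supp(M)$. The base case $d \leq 0$ is direct: $M$ is then concentrated at the trivial subgroup, so extendedness together with the faithful flatness of $\cEi_K\cOcF$ over $\cOcFK$ (Lemma \ref{faithfulflatness}) forces $\phi^K M = 0$ for every nontrivial connected $K$, and one reads off $M = f_1(\phi^1 M)$ with $\phi^1 M = M(U(1))$, which is torsion since $\cEi_K M(U(1)) = M(U(K)) = 0$ for every nontrivial connected $K$.

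For the inductive step, fix $M$ with maximum support dimension $d \geq 1$ and let $\cS_d$ denote the set of connected subgroups of $G$ of dimension exactly $d$. Define the subsheaf $M_{<d} \subseteq M$ by
$$M_{<d}(U(K)) \;=\; \ker\Bigl(M(U(K)) \longrightarrow \prod_{H \in \cS_d,\, H \supseteq K} M(U(H))\Bigr),$$
so that $M_{<d}$ has maximum support dimension strictly less than $d$ and hence lies in the generating subcategory by the inductive hypothesis. For each $H \in \cS_d$ the module $\phi^H M$ is torsion as an $\cOcFH$-module: no connected $H' \supsetneq H$ lies in $\supp(M)$, and the vanishing $M(U(H')) = \cEi_{H'/H}\,\cEi_H\cOcF \otimes_{\cOcFH} \phi^H M = 0$ forces $\cEi_{H'/H}\phi^H M = 0$ via faithful flatness. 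Thus each $f_H(\phi^H M)$ is defined and lies in the generating family.

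By Lemma \ref{torsionsum}, for any $x \in M(U(K))$ the set of $H \in \cS_d$ containing $K$ for which $x$ has nonzero image in $M(U(H))$ is finite, so the product of the unit maps of the adjunctions $\phi^H \dashv f_H$ factors through the direct sum and descends to an injection
$$\iota\colon M/M_{<d} \;\hookrightarrow\; \bigoplus_{H \in \cS_d} f_H(\phi^H M).$$
The cokernel $C$ of $\iota$ has maximum support strictly less than $d$: when $\dim K = d$ the only contributing summand has $H = K$ and extendedness gives the isomorphism $M(U(K)) \iso \cEi_K\cOcF \otimes_{\cOcFK} \phi^K M$, while when $\dim K > d$ both sides vanish. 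So $C$ lies in the generating subcategory by induction, and the two short exact sequences
$$0 \to M/M_{<d} \to \bigoplus_{H \in \cS_d} f_H(\phi^H M) \to C \to 0 \quad\text{and}\quad 0 \to M_{<d} \to M \to M/M_{<d} \to 0,$$
combined with the two-out-of-three closure of the generating subcategory under short exact sequences, place $M$ in that subcategory.

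The crux of the argument, and the only essential use of the finiteness Lemma \ref{torsionsum}, is the factorization of $\iota$ through the direct sum rather than the (potentially uncountable) product $\prod_H f_H(\phi^H M)$; without this finiteness the cokernel would still have top-dimensional support and the induction would collapse.
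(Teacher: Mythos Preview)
Your argument is correct and essentially identical to the paper's: both induct on the maximum dimension of support, form the unit maps $M \to f_H(\phi^H M)$ for the top-dimensional connected subgroups $H$, invoke Lemma~\ref{torsionsum} to factor through the direct sum rather than the product, and finish by noting that the kernel and cokernel have strictly smaller support dimension. Your explicit separation into the two short exact sequences and your invocation of faithful flatness to check that $\phi^H M$ is torsion make the same steps slightly more precise, but the strategy is the same.
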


\begin{proof}
We say that a $U$-sheaf is supported on a set of subgroups $\cK$ if 
$M(U(K'))=0$ when $K' \not \in \cK$. We argue by finite induction on $s$ that 
qce sheaves supported on subgroups of dimension $\leq s$ are generated
by $U$-sheaves constant below some point. The induction begins since 
the statement is obvious with $s=-1$, and the theorem is the case $s=r$. 

Suppose then that qce $U$-sheaves supported on subgroups of dimension 
$\leq s-1$ are generated by $U$-sheaves constant below some point, and
that $M $ is a qce $U$-sheaf supported on subgroups of dimension $\leq s$. 
For each connected subgroup $L$ of dimension $s$ we  note as in Remark
\ref{rmk:qc}(iii) that $M(U(L))$ 
is torsion and lift the identity map  $M(U(L)) $ to a map 
$M \lra f_L(M(U(L)))$. Now   combine these to a map 
$$M \lra \prod_{\dim (L)=s} f_L(M(U(L))) . $$
The product is the termwise product of vector spaces, and therefore
not a qce sheaf. 

The sheaf $M$ is supported in dimension $\leq s$ so the image of the 
map satisfies the hypotheses of \ref{torsionsum}, and the map into the product 
actually maps into the sum.  We thus obtain 
$$g:M \lra \bigoplus_{\dim (L)=s} f_L(M(U(L))) .$$
The first point is that the sum is a qce $U$-sheaf since
localization and tensor products commute with direct sum.

Next the  map $g$ is an isomorphism at $U(H)$ whenever $\dim (H) \geq s$.
The kernel and cokernel are then supported on subgroups of dimension $\leq s-1$ and
hence constructed from constant sheaves by induction. 
\end{proof}

\begin{remark}
If we view a qce-sheaf $M$ as a refinement of the module $M(U(1))$, the
construction described above is a refinement of a generalized Cousin
complex for $M(U(1))$. For example the first map 
$$\eta: M \lra f_G(M(U(G))$$
evaluated at $U(1)$ is 
$$\eta (U(1)): M(U(1)) \lra \cEi_GM(U(1)), $$
and the next step 
$$\cok (\eta) \lra \bigoplus_M f_M(\phi^M\cok (\eta)))$$
evaluated at $U(1)$ is 
$$\cok (\eta (U(1))) \lra \bigoplus_M \cEi_M \cok(\eta(U(1))).$$
At each stage the kernel is already understood by induction.
\end{remark}

\section{Homological algebra of categories of sheaves.}
\label{subsec:HomlAlgebra}

The fact that the category of qce-sheaves has finite injective 
dimension is fundamental for the convergence of our spectral sequence.
The fact that it is small makes the method of calculation practical. 
The idea is simple:  since the $f$ constructions 
are right adjoints to  evaluation we obtain a good supply of injectives, 
and since the cohomology rings $H^*(BG/K)$ are polynomial rings on 
at most $r$ generators the injective dimension is $r$. Implementing
this idea is a little more complicated because we need to work with 
non-Noetherian rings like $\cOcF$. 


\subsection{Sums of injectives.}

One of the convenient things about Noetherian rings is that 
arbitrary sums of injectives are injective. This is not true 
for the category of arbitrary modules over $\cOcF$. However 
the qce condition is sufficient to obtain the required property
for the categories relevant to us.

\begin{lemma}
\label{sumprod}
If $M$ is a qce-sheaf  and $N_L$ is an 
$\cOcFL$-module for each connected subgroup $L$ of dimension $s$
then 
\begin{multline*}
\Hom (M, \bigoplus_Lf_L(N_L))\stackrel{=}\lra \Hom (M, \prod_Lf_L(N_L))\\
=\prod_L\Hom (M, f_L(N_L))=\prod_L\Hom (M (U(L)), N_L).
\end{multline*}
The corresponding result holds if $L$ runs through all subgroups, and 
not just the connected ones. 
\end{lemma}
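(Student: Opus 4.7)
The plan is to verify the three equalities in turn. The middle identification $\Hom(M, \prod_L f_L(N_L)) = \prod_L \Hom(M, f_L(N_L))$ is the universal property of the categorical product in the ambient category of sheaves of $\cO$-modules. The right-hand identification $\Hom(M, f_L(N_L)) = \Hom(M(U(L)), N_L)$ is the adjunction $\phi^L \dashv f_L$ of the previous lemma, rewritten using the extended-structure formula $M(U(L)) = \cEi_L \cOcF \otimes_{\cOcFL} \phi^L M$.

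The substantive content is the first equality. The canonical monomorphism $\bigoplus_L f_L(N_L) \hookrightarrow \prod_L f_L(N_L)$ induces an injection on $\Hom(M,-)$ automatically, so surjectivity is what needs work: every morphism $\phi : M \to \prod_L f_L(N_L)$ must factor through the sum, equivalently, for each connected subgroup $K$ and each $x \in M(U(K))$ only finitely many $L$-components $y_L$ of $\phi(U(K))(x)$ can be nonzero. Since $f_L(N_L)$ vanishes on $U(K')$ whenever $K' \not\subseteq L$ and is constant on $U$-opens below $L$, only $L \supseteq K$ with $\dim L = s$ contribute, and for such $L$ the restriction map $f_L(N_L)(U(K)) \to f_L(N_L)(U(L))$ is the identity. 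Hence $y_L = \phi_L(U(L))(\mathrm{res}_{U(K)\to U(L)}(x))$, and nonvanishing of $y_L$ forces $L \in \supp_K(x)$, reducing the task to finiteness of $\supp_K(x) \cap \{L : \dim L = s\}$.

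The hard part is that Lemma \ref{torsionsum} only yields finiteness of the dimension-$s$ part of $\supp_K(x)$ under the hypothesis that the whole support lies in dimensions $\leq s$, which is not automatic for an arbitrary $x \in M(U(K))$. The way out is to exploit that the target $\prod_L f_L(N_L)$ is itself concentrated in dimension $\leq s$: for every connected $K' \supseteq K$ with $\dim K' > s$ the restriction of $\phi(x)$ to $U(K')$ vanishes, so some Euler class $e \in \cE_{K'}$ annihilates $\phi(x)$ componentwise. Writing $e$ as a product of character Euler classes $e(\alpha_i)$ with $\alpha_i^{K'} = 0$, invertibility in $\cEi_L\cOcF$ forces $y_L = 0$ unless $L \subseteq \ker(\alpha_i)$ for some $i$. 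Running the codimension induction of the proof of Lemma \ref{torsionsum} on the image $\phi(x)$ in place of $x$ itself then produces a finite collection of codimension-$(r{-}s)$ subgroups any of whose intersections covers $\{L : y_L \neq 0\}$, giving the desired finiteness.

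Finally, the extension of the statement from connected $L$ to arbitrary subgroups $L$ of dimension $s$ is obtained by reindexing: each such $L$ has a connected identity component $L_0$ and is cut out inside $L_0$ by an idempotent of $\cOcF{L_0}$, so the sum over non-connected $L$ of fixed identity component $L_0$ reassembles into a single module $\bigoplus_{L : L_0 \text{ fixed}} N_L$ over $\cOcF{L_0}$, and the previous argument applies verbatim.
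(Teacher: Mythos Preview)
Your structure matches the paper's and you correctly isolate the hard step: showing every $\phi: M \to \prod_L f_L(N_L)$ factors through the sum, with the obstacle that Lemma~\ref{torsionsum} needs $\supp_K(x)$ to lie in dimensions $\leq s$. But your workaround has a gap. The claim ``some Euler class $e \in \cE_{K'}$ annihilates $\phi(x)$ componentwise'' is unjustified: the product $\prod_L f_L(N_L)$ is not quasi-coherent, so the vanishing of the restriction to $U(K')$ (which is automatic, the product being zero there) does not yield a \emph{single} $e$ killing all components $y_L$. Each factor $f_L(N_L)$ is qc, so there is some $e_L \in \cE_{K'}$ with $e_L y_L = 0$, but $e_L$ depends on $L$, and with $L$-dependent characters $\alpha_i^{(L)}$ the codimension induction of Lemma~\ref{torsionsum} collapses (each $L$ lies in some $\ker(\alpha_i^{(L)})$, but these vary freely with $L$ and give no finiteness). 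The paper instead replaces $M$ by its image, a sheaf supported in dimension $\leq s$, and applies Lemma~\ref{torsionsum} to that directly.

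Your treatment of the non-connected case is also incomplete. Reindexing by identity component $L_0$ does reduce the sum side to the connected statement with $N_{L_0}' := \bigoplus_{\tL} N_{\tL}$, but unwinding the $\Hom$ side then requires
$\Hom_{\cO_{\cF/L_0}}\bigl(\phi^{L_0}M,\ \bigoplus_{\tL} N_{\tL}\bigr) \;=\; \prod_{\tL}\Hom_{\cO_{\cF/L_0}}\bigl(\phi^{L_0}M,\ N_{\tL}\bigr)$,
and there are infinitely many $\tL$ with a given identity component $L_0$. This is exactly the content of Lemma~\ref{finitespread}, which the paper invokes at this point; it does not follow from the connected case alone.
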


\begin{proof}
First consider the statement with $L$ running through connected subgroups.
Any map 
$$f:M \lra \bigoplus_L f_L(N_L).$$
is determined by its behaviour on the sets $U(L)$ because the map 
$$\bigoplus_L f_L(N_L)\lra \prod_L f_L(N_L)$$
of $U$-sheaves is a termwise monomorphism. 
 Now replace $M$ by its image, to obtain a sheaf supported in dimension 
$\leq s$. Applying \ref{torsionsum}, any map into the product factors through
the sum.

The same proof gives the conclusion for all subgroups, now quoting 
\ref{finitespread}. 
\end{proof}

This lets us identify a sufficient supply of injective modules.

\begin{cor}
\label{sumsofinjectives}
If $I_{\tL}$ is a torsion injective $H^*(BG/\tL)$-module for each subgroup
 $\tL$ with identity component $L$, 
then $\bigoplus_{\tL} f_L(I_{\tL})$
is an injective qce-sheaf. \qqed
\end{cor}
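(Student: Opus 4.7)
My plan is to prove the contravariant functor $\Hom_{\cA(G)}\bigl(-, \bigoplus_{\tL} f_L(I_{\tL})\bigr)$ is exact on qce-sheaves, which gives injectivity. The two key inputs are Lemma \ref{sumprod}, to convert the direct sum into a product, and the $(\pL, f_L)$ adjunction of Subsection \ref{subsec:evalext}, to trade $f_L$ on the right for $\pL$ on the left.

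First, I would partition the index set by dimension, noting that only the $r+1$ values $\dim \tL \in \{0,1,\dots,r\}$ occur. Applying the ``all subgroups'' form of Lemma \ref{sumprod} within each fixed dimension, together with the adjunction, and then combining the finitely many dimension-pieces yields
\[
\Hom_{\cA(G)}\Bigl(M, \bigoplus_{\tL} f_L(I_{\tL})\Bigr) \;\cong\; \prod_{\tL} \Hom_{\cOcFL}\bigl(\pL M, I_{\tL}\bigr),
\]
natural in the qce-sheaf $M$, where $I_{\tL}$ carries its $\cOcFL$-structure via the ring projection $\cOcFL \twoheadrightarrow H^*(BG/\tL)$. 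Since a product of exact functors is exact, it suffices to show each factor is exact in $M$.

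On the one hand, $\pL$ is exact on $\cA(G)$: evaluation at $U(L)$ is exact, and by extendedness $M(U(L)) = \cEi_L \cOcF \otimes_{\cOcFL} \pL M$, so the faithful flatness of $\cEi_L\cOcF$ over $\cOcFL$ supplied by \ref{faithfulflatness} lets exactness descend to $\pL$. On the other hand, $\Hom_{\cOcFL}(-, I_{\tL})$ is exact because $\cOcFL = \prod_{\tK \in \cF/L} H^*(BG/\tK)$ is a product ring and the $\cOcFL$-action on $I_{\tL}$ factors through the idempotent selecting the $\tL$-component; any extension problem thus splits off by this idempotent and is solved by the hypothesized injectivity of $I_{\tL}$ over the polynomial ring $H^*(BG/\tL)$.

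The main potential obstacle is establishing exactness of $\pL$, which rests on the faithful-flatness input \ref{faithfulflatness} already embedded in the paper; once that is accepted the remainder is formal, combining the adjunction, Lemma \ref{sumprod}, and the product-ring decomposition of $\cOcFL$.
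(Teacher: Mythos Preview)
Your argument is correct and follows the same route the paper has in mind: the corollary is marked with an immediate \qqed\ because it is meant to drop out of Lemma~\ref{sumprod} (in its ``all subgroups'' form) together with the $(\pL,f_L)$ adjunction, exactly as you write it. Your extra care in partitioning by dimension before invoking Lemma~\ref{sumprod}, and in justifying exactness of $\pL$ via the faithful flatness of Corollary~\ref{faithfulflatness}, are legitimate elaborations of points the paper leaves implicit; note that this last step is a forward reference in the paper's logical order (as is the use of \ref{faithfulflatness} inside \ref{finitespread}), but it is not circular.
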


\subsection{The injective dimension of $\cA (G)$.}

In fact the injective dimension of $\cA (G)$ is equal to the rank 
$r$ of $G$. This will be proved in \cite[8.1]{tnq2}. For the construction 
and convergence of the Adams spectral sequence it is enough to show the 
injective dimension is finite. In fact, the proof below shows that 
the injective dimension is $\leq 2r$. One expects the method to
establish the exact  bound, but this would involve delicate arguments
to justify the behaviour of the category of modules occurring as $M(U(1))$ 
for qce-sheaves $M$.

\begin{thm}
\label{injdimisrank}
The category of qce $\cO$-modules has injective dimension $\leq 2r$.
\end{thm}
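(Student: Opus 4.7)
The plan is to prove the stronger inductive claim: for $0\leq s\leq r$, every qce $\cO$-module $M$ whose support lies in dimension $\leq s$ (meaning $M(U(K))=0$ whenever $\dim K>s$) has injective dimension $\leq r+s$ in $\cA (G)$. The case $s=r$ yields the theorem, since every qce-sheaf is automatically supported in dimension $\leq r=\dim G$.

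Two preliminary facts do the bookkeeping. First, for each connected subgroup $L$ the functor $f_L$ is exact and preserves injectives. Exactness is visible from the explicit formula $f_L(V)(U(H))=\cEi_L\cOcF\otimes_{\cOcFL}V$ for $H\subseteq L$ (and $0$ otherwise) combined with faithful flatness of $\cEi_L\cOcF$ over $\cOcFL$ (the fact invoked already in the proof of Lemma \ref{finitespread}); preservation of injectives is the formal consequence that $f_L$ is right adjoint to the exact functor $\phi^L$. Second, since $\cOcFL=\prod_{\tL\in\cF /L}H^*(BG/\tL)$ is a product of graded polynomial rings in $r-\dim L$ generators, any torsion $\cOcFL$-module admits a torsion injective resolution of length $\leq r-\dim L$; applying $f_L$ termwise and using Corollary \ref{sumsofinjectives} to keep the direct sum over $\cF /L$ injective in $\cA (G)$, we conclude that any object of the form $f_L(V)$ with $V$ a torsion $\cOcFL$-module has injective dimension $\leq r-\dim L$ in $\cA (G)$.

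The base case $s=0$ is then immediate: a qce-sheaf $M$ with support in dimension $0$ is of the form $f_1(\phi^1 M)$ with $\phi^1 M$ a torsion $\cOcF$-module, so the second preliminary bounds its injective dimension by $r=r+0$. For the inductive step, assume the bound for support dimension $\leq s-1$ and let $M$ be supported in dimension $\leq s$. The construction in the proof of Theorem \ref{Usheavesconstructible} produces a four-term exact sequence
$$0\lra K\lra M\lra E\lra C\lra 0, \qquad E:=\bigoplus_{\dim L=s}f_L(M(U(L))),$$
in which $K$ and $C$ are supported in dimension $\leq s-1$ (and in which $\phi^L M$ is torsion, as follows from qc-ness of $M$ and faithful flatness, so the second preliminary applies to each summand). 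By induction $K$ and $C$ have injective dimension $\leq r+s-1$; by the second preliminary $E$ has injective dimension $\leq r-s$. Splitting the sequence as $0\to K\to M\to M''\to 0$ and $0\to M''\to E\to C\to 0$ and chasing the long exact sequences for $\Ext^*_{\cA (G)}(N,-)$ yields $\Ext^n_{\cA (G)}(N,M)=0$ for all $n>r+s$ and all $N\in\cA (G)$, completing the induction.

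The main obstacle is really the first preliminary: one must verify both that $f_L$ is exact (via faithful flatness of $\cEi_L\cOcF$ over $\cOcFL$) and that direct sums over the infinite set $\cF /L$ of the resulting $f_L(I_{\tL})$ remain injective in $\cA (G)$ --- this is where Corollary \ref{sumsofinjectives}, which itself relied on the finiteness lemmas \ref{torsionsum} and \ref{finitespread}, is indispensable. The other technical subtlety is the torsionness of $\phi^L M$ for $L$ of maximal dimension in the support, which also uses faithful flatness. Once these preliminaries are in hand, the induction is mechanical, and the resulting bound $2r$ can later be refined to the expected tight value $r$ as promised in \cite{tnq2}.
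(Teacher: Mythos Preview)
Your proof is correct and follows essentially the same inductive scheme as the paper: induct on the dimension $s$ of support, use the map $M\to\bigoplus_{\dim L=s}f_L(\phi^LM)$ from Theorem~\ref{Usheavesconstructible} to produce two short exact sequences whose outer terms have smaller support, and bound the injective dimension of the middle term by resolving each $\phi^LM$ over the polynomial rings $H^*(BG/\tL)$ and invoking Corollary~\ref{sumsofinjectives}. Your intermediate bound $\id(E)\le r-s$ is sharper than the paper's $\id(E)\le r$, but the bottleneck is the inductive bound $r+s-1$ on the kernel and cokernel, so the final estimate is unchanged; one small caveat is that your second preliminary is stated for \emph{arbitrary} torsion $\cOcFL$-modules, whereas the decomposition $V=\bigoplus_{\tL}V_{\tL}$ that underlies it really uses Hausdorffness (cf.\ the lemma following \ref{ffHausdorff}), which is fine since you only apply it to modules of the form $\phi^LM$.
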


\begin{proof} We prove by induction on  $s$ that any qce-sheaf 
$M$ supported on subgroups of dimension $\leq s$ is of injective dimension 
at most $r+s$ ($\id (M) \leq r+s)$. 

If $M$ is a qce-sheaf supported in dimension 0, then $M=f_1(N)$ and
by \ref{sumprod} we have a decomposition 
$N =\bigoplus_F N_F$ with $N_F$ a torsion $H^*(BG/F)$-module. Since
each $N_F$ has an injective resolution of length $\leq r$ consisting
of torsion modules; indeed, any torsion module embeds in a sum of copies
of the injective hull of the residue field, and the cokernel is again 
a torsion module. We can add these injective modules and apply $f_1$ to obtain 
an injective resolution of $M$.

Now suppose $s \geq 1$, and that the theorem has been proved for sheaves 
supported in dimension $s-1$.
If $M$ is supported in dimension $s$, we may consider the connected subgroups
$L$ of dimension $s$ and the map
$$j: M \lra E=\bigoplus_{\dim L =s}f_{L}(\phi^LM).$$
of \ref{Usheavesconstructible}. 
By definition this map is an isomorphism 
at each subgroup $L$ of dimension $s$,  so that its kernel and cokernel are 
supported in dimension $\leq s-1$. Thus we have  two short exact sequences
$$0 \lra M' \lra M \lra I \lra 0$$
and
$$0 \lra I \lra E \lra M'' \lra 0. $$
Since $M'$ and $M''$ are supported in dimension $\leq s-1$ by construction, 
they have injective dimension $\leq r+s-1$ by induction. It therefore 
follows that 
$\id (M) \leq r+s $   if  $\id (I) \leq r+s$, so 
we may concentrate on the second exact sequence.

Next, $\phi^L M =\bigoplus_{\tL} M_{\tL}$, and by \ref{sumsofinjectives}, we see 
that the injective dimension of $E$ is the maximum of those of the $M_{\tL}$,
 and hence  $\leq r$ since this is true of $H^*(BG/\tL)$-modules. It follows
from the second exact sequence that $\id (I) \leq r+s$.
\end{proof}

\subsection{Realizing enough injectives.}
When we come to connections with $G$-spectra we need to know 
we can realize enough injectives, and accordingly it is good
to have  a small list of injectives.

\begin{lemma}
\label{enoughinjectives}
There are enough injective qce-sheaves of $\cO$-modules
which are sums of those of the form 
$$I(\tL )=f_{L}(H_*(BG/\tL))$$
 where $\tL$ is a subgroup with identity component $L$.
\end{lemma}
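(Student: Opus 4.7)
The plan is to prove the lemma in two halves: first show that each $I(\tL)$ is an injective object of $\cA(G)$, and then show that every qce-sheaf admits a monomorphism into a sum of such $I(\tL)$'s.

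For injectivity, I would invoke Corollary \ref{sumsofinjectives}, which (specialized to a single summand) says that $f_L(I)$ is injective whenever $I$ is a torsion injective $H^*(BG/\tL)$-module. Since $\tL$ has identity component $L$, the quotient $G/\tL$ is a torus of dimension $r-\dim L$, so $H^*(BG/\tL) \cong \Q[x_1,\ldots,x_{r-\dim L}]$ is a graded polynomial ring; its graded $\Q$-linear dual $H_*(BG/\tL)$ is the injective hull of the residue field at the augmentation ideal, and so is at once torsion and injective over $H^*(BG/\tL)$. Viewing it as a torsion $\cOcFL$-module via the projection $\cOcFL \to H^*(BG/\tL)$, Corollary \ref{sumsofinjectives} then applies.

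To see that there are enough such injectives, I would argue by induction on the maximum dimension $s$ of the support of a qce-sheaf $M$. For $s=0$, Theorem \ref{Usheavesconstructible} writes $M = f_1(N)$ for a torsion $\cOcF$-module $N$; applying Lemma \ref{sumprod} (plus standard Burnside-style idempotent decomposition) splits $N = \bigoplus_F N_F$ into torsion $H^*(BG/F)$-modules, each of which embeds into a sum of copies of its injective hull $H_*(BG/F)$. Since $f_1$ is a right adjoint it preserves monomorphisms, and a direct check on $U(1)$ shows it commutes with sums of torsion modules, giving the desired embedding into a sum of $I(F)$'s.

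For the inductive step, I would use the natural map $g: M \to \bigoplus_{\dim L=s} f_L(\phi^L M)$ constructed in the proof of Theorem \ref{Usheavesconstructible}. Since $g$ is an isomorphism on $U(H)$ for $\dim H = s$, its kernel is supported in dimension $<s$, and hence by induction embeds into a sum $E'$ of $I(\tL)$'s; injectivity of $E'$ then extends this embedding to $h: M \to E'$, and the combined map $(h,g)$ is a monomorphism. It remains to embed each $f_L(\phi^L M)$ into a sum of $I(\tL)$'s: for each connected $L$ of dimension $s$ one embeds the $\cOcFL$-module $\phi^L M$ into $\bigoplus_{\tL} H_*(BG/\tL)^{J_\tL}$ by combining Lemma \ref{finitespread} (each nonzero element has finite, nonempty spread, so the idempotents $e_{\tL}$ separate elements) with Lemma \ref{sumprod} (to route the separating map through a direct sum rather than a product) and the injective-hull argument on each $H^*(BG/\tL)$-module $e_{\tL}\phi^L M$; then apply $f_L$, which preserves monos and commutes with the relevant sums.

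The main obstacle is the last step: one must exploit quasi-coherence of $M$ to guarantee that the $H^*(BG/\tL)$-components $e_{\tL}\phi^L M$ are actually torsion (so that the injective hull $H_*(BG/\tL)$ cogenerates them) and that every nonzero $y \in \phi^L M$ has at least one $\tL$ with $e_{\tL}y \neq 0$. This is precisely where the finite-spread lemma \ref{finitespread} and the interaction of the extended structure with the infinite product $\cOcFL = \prod_{\tL} H^*(BG/\tL)$ do the real work.
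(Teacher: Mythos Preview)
Your proposal is correct and follows essentially the same route as the paper. Both arguments establish injectivity of $I(\tL)$ via Corollary~\ref{sumsofinjectives} together with the observation that $H_*(BG/\tL)$ is the injective hull of the residue field over the polynomial ring $H^*(BG/\tL)$, and both reduce the ``enough'' statement to embedding the building blocks $f_L(N)$ for torsion $N$.

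The only real difference is one of packaging. The paper dispatches the second half in a single sentence: citing Theorem~\ref{Usheavesconstructible} (every qce-sheaf is built from constant sheaves $f_K(V)$ by sums and short exact sequences) and Corollary~\ref{sumsofinjectives}, it says it suffices to resolve each $f_L(N_{\tL})$ by sums of $I(\tL)$'s, which is immediate since torsion $H^*(BG/\tL)$-modules embed in sums of the injective hull. You instead unpack this into an explicit induction on the dimension of support, mirroring the proofs of \ref{Usheavesconstructible} and \ref{injdimisrank}, and spell out the horseshoe step (embed the kernel, extend by injectivity, combine with $g$). That is perfectly fine and arguably clearer, but it is the same idea.

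One small correction: in your last paragraph you attribute the ``nonempty spread'' of a nonzero $y\in\phi^L M$ to Lemma~\ref{finitespread}, but that lemma only gives finiteness. The injectivity of $\phi^L M \to \prod_{\tL} e_{\tL}\phi^L M$ is the Hausdorff property, established separately in the paper (and which in fact uses \ref{finitespread} in its proof). With that citation adjusted, your argument goes through.
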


\begin{proof}
By Theorem \ref{Usheavesconstructible} and \ref{sumsofinjectives}, 
it suffices to observe
that qce-sheaves $f_L(N_{\tL})$ with $N_{\tL}$ a torsion $H^*(BG/\tL)$-module
have resolutions using modules of the form $f_L(I(\tL))$.
The module $H_*(BG/\tL)$ is the $\Q$-dual  of $H^*(BG/\tL)$ and  
therefore injective over it; indeed, it is the injective hull of the
residue field.  Any torsion $H^*(BG/\tL)$-module 
may be embedded in a sum of copies of this, and the quotient is again 
a torsion module. 
\end{proof}

\subsection{Hausdorff modules and flatness.}

It is essential in several places to know that the process of extension 
loses no information, which corresponds
to the idea that we can recover all $G/K$-information from the inflated
$G$-equivariant information by passage to geometric $K$-fixed points. 

First note that the rings $\cOcF$ and $\cOcFK$ are  products of polynomial 
rings. Quite generally, if $R=\prod_iR_i$ is a product of commutative
rings, and $M$ is an $R$-module, we write $M_i$ for the summand corresponding
to $R_i$. We say that an $R$-module $M$ is {\em Hausdorff} if the map
$M \lra \prod_i M_i$ is a monomorphism. To see this is a restriction, 
note that $(\prod_i M_i)/(\bigoplus_i M_i)$
is not Hausdorff if infinitely many of the $M_i$ are non-zero.

\begin{lemma}
\label{ffHausdorff}
The inflation map $\cOcFK \lra \cOcF$ and the localization 
$\cOcFK \lra \cEi_K \cOcF$ are faithfully flat on the category of 
Hausdorff modules. 
\end{lemma}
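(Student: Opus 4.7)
The strategy is to exploit the orthogonal idempotent decomposition of $\cOcFK = \prod_{\tK \in \cF/K} R_{\tK}$, with $R_{\tK} = H^*(BG/\tK)$, to reduce faithful flatness to a blockwise statement over a polynomial ring. The inflation regroups $\cOcF$ compatibly: under the idempotents $e_{\tK}$ of $\cOcFK$, the ring $\cOcF$ decomposes as $\prod_{\tK} S_{\tK}$ where $S_{\tK} = \prod_{F: FK = \tK} H^*(BG/F)$, and the component rings $R_{\tK'}$ with $\tK' \neq \tK$ act trivially on $S_{\tK}$. For a Hausdorff $\cOcFK$-module $M$, the defining inclusion $M \hookrightarrow \prod_{\tK} M_{\tK}$ (with $M_{\tK} = e_{\tK} M$), combined with orthogonality of idempotents, gives the blockwise description $e_{\tK}(M \otimes_{\cOcFK} \cOcF) = M_{\tK} \otimes_{R_{\tK}} S_{\tK}$.

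Blockwise faithful flatness is then classical commutative algebra. For each $F$ with $FK = \tK$, the map $BG/F \to BG/\tK$ is rationally the projection $BT^r \to BT^{r - \dim K}$, so $H^*(BG/F)$ is a free polynomial extension of $R_{\tK}$ in $\dim K$ variables. Thus $S_{\tK}$ is a product of free $R_{\tK}$-modules; since $R_{\tK}$ is a polynomial ring and hence Noetherian, such a product is flat (by Chase's theorem). Faithful flatness over $R_{\tK}$ follows by checking that the unique graded maximal ideal $\mathfrak{m}$ does not absorb $S_{\tK}$: the quotient $S_{\tK}/\mathfrak{m} S_{\tK}$ surjects onto $\prod_F H^*(B(\tK/F)) \neq 0$.

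To assemble, the faithful part (reflection of zero) is immediate: if $M$ is Hausdorff and $M \otimes_{\cOcFK} \cOcF = 0$, then each block $M_{\tK} \otimes_{R_{\tK}} S_{\tK}$ vanishes; blockwise faithful flatness gives $M_{\tK} = 0$ for all $\tK$, and then Hausdorffness gives $M = 0$. For preservation of short exact sequences of Hausdorff modules, blockwise flatness of $S_{\tK}$ over $R_{\tK}$ yields exactness blockwise; chasing the commutative square with vertical maps into the product of blocks, one lifts blockwise left-exactness to global left-exactness provided $M \otimes_{\cOcFK} \cOcF$ is itself Hausdorff. The statement for the localization $\cOcFK \to \cEi_K \cOcF$ follows the same pattern, since each Euler class in $\cE_K$ has nonzero component in every factor $H^*(BG/F)$, so inverting $\cE_K$ respects the block decomposition and yields blockwise a further faithfully flat localization of $S_{\tK}$.

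The main technical obstacle is precisely the last point: showing that $M \otimes_{\cOcFK} \cOcF$ is Hausdorff whenever $M$ is, so that the blockwise left-exactness of tensoring can be lifted globally. This must be deduced from the explicit structure of $\cOcF$ as an infinite product of $H^*(BG/F)$'s together with the Hausdorff condition on $M$, by tracking the $e_{\tK}$-components of a general element $\xi = \sum_i m_i \otimes s_i$ and using that an element of $M$ with all $e_{\tK}$-components zero must vanish.
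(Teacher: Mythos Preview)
Your approach is essentially the paper's: decompose into blocks indexed by $\tK\in\cF/K$, observe that each $H^*(BG/F)$ is free over $H^*(BG/\tK)$ so that $S_{\tK}$ is faithfully flat over $R_{\tK}$, and then assemble using the Hausdorff hypothesis. The ``technical obstacle'' you flag---that $M\otimes_{\cOcFK}\cOcF$ is Hausdorff when $M$ is---is exactly what the paper asserts when it says ``the map $(\prod_i M_i)\otimes N\to\prod_i(M_i\otimes_{\prod_iR_i}N)$ is a monomorphism''; the paper simply states this, while you are more explicit that it needs an argument.

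The one place your route diverges is the localization $\cOcFK\to\cEi_K\cOcF$. You propose to treat it blockwise as ``a further faithfully flat localization of $S_{\tK}$'', but this is not quite straightforward: the Euler classes are not elements of $R_{\tK}$, and checking that $\cEi_K S_{\tK}\otimes_{R_{\tK}} R_{\tK}/\fm\neq 0$ requires tracking how $c_1(\alpha)$ sits relative to $\fm\cdot H^*(BG/F)$. The paper avoids this by a cleaner device: it shows directly that for each simple $\alpha$ with $\alpha^K=0$, multiplication by $e(\alpha)$ on $\cOcF$ is a \emph{split} monomorphism of $\cOcFK$-modules (checked factor by factor, using that $c_1(\alpha)$ can be taken among the polynomial generators of $H^*(BG/F)$ over $H^*(BG/FK)$). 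Hence the filtered colimit defining $\cEi_K\cOcF$ is along split monomorphisms of $\cOcFK$-modules, so tensoring with any $N$ still gives a system of monomorphisms, and faithfulness of $\cEi_K\cOcF$ follows from that of $\cOcF$. This split-mono argument is both shorter and more robust than the blockwise-localization approach you sketch.
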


\begin{proof}
First, it is clear that individual inflation maps $H^*(BG/\tK)\lra H^*(BG/F)$
are free and hence faithfully flat. It follows 
that $H^*(BG/\tK) \lra \prod_{FK=\tK}H^*(BG/F)$ is faithfully flat. The map
$\cOcFK \lra \cOcF$ is a product of such maps. 

Quite generally if we have modules
$M_i$ faithfully flat over rings $R_i$, then $\prod_i M_i$ is faithfully flat
over $\prod_iR_i$ in the category of Hausdorff modules. For flatness, we note 
that the product is exact and the map 
$$(\prod_i M_i) \tensor N \lra \prod_i(M_i\tensor_{\prod_iR_i}N)$$
is a monomorphism. For faithful flatness, we note that $\prod_iM_i \tensor_{\prod_iR_i}N$ 
has $M_i \tensor_{R_i}N $ as a retract, so by faithful flatness, if the
former were zero then 
$e_iN=0$, and so by the Hausdorff property, $N$ would be zero.

Flatness of the localization follows since localization can be effected 
by a filtered colimit.  Faithfulness uses the special nature of the 
set of elements inverted. Indeed, the colimit diagram consists of 
multiplication by $e(W)$ where $W^K=0$. We claim that if $\alpha $ 
is a simple representation with $\alpha^K=0$ then  the map
$e(\alpha ): \cOcF \lra \cOcF$ is a split monomorphism of $\cOcFK$-modules.
Indeed, the map splits as a product over finite subgroups $F$, and it
suffices to prove the result for each factor. If $\alpha^F=0$ the map 
is the identity, which is certainly split mono. If $\alpha^F \neq 0$ the
map is 
$$c_1(\alpha): H^*(BG/F) \lra H^*(BG/F)$$ 
viewed as a map of modules over $H^*(BG/FK)$. Indeed, we may choose
a splitting $H^*(BG/F)=H^*(BG/FK) \tensor H^*(FK)$, and we may choose
polynomial generators for $H^*(FK)$, including $c_1(\alpha)$ amongst them.
Using the monomials as an $H^*(BG/FK)$-basis of $H^*(BG/F)$ it is clear
that the map is a split monomorphism.
\end{proof}

\begin{lemma}
If $X$ is a qce module then the modules $\phi^KX$ are all Hausdorff.
\end{lemma}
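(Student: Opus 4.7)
The plan is to reduce to the case $K=1$ via the fixed-point functor, then embed $X$ in a sum of tractable injectives, and verify Hausdorffness on each summand.

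First, by Lemma~\ref{fixedpoints}, $\PK$ takes qce $\cO$-modules over $G$ to qce $\cO$-modules over $G/K$, and $\phi^K X$ is identified with the global value $(\PK X)(U(1))$ over $G/K$. It therefore suffices to prove: for any torus $G$ and any qce $\cO$-module $X$, the module $X(U(1))$ is Hausdorff as an $\cOcF$-module.

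By Lemma~\ref{enoughinjectives}, $X$ embeds in $E = \bigoplus_{\tL} I(\tL)$ with $I(\tL) = f_L(H_*(BG/\tL))$. Evaluation at $U(1)$ is exact, so we obtain $X(U(1)) \hookrightarrow \bigoplus_{\tL} I(\tL)(U(1))$. A submodule of a Hausdorff module is Hausdorff, and a direct sum of Hausdorff modules is Hausdorff (since every element has finite support among summands, and the components are computed summand-wise), so the task reduces to showing each $I(\tL)(U(1))$ is Hausdorff.

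The key calculation is to identify
\[ I(\tL)(U(1)) = \cEi_L \cOcF \otimes_{\cOcFL} H_*(BG/\tL) \]
as a submodule of a direct product. Since $H_*(BG/\tL)$ is concentrated at the single idempotent $e_{\tL} \in \cOcFL$, only the factors of $\cOcF$ indexed by finite $F$ with $FL = \tL$ contribute. Euler classes in $\cE_L$ have non-zero-divisor images in each such polynomial ring $H^*(BG/F)$, so the localization $\cEi_L \cOcF$ embeds componentwise into $\prod_F H^*(BG/F)[\cE_L^{-1}]$. Using the rational freeness of the inflation $H^*(BG/\tL) \hookrightarrow H^*(BG/F)$ (arising from the collapse of the Serre spectral sequence of the fibration $BG/F \to BG/\tL$ with fibre $B(\tL/F)$), one concludes that $I(\tL)(U(1))$ embeds into the direct product
\[ \prod_{F : FL = \tL} \bigl( H^*(BG/F)[\cE_L^{-1}] \otimes_{H^*(BG/\tL)} H_*(BG/\tL) \bigr), \]
which is manifestly Hausdorff.

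The main obstacle is this last embedding step: tensor products do not commute with infinite direct products in general, so the conclusion relies essentially on the freeness of the inflation maps $H^*(BG/\tL) \hookrightarrow H^*(BG/F)$ to track elements through the tensor product and into the component product.
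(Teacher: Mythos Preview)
Your argument has a circularity problem. You invoke Lemma~\ref{enoughinjectives} to embed $X$ in a sum of the basic injectives $I(\tL)$. But the proof that such sums are injective (Corollary~\ref{sumsofinjectives}) rests on Lemma~\ref{sumprod} in its version for \emph{all} subgroups $\tL$, which in turn quotes Lemma~\ref{finitespread}. The proof of Lemma~\ref{finitespread} explicitly appeals to Corollary~\ref{faithfulflatness}, and that corollary is deduced from the very Hausdorff lemma you are trying to establish. So the embedding step cannot be taken for granted at this point in the logical development.

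The paper avoids this by a double induction: an outer induction on the rank of $G$ (so that $\phi^K X$ with $K\neq 1$ is Hausdorff by the result for $G/K$), and for the remaining case $K=1$ an inner induction on the dimension of support of $X$. The inductive step uses only the filtration map $X\to\bigoplus_{\dim K=d}f_K(\phi^KX)$ from Theorem~\ref{Usheavesconstructible}, which lands in the sum by Lemma~\ref{torsionsum} alone (not~\ref{finitespread}). The Hausdorffness of each summand $f_K(\phi^KX)(U(1))=\cEi_K\cOcF\otimes_{\cOcFK}\phi^KX$ then follows from the outer inductive hypothesis (giving $\phi^KX$ Hausdorff) together with the split-monomorphism description of the localization in Lemma~\ref{ffHausdorff}. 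Your final computation for $I(\tL)(U(1))$ is in the same spirit as this last step, but the one-shot embedding you use to reduce to it is not available without the circular appeal.
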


\begin{proof}
As in \cite[5.10.1]{s1q} we see that a submodule of a Hausdorff module 
is Hausdorff, and that the class of Hausdorff modules is closed under
taking sums, products and extensions. 

We argue by induction on the rank of $G$. The case when $G=1$ is
trivial. Now suppose $G$ is of rank $r \geq 1$ and that the result 
has been proved for smaller rank. In particular if $K \neq 1$, 
it follows from  the result for $G/K$ that $\phi^KX$ is Hausdorff.

Write $M=\phi^1X=X(U(1))$.  The argument that $M$ is Hausdorff is by induction on the 
dimension of support of $X$. If $X$ is supported in dimension 0 then 
$M=\bigoplus_F M_F$ by \ref{torsionsum}, so we may suppose $X$ is supported
in dimension $d \geq 1$ and the result has been proved for modules 
supported in smaller dimension. Now consider the exact sequence
$$0 \lra X' \lra X \lra \bigoplus_{\dim (K)=d}f_K(\phi^KX) \lra X'' \lra 0 $$
of \ref{injdimisrank}, 
and let $I=\im (X \lra \bigoplus_{\dim (K)=d}f_K(\phi^KX))$. By induction
$X'(U(1))$ is Hausdorff, so the exact sequence 
$$0 \lra X' \lra X \lra I \lra 0$$
shows it suffices  to prove $I(U(1))$ is Hausdorff. However $I$ is a subobject of 
the sum, so it suffices to show $f_K(\phi^KX)(U(1))$ is Hausdorff for all $K$.
In other words, we need to show that if $N$ is a Hausdorff $\cOcFK$-module
then $\cEi_K \cOcF \tensor_{\cOcFK}N$ is a Hausdorff $\cOcF$-module. 

For this, we note $\cOcF \tensor_{\cOcFK}N$ is Hausdorff since
the $F$-th summand is $H^*(BG/F) \tensor_{H^*(BG/KF)} N_{KF}$.  Since
$\cEi_K \cOcF \tensor_{\cOcFK}N$ is a colimit of these Hausdorff modules
under split monomorphisms by \ref{ffHausdorff}, it follows that it is Hausdorff.
\end{proof}

\begin{cor}
\label{faithfulflatness}
The inflation map $\cOcFK \lra \cOcF$ and the localization 
$\cOcFK \lra \cEi_K \cOcF$ are faithfully flat on the category of 
modules ocurring as $\phi^K M$. \qqed
\end{cor}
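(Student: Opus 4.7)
The plan is essentially to chain together the two preceding lemmas, so the argument is a one-line deduction rather than a genuine new result.

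First I would observe that Lemma \ref{ffHausdorff} already establishes that the inflation $\cOcFK \lra \cOcF$ and the localization $\cOcFK \lra \cEi_K \cOcF$ are faithfully flat whenever they act on Hausdorff modules over $\cOcFK$. So the task reduces to verifying that every module of the form $\phi^K M$, for $M$ a qce $\cO$-module, is Hausdorff as an $\cOcFK$-module.

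But this is exactly the content of the immediately preceding (unnumbered) lemma, which is stated for $\phi^K X$ with $X$ qce. I would apply that lemma with $X = M$ (and $K$ replaced by any connected subgroup of $G$ for which the result is needed; the argument there in fact covers every $K$, using a reduction to $G/K$ for $K \neq 1$ and a separate induction on dimension of support for $K = 1$). Combining the two yields the corollary directly.

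There is really no obstacle to overcome at this step: the hard work has already been done in proving Hausdorffness for the $\phi^K$-values and in establishing faithful flatness for Hausdorff modules. I would present the proof as simply ``Combine the previous two lemmas,'' making explicit that $\phi^K M$ lies in the class of Hausdorff $\cOcFK$-modules on which \ref{ffHausdorff} applies. The only stylistic choice is whether to restate the two hypotheses (flatness and faithfulness) separately or to bundle them together; since \ref{ffHausdorff} packages both, bundling is cleaner and matches the statement of the corollary.
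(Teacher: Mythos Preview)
Your proposal is correct and matches the paper's approach exactly: the corollary is marked with \qqed\ (no written proof) precisely because it follows immediately by combining Lemma~\ref{ffHausdorff} with the preceding lemma showing that $\phi^K M$ is Hausdorff for qce $M$.
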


\part{The Adams spectral sequence.}

In Part 2 we  introduced the algebraic category $\cA (G)$, and 
in Part 3 we provide the connection with $G$-equivariant cohomology 
theories by defining the functor $\piA_*$ and constructing an Adams
spectral sequence based on it.

The basis of the connection is the calculation  
$$\cOcF =[\efp , \efp]^G_*$$
of the endomorphism ring of $\efp$. This is completed in 
Section \ref{sec:Endefp}. In preparation we begin by 
understanding the basic building blocks and how they are
related to each other. 

\section{Basic cells.}
\label{sec:Basic}

The familiar generators in topology are the natural cells $G/K_+$, 
but when working stably and rationally these break up further
using idempotents from the rationalized Burnside ring,  
and it is more convenient to consider the resulting pieces.
In more detail, smashing with $G/K_+$ gives a ring homomorphism 
$[S^0,S^0]^K \lra [G/K_+, G/K_+]^G$, and $[S^0,S^0]^K$ is isomorphic
 to the rationalized Burnside ring of finite $K$-sets $A(K)$. Indeed
the map 
$$\phi : [S^0,S^0]^K \stackrel{\cong} \lra \prod_{H} \Q$$ 
with $H$th component being given by the degree of geometric $H$-fixed points
is a ring isomorphism.

\begin{defn}
The {\em basic cell} for the closed subgroup $K$ is defined by 
$$\sK =e_{K/\Kz} G/K_+, $$
 where $e_{K/\Kz} \in A(K/\Kz)$ is the primitive idempotent
in the Burnside ring corresponding to the group $K/\Kz$ of components of $K$. 
\end{defn}

The usefulness of the basic cells is that they provide decompositions
of all the natural cells. 

\begin{lemma}
Suppose $\tK$ is a subgroup with identity component $\Kz$. 
There is a decomposition 
$$G/\tK_+ \simeq \bigvee_{\Kz \subseteq K \subseteq \tK} \sK$$
where the splitting is indexed by subgroups $K/\Kz$ of the group 
$\tK/\Kz$ of components of $\tK$.
\end{lemma}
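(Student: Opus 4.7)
The plan is to use the primitive-idempotent decomposition of the rationalised Burnside ring $A(\tK)_\Q$ to split the unit $\tK$-spectrum, and then induce the splitting up to $G$.

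Since $G$ is a torus, $\tK$ is abelian, and its subgroups $K$ with finite Weyl group $W_{\tK}(K)=\tK/K$ are exactly those with $\Kz\subseteq K\subseteq\tK$. These biject with subgroups of the finite component group $\tK/\Kz$, so that
$$A(\tK)_{\Q}\;\cong\;\prod_{\Kz\subseteq K\subseteq\tK}\Q\cdot e_K^\tK$$
with primitive idempotents $e_K^\tK$. The decomposition of the identity $1=\sum_{K}e_K^\tK$ yields a $\tK$-equivariant splitting $S^{0}\simeq\bigvee_{K}e_K^\tK S^{0}$, and the exact induction functor $G_{+}\wedge_{\tK}(-)$ produces a splitting
$$G/\tK_{+}\;\simeq\;\bigvee_{\Kz\subseteq K\subseteq\tK}G_{+}\wedge_{\tK}e_K^\tK S^{0}.$$

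To finish, I identify the summand $G_{+}\wedge_{\tK}e_K^\tK S^{0}$ with the basic cell $\sK=e_{K/\Kz}G/K_{+}$. The key is the rational Burnside-ring identity $[\tK:K]\cdot e_K^\tK=\mathrm{ind}_{K}^{\tK}(e_{K/\Kz})$, valid in $A(\tK)_{\Q}$; here $e_{K/\Kz}\in A(K/\Kz)$ is inflated to $A(K)$, where it coincides with the top primitive idempotent $e_{K}^{K}$. This identity is the standard computation in the Burnside ring of the finite quotient $\tK/\Kz$. Combined with the transitivity of induction $G_{+}\wedge_{\tK}\mathrm{ind}_{K}^{\tK}(-)\simeq G_{+}\wedge_{K}(-)$, it gives the chain of rational equivalences
$$G_{+}\wedge_{\tK}e_K^\tK S^{0}\;\simeq\;\tfrac{1}{[\tK:K]}\,G_{+}\wedge_{\tK}\mathrm{ind}_{K}^{\tK}(e_{K/\Kz} S^{0})\;=\;\tfrac{1}{[\tK:K]}\,G_{+}\wedge_{K}e_{K/\Kz}S^{0}\;=\;\tfrac{1}{[\tK:K]}\,\sK\;\simeq\;\sK.$$

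The main obstacle is the clean identification of the idempotent summand with $\sK$, which requires careful bookkeeping with the index factor $[\tK:K]$ and the interaction of induction with the Burnside-ring idempotents; once this is absorbed into the rational equivalence, the splitting follows at once. As a sanity check, one can verify the conclusion on geometric fixed points: $\Phi^{H}(\sK)$ is non-zero exactly when $H\Kz=K$, so for each $H\subseteq\tK$ there is a unique contributing summand (namely $K=H\Kz$), in agreement with $\Phi^{H}(G/\tK_{+})=\Sigma^{\infty}(G/\tK)_{+}$.
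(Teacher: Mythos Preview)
Your overall strategy—split $S^0$ using the idempotents of $A(\tK)_\Q$, induce up to $G$, and then identify each summand $G_+\wedge_{\tK}e_K^{\tK}S^0$ with $\sK$—is exactly how the paper begins. The gap is in your identification step.

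The Burnside-ring identity $[\tK:K]\,e_K^{\tK}=\mathrm{ind}_K^{\tK}(e_K^K)$ is an equality of \emph{elements} of $A(\tK)_\Q$, where $\mathrm{ind}_K^{\tK}$ is the transfer $A(K)\to A(\tK)$. In the next line you silently reinterpret $\mathrm{ind}_K^{\tK}$ as the spectrum-level functor $\tK_+\wedge_K(-)$ in order to invoke transitivity. These are different operations, and the bridge you need is false: the $\tK$-spectra $e_K^{\tK}S^0$ and $\tK_+\wedge_K(e_K^K S^0)$ are \emph{not} equivalent, even rationally. For instance $\Phi^K(e_K^{\tK}S^0)\simeq S^0$ while $\Phi^K\bigl(\tK_+\wedge_K(e_K^K S^0)\bigr)\simeq(\tK/K)_+\simeq\bigvee_{[\tK:K]}S^0$. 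Writing ``$\tfrac{1}{[\tK:K]}$'' in front of a spectrum does not repair this: scalars act on maps, not on homotopy types. So the displayed chain of equivalences does not go through as written.

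What does work—and what your sanity check almost says—is to argue at the $G$-level. The quotient $G/K_+\to G/\tK_+$ is compatible with the idempotents (since $\mathrm{res}^{\tK}_K e_K^{\tK}=e_K^K$), giving a map $\sK\to G_+\wedge_{\tK}e_K^{\tK}S^0$. On geometric $H$-fixed points both sides vanish unless $H\Kz=K$, and in that case the map is the covering $(G/K)_+\to(G/\tK)_+$ of tori, which is a rational equivalence. The paper reaches the same conclusion by a different route: it shows the cofibre $G_+\wedge_{\tK}e_K\widetilde{\tK/K}$ is contractible by reducing to the rank-one case via a product decomposition $G=G'\times G''$ with $\tK\subseteq G'$. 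Either way, the identification genuinely uses that $G/\tK$ is a torus; a purely Burnside-ring argument at the level of $\tK$ cannot suffice.
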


\ppf\ We follow the pattern of \cite[2.1.5]{s1q}. It suffices to 
show that if $\Kz \subseteq K \subseteq \tK$ then 
$G_+ \sm_Ke_KS^0=\sK \simeq  G_+ \sm_{\tK} e_KS^0$. Indeed, we need only 
show that $G_+ \sm_{\tK} e_K \widetilde{\tK/K}$ is contractible
where $\widetilde{\tK/K}=\cofibre (\tK/K_+ \lra S^0)$.

We suppose $G=G' \times G''$ with $\tK \subseteq G'$ a product of inclusions
of a cyclic group in a circle. It suffices to prove the analogous result
with $G$ replaced by $G'$.
The analogue
of \cite[2.1.4]{s1q} replaces a single cofibre sequence by 
$r'=\rank (G')$ of them.
For each of the cyclic factors we apply the method of \cite[2.1.5]{s1q}
to the permutation representation of $\tK/K$.
\qqed

\begin{lemma}
Maps between basic cells in degree 0 are as follows
$$
[\sK , \sL]^G_0=
\left\{
\begin{array}{cc}
\Q & \mbox{ if $K$ is cotoral in $L$}\\
0  & \mbox{ otherwise} \end{array} \right. $$
\end{lemma}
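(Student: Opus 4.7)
The plan is to compute $[\sK,\sL]^G_0$ via the Wirthm\"uller isomorphism and the rational splitting of $\pi_*^K$ for the compact abelian Lie group $K$. By Wirthm\"uller, $[G/K_+,Y]^G_0=\pi^K_0(Y)$, so
$$[\sK,\sL]^G_0 \;=\; e_{K/\Kz}\cdot\pi^K_0(\sL).$$
Rationally, the Segal--tom Dieck splitting gives
$$\pi^K_0(Y)_\Q\;\cong\;\bigoplus_{\Kz\subseteq H\subseteq K}\pi_0(\Phi^H Y)_\Q^{W_K(H)},$$
and the primitive idempotent $e_{K/\Kz}\in A(K)_\Q$ projects onto the summand $H=K$, where $W_K(K)=1$. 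Hence
$$[\sK,\sL]^G_0 \;=\; \pi_0\bigl(\PK\sL\bigr)_\Q.$$

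Next I compute the right-hand side. If $K\not\subseteq L$, then $(G/L)^K=\emptyset$ (since $G$ is abelian), so $\PK(G/L_+)=0$ and hence $\PK\sL=0$; in this case $K$ is already not cotoral in $L$. If $K\subseteq L$, then $K$ is normal in $L$, and the standard formula for geometric fixed points of an induced spectrum gives
$$\PK\bigl(G_+\sm_L X\bigr)\;\simeq\;(G/K)_+\sm_{L/K}\PK(X)$$
for any $L$-spectrum $X$. Applied to $X=e_{L/\Lz}S^0$, this becomes
$$\PK(\sL)\;\simeq\;\alpha\cdot(G/L)_+$$
as a $G/K$-spectrum, where $\alpha\in A(L/K)$ is the image of $e_{L/\Lz}\in A(L)$ under the natural map $A(L)\to A(L/K)$ induced by $\PK$ on endomorphism rings of $S^0$.

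The element $\alpha$ is identified componentwise in the rational decomposition $A(L/K)_\Q=\prod_{(L/K)_1\subseteq J\subseteq L/K}\Q$, using the compatibility of Burnside-ring idempotents with geometric fixed points: one finds that $\alpha$ is the primitive idempotent supported at the full subgroup $L/K$. Its augmentation to $\Q$ (equivalently, its non-equivariant degree) is $1$ if $L/K$ is connected, i.e.\ if $K$ is cotoral in $L$, and $0$ if $L/K$ is disconnected. Combined with $\pi_0\bigl((G/L)_+\bigr)_\Q=\Q$ (since $G/L$ is a connected torus), this yields the claim. The main obstacle is the book-keeping needed to combine the rational Segal--tom Dieck splitting, the Burnside-ring idempotents $e_{K/\Kz}$ and $e_{L/\Lz}$, and the geometric fixed-point functor applied to the induced spectrum $G_+\sm_L e_{L/\Lz}S^0$; each ingredient is standard, but the combination must be assembled with care over the rationals.
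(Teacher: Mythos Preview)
Your reduction $[\sK,\sL]^G_0=e_{K/K_1}\pi_0^K(\sL)$ is fine, and your computation of $\Phi^K\sL$ and its $\pi_0$ is correct. The gap is in the middle step: the displayed ``Segal--tom Dieck splitting''
\[
\pi^K_0(Y)_\Q\;\cong\;\bigoplus_{K_1\subseteq H\subseteq K}\pi_0(\Phi^H Y)_\Q^{W_K(H)}
\]
is not valid for an arbitrary $K$-spectrum $Y$ once $K_1$ is positive-dimensional. For example, with $K=S^1$ and $Y=\Sigma^{-1}EK_+$ one has $\pi_0^K(Y)\cong H_0(BK)=\Q$ via the Adams isomorphism, while $\Phi^HY\simeq *$ for every $H\supseteq K_1=K$, so the right-hand side is zero. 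The point is that $e_{K/K_1}S^0$ is inflated from $K/K_1$ and therefore has nontrivial geometric $H$-fixed points for \emph{every} $H$ with $HK_1=K$, not just for $H=K$; so $e_{K/K_1}$ does not in general project onto $\pi_0(\Phi^K(-))$.

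The identity $e_{K/K_1}\pi_0^K(\sL)=\pi_0(\Phi^K\sL)$ you need is nevertheless true here, because $\sL$ is a retract of the suspension spectrum $G/L_+$. For a $K$-space $X$ the tom Dieck splitting combined with the Adams isomorphism gives
\[
\pi_0^K(\Sigma^\infty_+X)\;\cong\;\bigoplus_{H\subseteq K}\pi_{-\dim(K/H)}\bigl((X^H)_{hK/H}\bigr),
\]
and connectivity kills every summand with $\dim(K/H)>0$, leaving only those with $K_1\subseteq H\subseteq K$; on these the idempotent $e_{K/K_1}$ does pick out $H=K$, which is $\pi_0(X^K)=\pi_0(\Phi^K\Sigma^\infty_+X)$. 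Inserting this connectivity argument repairs your proof. The paper argues rather differently: it stays with $[S^0,\sL]^K_0$ and analyses the two idempotents $e_{K/K_1}$, $e_{L/L_1}$ directly inside $[G/K_+,G/L_+]^G$, finishing with the observation that $G/L_+$ is built $L$-equivariantly from $S^0$ by cells of positive dimension; this is more elementary but makes the appearance of the cotoral condition less transparent than your geometric-fixed-point route.
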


\ppf\ We need only apply idempotents to the corresponding statements
with natural cells. Indeed $[G/K_+, \sL]^G=[S^0, \sL]^K$. This is zero unless
$L$ is of finite index in $K$. If $L$ is of finite index the idempotents
$e_K$ and $e_L$ are orthogonal unless $K=L$. Finally, if $K=L$ we note
that $G/L_+$ is $L$-equivariantly obtained from $S^0$ by attaching cells
of dimension $\geq 1$, and hence the desired group is a quotient of 
$e_K[S^0, S^0]^K=\Q$. It is non-trivial since $\sK$ is not contractible.
\qqed

\begin{lemma}
\label{endsF}
If $F$ is finite, the endomorphism ring of $\sF$ is exterior on $r$ generators,
$$[\sF, \sF]^G_*=\Lambda (H_1(G/F)) .$$
\end{lemma}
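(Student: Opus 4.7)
The plan is to reduce the computation to the non-equivariant homology of $G/F$ via two steps: the Wirthm\"uller isomorphism, and then the rational tom Dieck splitting, using heavily that $G$ is abelian.

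First, I would write
$$[\sF,\sF]^G_* = e_F\,[G/F_+, G/F_+]^G_*\,e_F = e_F\cdot \pi^F_*(G/F_+),$$
where the first equality is the definition of $\sF$ and the second combines the Wirthm\"uller isomorphism $[G/F_+,Y]^G_* \cong [S^0, Y]^F_* = \pi^F_*(Y)$ with the fact that $e_F \cdot e_F = e_F$ (so the two-sided action of $e_F$ collapses to one). This reduces the whole problem to understanding the $e_F$-component of $\pi^F_*(G/F_+)$.

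Next I would exploit that $G$ is abelian: the $F$-action on $G/F$ by left translation is trivial, since for $f \in F$ and $gF \in G/F$ one has $f\cdot gF = fgF = gfF = gF$. Thus $G/F_+$ is an $F$-spectrum with trivial action. Applying the rational tom Dieck splitting
$$\pi^F_*(Y_+)\otimes \Q \cong \bigoplus_{(H)\le F} \pi_*\bigl(BW_F(H)_+ \sm \Phi^H(Y_+)\bigr)\otimes\Q$$
to $Y = G/F$, and projecting with $e_F$, picks out only the $(H)=(F)$ summand. Since $W_F(F)=N_F(F)/F=1$ and $\Phi^F((G/F)_+) = (G/F)_+$ (as the action is trivial), this summand is simply $\pi_*((G/F)_+)\otimes \Q = H_*(G/F;\Q)$ because rational stable homotopy is ordinary homology. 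Since $G/F$ is an $r$-dimensional torus, its rational homology is the exterior algebra $\Lambda(H_1(G/F))$.

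Finally I must check that the composition product on $[\sF,\sF]^G_*$ corresponds to the exterior (Pontryagin) product on $H_*(G/F;\Q)$. Under Wirthm\"uller, composition of $G$-self-maps of $G_+\sm_F S^0$ pulls back to the convolution product on $\pi^F_*(G/F_+)$ induced by the group multiplication $G/F \times G/F \to G/F$ (this is the standard identification of $\pi^F_*(\Sigma^\infty T_+)$ as a ring for a topological group $T$). The idempotent $e_F$ is central in this ring structure, so the product descends to $e_F\pi^F_*(G/F_+) = H_*(G/F;\Q)$ and coincides with the Pontryagin product, which for the torus $G/F$ is the exterior algebra on $H_1$.

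The main obstacle is the ring-structure identification at the end: the additive statement is an immediate consequence of the rational tom Dieck splitting together with triviality of the $F$-action, but pinning down that the Wirthm\"uller-plus-idempotent composition product really does become the Pontryagin product requires keeping careful track of the Wirthm\"uller equivalence as an isomorphism of ring spectra (for $G/F_+ = G_+\sm_F S^0$ viewed with its group structure).
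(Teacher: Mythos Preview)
Your additive argument is essentially identical to the paper's: both use Wirthm\"uller to reduce $[\sF,\sF]^G_*$ to $\pi^F_*(G/F_+)$, observe that $G/F_+$ is $F$-fixed because $G$ is abelian, and read off $\Lambda(H_1(G/F))$. Your use of the tom Dieck splitting is just a slightly more formal way of saying what the paper says in one line.

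The ring-structure argument is where you diverge. The paper does \emph{not} identify the composition product with the Pontryagin product. Instead it takes the ring map
\[
[\sF,\sF]^G_* \lra \Hom\bigl(H_*(\sF),H_*(\sF)\bigr)
\]
given by action on non-equivariant homology. Since $H_*(\sF)\cong H_*(G/F_+)$, the codomain visibly contains the exterior algebra, and the paper then checks surjectivity in degree~1 (reducing to the rank~1 case by K\"unneth). Dimension-counting over $\Q$ finishes the job. This sidesteps entirely the question of what the product on $\pi^F_*(G/F_+)$ ``is'': one only needs that the map is a ring map and hits the degree~1 generators.

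Your route through the Pontryagin product is correct --- the Wirthm\"uller adjunction really does convert composition of $G$-self-maps of $G/F_+$ into the convolution product coming from the group multiplication on $G/F$, and $e_F$ is central because it lives in $A(F)=\pi^F_0(S^0)$ acting through the unit. But you rightly flag this identification as the sticking point, and it is not as ``standard'' as you suggest; writing it out carefully is about as much work as the paper's approach. The paper's argument buys you a shorter path: a single ring map plus a degree~1 check, with no need to track the Wirthm\"uller equivalence multiplicatively.
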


\ppf\ Additively the calculation is correct since
$[\sF, \sF]^G_*=[G/F_+, \sF]^G_*= 
[S^0, G/F_+]^F_*$ and $G/F_+$ is $F$-fixed and a torus with added basepoint. 
The ring structure may be seen by passing to non-equivariant homology
$$[\sF , \sF ]^G_* \lra \Hom (H_*(\sF ), H_*(\sF )). $$
This is a ring map and,  since  $H_*(\sF ) \cong H_*(G/F_+)$, 
the codomain is exterior. It remains to note that this is surjective in 
degree 1. This in turn follows from the rank 1 case by the K\"unneth theorem.
The rank 1 case is clear since the degree 1 map is tautologously detected
in $F$-equivariant homotopy and hence in homology.
\qqed

Finally we record the Whitehead Theorem for spectra with stable isotropy 
only at $F$.

\begin{lemma}
\label{FWhitehead}
If $F$ is finite, and  $X$ is a spectrum so that
\begin{enumerate}
\item $X$ has stable isotropy only at $F$ and
\item $[\sF , X ]^G_*=0 $ 
\end{enumerate}
then $X$ is contractible.
\end{lemma}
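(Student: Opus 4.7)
The plan is to reduce the statement to a standard argument in the triangulated category of rational $G$-spectra once I locate $X$ inside the localizing subcategory generated by $\sF$.

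First I will argue that the hypothesis of stable isotropy only at $F$ places $X$ in the localizing subcategory $\langle \sF \rangle$ generated by $\sF$. The point is that any such $X$ admits a $G$-CW model built from cells of the form $G/F_+ \sm S^n$; the preceding decomposition
$$G/F_+ \simeq \bigvee_{K \subseteq F} \sK$$
(applicable since $F$ is finite, so $\Kz = 1$ and $K$ ranges over all subgroups of $F$) splits each such cell as a wedge of basic cells. Only the summand $\sF$ contributes isotropy precisely at $F$; orthogonality of the Burnside-ring idempotents $e_K$ allows me to discard the summands $\sK$ for $K \subsetneq F$, since $X$ is unchanged by the idempotent $e_F$ picking out the $F$-component. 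Hence $X$ admits a cellular construction using only cells of the form $\sF \sm S^n$, and so lies in $\langle \sF \rangle$.

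Next I will invoke a formal triangulated-category argument. The full subcategory
$$\cC = \{\, Y \in G\mbox{-spectra} : [Y, X]^G_* = 0 \,\}$$
is closed under arbitrary coproducts, desuspension, and cofibre sequences, hence is localizing. By hypothesis (2), $\sF \in \cC$, so $\cC$ contains the entire localizing subcategory $\langle \sF \rangle$ generated by $\sF$. Combining this with the first step gives $X \in \cC$, so $[X,X]^G_* = 0$; in particular $1_X$ is null, whence $X \simeq *$.

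The main obstacle is Step 1: I need to pin down what ``stable isotropy only at $F$'' means in the authors' sense, and verify carefully that the idempotent splitting of $G/F_+$ really does place $X$ inside $\langle \sF \rangle$, rather than only inside the larger localizing subcategory generated by all $\sK$ with $K \subseteq F$. Once this is in hand, Step 2 is purely formal.
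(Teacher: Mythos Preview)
Your Step 2 is correct and purely formal. The gap is exactly where you suspected, in Step 1, and it is more serious than a matter of pinning down definitions.

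``Stable isotropy only at $F$'' means $\Phi^K X \simeq *$ for all $K \neq F$. From this you do get $X \in \langle G/K_+ : K \subseteq F\rangle = \langle G/F_+\rangle$, the last equality because each $G/K_+$ with $K \subseteq F$ is a wedge summand of $G/F_+$. But the further reduction to $X \in \langle \sF\rangle$ is not justified by your argument, and in fact is \emph{equivalent} to the lemma you are proving: saying that $\sF$ generates the localizing subcategory of spectra with isotropy only at $F$ is precisely the statement that $[\sF,Y]^G_*=0$ forces such $Y$ to be contractible. Your phrase ``$X$ is unchanged by the idempotent $e_F$'' does not have a clear meaning: $e_F$ lives in $A(F)\to [G/F_+,G/F_+]^G$ and acts on individual $G/F$-cells, but there is no action of $e_F$ on a general $G$-spectrum $X$. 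If you try to make this precise you are forced to show that each summand $\sigma_K^0$ for $K\subsetneq F$ maps trivially into $X$ --- which is the content you need, not something you may assume. Note also that the paper's construction of $E\langle F\rangle$ from $\sF$-cells (Proposition~\ref{charerf}) \emph{uses} this lemma, so you cannot appeal to it.

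The paper avoids the circularity by working with the Whitehead theorem directly. Since $\Phi^K X\simeq *$ for $K\neq F$, the spectrum $X$ is concentrated over the family $[\subseteq F]$ and it suffices to show $[G/K_+,X]^G_*=0$ for $K\subseteq F$. Using $G/K_+\simeq\bigvee_{L\subseteq K}\sigma_L^0$, one checks each summand: for $L\subsetneq F$, every $L'\subseteq L$ has $\Phi^{L'}X\simeq *$, so $\res^G_L X$ is $L$-contractible and $[\sigma_L^0,X]^G_*\subseteq[G/L_+,X]^G_*=\pi^L_*(X)=0$; for $L=F$ it is hypothesis (2). If you fold this observation about $[\sigma_L^0,X]^G_*$ for $L\subsetneq F$ into your Step 2 (enlarging the generating set to all $\sigma_L^0$, $L\subseteq F$), your argument becomes correct --- and becomes the paper's.
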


\ppf\ By (1) we have $\PK X $ trivial unless $K$ is a subgroup of $F$.
It therefore suffices to show that $[G/K_+, X]^G_*=0$ if $K \subseteq F$.
However, $G/K_+$ splits as a wedge of basic cells for finite subgroups. 
Since $X$ only has isotropy at $F$, the only possible contribution is 
from the summand $[\sF , X]^G_*$, and this is zero by hypothesis. 
\qqed

\section{Endomorphisms of injective spectra.}
\label{sec:Endefp}

The basis for the correspondence between algebra and topology is 
the universal space $\efp$ for the collection $\cF$ of finite
subgroups of $G$. This plays a central role because its 
endomorphism ring is so well behaved: the simplicity we see
here will have even more power in \cite{tnq3}.

We follow the strategy of \cite{s1q}, adapted to account for 
 the fact that the exterior algebra $H_*(G)$ and
the polynomial algebra $H^*(BG)$ now have $r$ generators, rather
than the single generator in the case of the circle. 

First it is convenient to introduce injective counterparts of the
basic cells.

\begin{defn}
\label{defn:erK}
For any subgroup $K$ we define the $G$-space $\erK$ by 
$$\erK = \cofibre (E[ \subset K]_+ \lra E[\subseteq K]_+).$$
\end{defn}

\begin{example}
(i) If $K=1$ we have $E \langle 1 \rangle =EG_+$.\\ 
(ii) If $K=G$ we have $E \langle G \rangle =\etp$ where $\cP$ is 
the family of proper subgroups of $G$.
\end{example}

Between them these give the general picture. 

\begin{lemma}
\label{lemma:erK}
If $K$ is a subgroup with identity component $\Kz$,
then there is an equivalence
$$\Phi^{\Kz}\erK \simeq E\langle K/\Kz \rangle$$
of $G/\Kz$-spaces and an equivalence 
$$\erK \simeq  S^{\infty V(\Kz)} \sm E\langle K/\Kz \rangle $$
of $G$-spaces. 
\end{lemma}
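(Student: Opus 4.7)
The plan is to compute the two equivalences in turn, using the cofibre sequence defining $\erK$ and the standard compatibilities between universal spaces, geometric fixed points and the spheres $\siftyK$.

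First, for the identification of $\PK[\Kz]\erK$, I would apply $\PK[\Kz]$ to the cofibre sequence
\[
E[\subset K]_+ \lra E[\subseteq K]_+ \lra \erK .
\]
The geometric $\Kz$-fixed points of a universal space $E\cF_+$ give the universal space for the sub-family of members of $\cF$ containing $\Kz$, viewed through the quotient as a family of subgroups of $G/\Kz$. Applied here, this identifies $\PK[\Kz]E[\subseteq K]_+ \simeq E[\subseteq K/\Kz]_+$ and $\PK[\Kz]E[\subset K]_+ \simeq E[\subset K/\Kz]_+$ as $G/\Kz$-spaces. Since $\PK[\Kz]$ preserves cofibre sequences, the cofibre is exactly $E\langle K/\Kz\rangle$ by Definition \ref{defn:erK}.

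For the second equivalence I would argue in two steps. Step one: show that $\erK$ already has the property $\erK \simeq \tilde{E}[\not\supseteq \Kz] \sm \erK$. Smashing the defining cofibre sequence $E[\not\supseteq \Kz]_+ \to S^0 \to \tilde{E}[\not\supseteq \Kz]$ with $\erK$, it suffices to check that $E[\not\supseteq \Kz]_+ \sm \erK \simeq *$, and by the usual cell argument this reduces to verifying $\pi_0^H(\erK) = 0$ for every $H$ with $H \not\supseteq \Kz$. Such an $H$ satisfies either $H \subsetneq K$ (in which case both fixed-point spaces in the defining cofibre sequence are contractible and hence so is $\erK^H$) or $H \not\subseteq K$ (in which case both fixed-point spaces are just the basepoint), giving the required vanishing.

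Step two: use the general fact that, for any $G$-space $X$, $\tilde{E}[\not\supseteq \Kz] \sm X$ depends only on the geometric $\Kz$-fixed points; concretely one has $\tilde{E}[\not\supseteq \Kz] \sm X \simeq \tilde{E}[\not\supseteq \Kz] \sm \inflGKG\!\PK[\Kz] X$, inflating the $G/\Kz$-space $\PK[\Kz]X$ back to $G$. Applying this to $X = \erK$ and substituting step one of the proof,
\[
\erK \; \simeq\; \tilde{E}[\not\supseteq \Kz] \sm \erK \;\simeq\; \tilde{E}[\not\supseteq \Kz] \sm E\langle K/\Kz\rangle .
\]
Using the identification $\tilde{E}[\not\supseteq \Kz]\simeq S^{\infty V(\Kz)}$ from the discussion preceding the lemma (writing $V(\Kz)$ for the sum of representations with trivial $\Kz$-fixed subspace), this gives the second claimed equivalence.

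The main obstacle is bookkeeping rather than depth: one has to be careful about what ambient group each universal space is built over, and to make sure the identifications $\PK[\Kz]E[\subseteq K]_+\simeq E[\subseteq K/\Kz]_+$ and $\tilde{E}[\not\supseteq \Kz] \sm X \simeq \tilde{E}[\not\supseteq \Kz] \sm \inflGKG\!\PK[\Kz]X$ are applied correctly so that the final smash product in the statement is interpreted with $E\langle K/\Kz\rangle$ inflated to a $G$-space.
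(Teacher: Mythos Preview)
Your proof is correct and follows essentially the same route as the paper. For the first equivalence you do exactly what the paper does: apply $\Phi^{\Kz}$ to the defining cofibre sequence and use that geometric fixed points of universal spaces for families are universal spaces for the induced families in $G/\Kz$. For the second equivalence, the paper argues tersely by comparing geometric fixed points of the two sides at every subgroup (trivial unless the subgroup contains $\Kz$, and at $\Kz$ the first statement applies); your argument via the isotropy separation sequence for $\Kz$ together with the standard equivalence $\tilde{E}[\not\supseteq \Kz]\sm X \simeq \tilde{E}[\not\supseteq \Kz]\sm \inflGKG \Phi^{\Kz}X$ is just a cleaner packaging of the same fixed-point comparison, with the added benefit that it makes the comparison map explicit.

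One small correction: in Step one you write ``reduces to verifying $\pi_0^H(\erK)=0$'', but what you actually need (and what you then correctly verify) is that $(\erK)^H$ is contractible for each $H\not\supseteq \Kz$; vanishing of a single homotopy group would not suffice.
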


\begin{proof}
For any family $\mcH$ of subgroups and any subgroup $L$
$$\Phi^LE\mcH_+\simeq E\mcH/L_+$$
where $\mcH /L$ is the family of subgroups of $G/L$
which are images of those of $\mcH$. This gives the 
first statement. 

The second statement follows: the only group with non-trivial geometric
 fixed points on either side is $\Kz$, and for $\Kz$ the equivalence
 is the first statement.
\end{proof}

We are now ready to  identify homotopy endomorphism 
rings. 

\begin{thm}
\label{endef}
The homotopy endomorphism ring of $\ef$ is given by 
$$[\ef , \ef ]^G_*=\cOcF = \prod_{F \in \cF} H^*(BG/F) .$$
\end{thm}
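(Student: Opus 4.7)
The plan is to reduce to a cohomotopy calculation and then decompose $\ef$ along the finite subgroups of $G$.

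First I would apply $[\ef,-]^G_*$ to the cofibre sequence $\ef \to S^0 \to \etf$ and use the induced long exact sequence to identify $[\ef,\ef]^G_*$ with $[\ef,S^0]^G_* = \pi^G_*(D\ef)$. This requires $[\ef,\etf]^G_* = 0$, which holds because $\ef$ is built as a $G$-CW spectrum from cells $G/F_+$ with $F\in\cF$, and for each such cell $[G/F_+,\etf]^G_* = \pi^F_*(\etf) = 0$ since $\etf^F = *$ for $F\in\cF$.

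Second I would identify $\pi^G_*(D\ef)$ with $\cOcF$ by exhibiting a rational splitting of $\ef$ whose $F$-isotypical component is the Borel-type $G$-spectrum $EG/F_+$ (viewed as a $G$-spectrum through the quotient $G\to G/F$). For each finite $F$, the primitive idempotent $e_F\in A(G)_\Q$ cuts out such a summand, and because $EG/F$ is $G/F$-free, $G$-equivariant maps from $EG/F_+$ to $S^0$ reduce to non-equivariant stable maps from the orbit space $BG/F_+$, whose rational cohomotopy is the ordinary cohomology $H^*(BG/F;\Q)$. Since a wedge decomposition on the source turns into a product on the target for $[-,S^0]^G_*$, this assembles into $\pi^G_*(D\ef) = \prod_F H^*(BG/F) = \cOcF$. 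The ring structure matches because the idempotents $e_F$ are orthogonal and each summand carries the cup product on $H^*(BG/F)$. A useful consistency check is provided by Lemma \ref{endsF}, where the basic cell $\sF$ has exterior endomorphism ring $\Lambda(H_1(G/F))$ that is Koszul-dual to the polynomial ring $H^*(BG/F)$ appearing on the injective side.

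The principal obstacle is justifying the rational isotypical decomposition of $\ef$ when $\cF$ is infinite: individual Burnside-ring idempotents $e_F$ exist in $A(G)_\Q$, but no internal infinite sum is available. The strategy is to filter $\ef$ using Euler-class cofibre sequences indexed by increasing finite collections of subgroups, generalizing the rank-one argument of \cite{s1q}, and to verify that passing to the limit yields the product $\cOcF$. The higher rank of the torus enters only through the polynomial structure of each factor $H^*(BG/F)\cong\Q[y_1,\dots,y_r]$, so that once the decomposition is in hand the contribution from each finite $F$ comes out automatically.
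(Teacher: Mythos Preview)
Your overall strategy --- reduce to $[\ef, S^0]^G_*$, split $\ef$ into $F$-isotypical pieces, and compute each --- matches the paper's. The splitting you flag as the principal obstacle is handled in the paper by a direct citation of \cite{EFQ}, which establishes the rational wedge decomposition $\ef \simeq \bigvee_{F\in\cF}\erf$; your proposed Euler-class filtration would amount to reproving that result.

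The genuine gap is in your treatment of each summand. The $F$-piece of $\ef$ is $\erf = \cofibre(E[\subset F]_+\to E[\subseteq F]_+)$, not the inflated $EG/F_+$. These agree only for $F=1$: in general $E[\subseteq F]_+\simeq EG/F_+$ as $G$-spaces, but $\erf$ is the further quotient by $E[\subset F]_+$. More seriously, even granting your identification, the Borel reduction fails: the inflated $EG/F_+$ is not $G$-free (every point has isotropy exactly $F$), so $G$-maps out of it do not reduce to non-equivariant maps out of $BG/F$. The inflation--fixed-point adjunction gives instead $[EG/F_+, S^0]^G_* = [EG/F_+,(S^0)^F]^{G/F}_*$, and the categorical $F$-fixed $G/F$-sphere $(S^0)^F$ is strictly larger than $S^0_{G/F}$ when $F\neq 1$.

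The paper computes $[\erf,\erf]^G_* = [\erf,S^0]^G_*$ by a different route: it builds $\erf$ cell-by-cell from basic cells $\sF$. Since $[\sF,\sF]^G_* = \Lambda(H_1(G/F))$ by Lemma~\ref{endsF}, the Koszul resolution of $\Q$ over this exterior algebra dictates exactly which cells to attach and in what degrees, producing a CW structure on $\erf$ with one cell $\Sigma^{2n}\sF$ for each degree-$n$ monomial in $H^*(BG/F)$. The additive statement then follows from $[\sF,S^0]^G_*=\Q$, and the ring structure from realizing the polynomial generators as self-maps of $\erf$. In other words, the Koszul duality you mention as a consistency check is in fact the engine of the proof.
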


\begin{proof}
First, as in \cite{EFQ} we may use idempotents to split $\efp$: 
$$\efp \simeq \bigvee_{F \in \cF} \erf .$$
It therefore suffices to prove the corresponding result, 
Theorem \ref{enderf},  about the summands.
\end{proof}

\begin{thm}
\label{enderf}
The homotopy endomorphism ring of $\erf$ is given by 
$$[\erf , \erf ]^G_* =  H^*(BG/F) .$$
\end{thm}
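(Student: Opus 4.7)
The plan is to identify $\erf$ as a $G$-spectrum geometrically concentrated at $F$, and then reduce via the geometric fixed-point Whitehead theorem \ref{univWhitehead} to a standard calculation over the quotient torus $G' := G/F$. First I would compute $\Phi^K \erf$ for every closed subgroup $K$ using $\Phi^K \Sigma^\infty_+ X = \Sigma^\infty_+ X^K$ applied to the defining cofibre sequence $E[\subset F]_+ \to E[\subseteq F]_+ \to \erf$. For $K \neq F$, the $K$-fixed points of $E[\subset F]$ and $E[\subseteq F]$ agree---both empty when $K \not\subseteq F$ and both contractible when $K \subsetneq F$---so $\Phi^K \erf = 0$. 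For $K = F$, $(E[\subset F])^F = \emptyset$ while $(E[\subseteq F])^F$ is a $G/F$-space with contractible $L$-fixed points at $L = 1$ and empty ones otherwise, hence a model for $EG/F$. Thus $\erf$ is $F$-geometrically concentrated with $\Phi^F \erf \simeq EG/F_+$.

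Next I would use \ref{univWhitehead} together with the adjunction $\Phi^F \dashv \mathrm{inf}_{G/F}^G$ to transfer the calculation to $G'$. Since $\mathrm{inf}_{G/F}^G(EG/F_+)$ is also $F$-geometrically concentrated with the same $F$-geometric fixed points as $\erf$, \ref{univWhitehead} gives an equivalence $\erf \simeq \mathrm{inf}_{G/F}^G(EG/F_+)$. The adjunction then yields
$$[\erf, \erf]^G_* = [\erf, \mathrm{inf}_{G/F}^G(EG/F_+)]^G_* \cong [\Phi^F \erf, EG/F_+]^{G/F}_* = [EG/F_+, EG/F_+]^{G/F}_*,$$
reducing the problem to the analogous statement for the torus $G'$ and its trivial finite subgroup.

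For the base case, I would show $[EG'_+, EG'_+]^{G'}_* = H^*(BG';\Q)$ for any torus $G'$. The cofibre sequence $EG'_+ \to S^0 \to \tE G'$ of $G'$-spectra, together with the vanishing $[EG'_+, \tE G']^{G'}_* = 0$ (which holds because $\tE G'$ is non-equivariantly contractible while $EG'_+$ is built from free $G'$-cells $G'_+ \sm S^n$, each of which maps trivially into a non-equivariantly contractible spectrum), yields $[EG'_+, EG'_+]^{G'}_* \cong [EG'_+, S^0]^{G'}_*$. Since $S^0$ carries the trivial $G'$-action, $G'$-equivariant maps from the free spectrum $EG'_+$ factor through the orbit $EG'_+/G' = BG'_+$, so $[EG'_+, S^0]^{G'}_* = [BG'_+, S^0]_* = H^*(BG';\Q)$. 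Applied with $G' = G/F$ this completes the identification.

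The main obstacle is the second paragraph---specifically the equivalence $\erf \simeq \mathrm{inf}_{G/F}^G(EG/F_+)$ and the careful handling of the $\Phi^F \dashv \mathrm{inf}_{G/F}^G$ adjunction; once these are in hand, the base case is a routine Borel-cohomology computation.
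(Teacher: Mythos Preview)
Your overall strategy---reduce to the quotient torus $G/F$ via geometric fixed points, then compute $[EG/F_+,EG/F_+]^{G/F}_*$ as Borel cohomology---is a legitimate alternative to the paper's approach, which instead builds an explicit cell structure on $\erf$ out of basic cells $\sF$, modelled on the Koszul resolution of $\Q$ over the exterior algebra $[\sF,\sF]^G_*=\Lambda(H_1(G/F))$ (Lemma~\ref{endsF}). However, as written your second paragraph contains two genuine errors.

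First, the equivalence $\erf\simeq\inflGKG_{G/F}(EG/F_+)$ is false. The inflated space has $K$-fixed points $(EG/F)^{KF/F}$, which for every $K\subseteq F$ is all of $EG/F$; hence its underlying non-equivariant geometric $K$-fixed point spectrum is $S^0$, not contractible. So $\inflGKG_{G/F}(EG/F_+)$ is a model for $E[\subseteq F]_+$, not for $\erf$. Second, there is no adjunction $\Phi^F\dashv\inflGKG_{G/F}$: testing on $X=G_+$ and $Y=G/F_+$ gives $[\Phi^F G_+,G/F_+]^{G/F}=0$ (since $G^F=\emptyset$ for $F\neq 1$) but $[G_+,\inflGKG_{G/F}(G/F_+)]^G=\pi_0^e(G/F_+)=\Q$. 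The correct right adjoint to $\Phi^F$ is $Y\mapsto \tE[\not\supseteq F]\sm\inflGKG_{G/F}(Y)$, reflecting that $\Phi^F$ is an equivalence only on the full subcategory of $[\supseteq F]$-concentrated $G$-spectra. With this correction your argument does go through: $\erf$ lies in that subcategory, so $[\erf,\erf]^G_*\cong[\Phi^F\erf,\Phi^F\erf]^{G/F}_*=[EG/F_+,EG/F_+]^{G/F}_*$. Note also that you should not cite Lemma~\ref{univWhitehead} here, since its proof ultimately rests on Theorem~\ref{endef} and hence on the result you are proving; invoke the geometric fixed point Whitehead theorem from \cite{lmsm} or \cite{assiet} directly instead.
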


The first tool is a characterization of $\erf$.

\begin{prop}
\label{charerf}
If $F$ is finite, the spaces $\erf$ are characterized by 
\begin{enumerate}
\item  $\erf$ has stable isotropy only at $F$ and 
\item $[\sF , \erf ]^G_*=\Q$.
\end{enumerate}
\end{prop}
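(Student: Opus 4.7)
My plan is to verify that $\erf$ satisfies the two conditions, and then to promote these conditions to a characterization by constructing a suitable map and applying the $F$-Whitehead theorem (Lemma \ref{FWhitehead}) to its cofibre.

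For property (1) applied to $\erf$, I would compute the geometric fixed points on the defining cofibre sequence $E[\subset F]_+ \to E[\subseteq F]_+ \to \erf$. Using $\Phi^K E\cH_+ = (E\cH)^K_+$, both outer terms yield $S^0$ when $K \subsetneq F$ and $*$ when $K \not\subseteq F$, so their cofibre is contractible except at $K = F$, where it is $S^0$. For property (2), I would use the model $\erf \simeq S^{\infty V(1)} \sm E\langle F\rangle$ of Lemma \ref{lemma:erK} together with the idempotent identification $[\sF,\erf]^G_* = e_F\pi^F_*(\erf)$ to reduce to a nonequivariant calculation, yielding $\Q$ concentrated in degree zero.

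For the converse, suppose $X$ has stable isotropy only at $F$ and $[\sF,X]^G_* = \Q$. Rational isotropy splitting expresses $\pi^K_*(X)$ as $\bigoplus_{J\le K}e_J\pi^K_*(X)$ with only the $J=F$ summand nonzero; since $F\not\le K$ for any $K\subsetneq F$, this forces $[G/K_+,X]^G_* = \pi^K_*(X) = 0$ for every such $K$. The skeletal filtration of $E[\subset F]_+$ is built from those cells, so $[E[\subset F]_+,X]^G_* = 0$, and the analogous filtration of $E[\subseteq F]_+$ receives contributions only from its $K=F$ cells, producing $[E[\subseteq F]_+,X]^G_* \cong [\sF,X]^G_* = \Q$. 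The cofibre sequence defining $\erf$ then yields $[\erf,X]^G_* \cong \Q$ concentrated in degree zero.

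Choose $\phi\colon \erf\to X$ representing a generator of $[\erf,X]^G_0$. The canonical map $\sigma\colon \sF\to G/F_+\to E[\subseteq F]_+\to \erf$ generates $[\sF,\erf]^G_0$, and the isomorphism $[\erf,X]^G_0 \cong [\sF,X]^G_0$ above is precomposition with $\sigma$, so $\phi\circ\sigma$ generates $[\sF,X]^G_0$ and $\phi_*$ is an isomorphism on $[\sF,-]^G_*$ in every degree. The cofibre $C$ of $\phi$ inherits stable isotropy only at $F$ from $\erf$ and $X$, and satisfies $[\sF,C]^G_* = 0$ by the long exact sequence, so Lemma \ref{FWhitehead} forces $C\simeq *$ and $\phi$ is an equivalence. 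The main obstacle is the rigorous tracking of the rational isotropy splitting, especially in identifying $[E[\subseteq F]_+,X]^G_*$ with $[\sF,X]^G_*$ in all degrees; this amounts to convergence of the skeletal spectral sequence, which is manageable because $F$ is finite and $X = e_F X$.
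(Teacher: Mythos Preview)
Your overall strategy---produce a map between $\erf$ and $X$ and apply Lemma~\ref{FWhitehead} to its cofibre---is sound, and your verification of (1) and (2) for $\erf$ is fine. But the heart of your argument, the identification
\[
[E[\subseteq F]_+,X]^G_* \cong [\sF,X]^G_* = \Q,
\]
is false. You correctly observe that only the $G/F$-cells contribute, but you then treat this as though there were a single such cell. In fact any $G$-CW model of $E[\subseteq F]$ has infinitely many $G/F$-cells: for instance one may take $E[\subseteq F]=\inf_{G/F}^G E(G/F)$, whose $G/F$-cells are indexed by the cells of $B(G/F)$. The skeletal spectral sequence then has $E_1^{p,0}=C^p(B(G/F);\Q)$ with the cellular differential, so it converges to $H^*(BG/F)$, not to $\Q$. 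Indeed for $X=\erf$ itself one has $[\erf,\erf]^G_*=H^*(BG/F)$ (this is exactly Theorem~\ref{enderf}), so your claim that $[\erf,X]^G_*$ is $\Q$ concentrated in degree zero cannot hold. You flagged this identification as ``the main obstacle,'' and it is: the proposed resolution via $X=e_FX$ does not address the multiplicity of $F$-cells.

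The paper sidesteps this by going in the other direction. Rather than computing $[\erf,X]^G_*$, it uses cellular approximation (killing $[\sF,\cdot]^G_*$-homotopy) to build an $\sF$-cell complex together with a map to $\erf$ inducing an isomorphism on $[\sF,\cdot]^G_*$; Lemma~\ref{FWhitehead} then gives the equivalence. The same construction applied to any $Y$ satisfying (1) and (2) produces a complex of the same shape, whence the characterization. This avoids ever needing to know $[\erf,X]^G_*$. Your approach could in principle be repaired---one does have $[\erf,X]^G_0\cong\Q$ and $\sigma^*$ is an isomorphism in degree $0$---but establishing this without circularity (i.e.\ without invoking the cell structure of $\erf$ proved in Theorem~\ref{enderf}, which itself rests on the present proposition) requires essentially the paper's argument anyway.
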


\ppf\ First note that $[\sF , \erf ]^G_*=\Q$. Now we proceed by cellular
approximation to construct a map $X \lra \erf$, where $X$ is constructed
from cells $\sF$ which is an isomorphism of $[\sF , \cdot ]^G_*$. 
This is an equivalence by the Whitehead Theorem \ref{FWhitehead}.
\qqed

We may now identify the endomorphism ring of $\erf$.

\begin{proof}[of \ref{enderf}]  
Note that $[\erf , \erf ]^G_*=[\erf , S^0]^G_*$ so the result will 
follow additively if we can construct $\erf$ with basic cells in even degrees
corresponding to the monomials in $H^*(BG/F)$. 
The proof is by killing homotopy groups.

In the proof of \ref{charerf} we noted 
 that $\erf$ can be constructed using the basic cell $\sF$. We repeat the 
proof, but this time keep track of the cells. 
By \ref{endsF}, the endomorphism ring of $\sF$ is exterior on $r$ generators.
Indeed, let 
$$ \Q \lla P_0 \lla P_1 \lla P_2 \lla \cdots $$
be the standard Koszul resolution of $\Q $ by free $\Lambda H_1(G/F)$-modules. 
Thus 
$$P=\Lambda (H_1G/F) [c_1,c_2, \ldots , c_r].$$
Note that the kernel 
of each map $P_n \lra P_{n-1}$ is generated by its bottom degree elements
and these are in bijective correspondence with monomials of degree $n$.

We argue inductively that we may construct (1) 
a $2n$-dimensional complex $X^{(2n)}$ with basic 
cells in bijective correspondence with monomials of degree $\leq n$ in 
the $c_1, c_2, \ldots , c_r$ so that its cellular chain complex is 
the first $n$ stages of the Koszul resolution  and (2)
a map $X^{(2n)} \lra \erf$ which is $2n$-connected. 
This is certainly true for $r=0$, so we need only describe the inductive step.
However, by construction the bottom degree homotopy generates $n$th syzygy in 
the Koszul resolution, so there is no obstruction.

It remains to comment on the ring structure. Consider the cellular filtration, 
and the resulting spectral sequence for $[S^0 , \cdot ]^G_*$. We obtain a ring
map 
$$[\erf , \erf]^G_* \lra \Hom (H^*(BG/F), H^*(BG/F)).$$
Each generator $c_i \in H^2(BG/F)$ corresponds to a map of resolutions, 
and we may realize this by a map $\erf \lra \Sigma^2 \erf$. 
It follows that the ring map is surjective. 
By the additive result,  it is an isomorphism. 
\end{proof}

We also need to know this identification is natural for quotient maps.

Now there is a natural map $\ef \lra \efk$ of $G$-spaces, since 
every finite subgroup of $G$ has finite image in $G/K$. Viewing this 
is a map of spectra and dualizing, we obtain 
a map $\decfk \lra \decf$. Combined with the inflation map 
$[S^0, \decfk ]^{G/K}_* \lra [S^0, \decfk ]^{G}_*$, and using
\ref{endef} for $G$ and $G/K$, 
 we obtain  a ring homomorphism $\cOcFK \lra \cOcF$.

\begin{lemma}
 The geometrically induced ring homomorphism coincides with the map 
 $q^*: \cOcFK \lra \cOcF$ described in Subsection \ref{subsec:FundInfl}, 
which is the product of the ring homomorphisms
$$q^*_{\tK}:\cO_{\tK} \lra \cO_{q^{-1}_*(\tK)}=\prod_{q_*(F)=\tK /K} 
H^*(BG/F)$$
where $q_*: \cF \lra \cF /K$ is reduction mod $K$, and the components of
$q^*_{\tK}$ are induced by the quotient maps
$G/F \lra G/\tK$.
\end{lemma}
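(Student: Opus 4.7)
The plan is to diagonalize the ring homomorphism using the idempotent splittings from Theorem \ref{endef}, then verify the off-diagonal components vanish and identify the on-diagonal ones.

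First, I apply the wedge decomposition from the proof of Theorem \ref{endef} to both $G$ and $G/K$, obtaining
$\efp \simeq \bigvee_{F \in \cF} \erf$ and $\efk \simeq \bigvee_{\tK \in \cF/K} \inflGKG \tE\langle \tK/K\rangle$,
where $\tE\langle \tK/K\rangle$ denotes the $G/K$-equivariant analogue of Definition \ref{defn:erK}. The identifications $\cOcF = \prod_F H^*(BG/F)$ and $\cOcFK = \prod_{\tK} H^*(BG/\tK)$ from Theorem \ref{enderf} then decompose the geometric ring map into components $\phi_{F,\tK} : H^*(BG/\tK) \lra H^*(BG/F)$ obtained from the $G$-spectrum composite $\erf \hookrightarrow \efp \lra \efk \twoheadrightarrow \inflGKG \tE\langle \tK/K\rangle$ by dualizing and applying $\pi^G_*$.

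Next I will show $\phi_{F,\tK} = 0$ unless $FK = \tK$. By Lemma \ref{lemma:erK} applied in $G/K$ and then inflated, the target summand $\inflGKG \tE\langle \tK/K\rangle$ has stable isotropy concentrated at subgroups $L$ with $LK = \tK$; computing geometric $F$-fixed points via the standard commutation of $\Phi^F$ with inflation shows this is contractible when $FK \neq \tK$. Since the proof of Theorem \ref{enderf} builds $\erf$ cellularly from copies of $\sF$, and $[\sF, Y]^G_* = e_F \pi^F_*(Y)$ isolates the geometric $F$-fixed piece of the target, the vanishing of $\Phi^F$ kills every cellular contribution and forces the composite to be null.

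For $\tK = FK$, I must identify $\phi_{F, FK}$ with the inflation $H^*(BG/FK) \lra H^*(BG/F)$ induced by $G/F \twoheadrightarrow G/FK$. Both maps are algebra homomorphisms between polynomial rings, so it suffices to check agreement on the degree-two generators $c_1(\alpha)$ for characters $\alpha$ of $G/FK$. The construction of $\erf$ in the proof of Theorem \ref{enderf}, using the Koszul resolution of $\Q$ over $\Lambda H_1(G/F)$, is natural for the quotient $G/F \twoheadrightarrow G/FK$: the latter induces a surjection $\Lambda H_1(G/F) \twoheadrightarrow \Lambda H_1(G/FK)$, and the essentially unique map (by Proposition \ref{charerf}) $\erf \lra \inflGKG \tE\langle FK/K\rangle$ realizes this compatibility, so tracking its action on degree-two generators reproduces $q^*_{FK}$.

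The main obstacle is establishing the naturality invoked in the third step. Because Theorem \ref{enderf} identifies $[\erf, S^0]^G_* = H^*(BG/F)$ by an inductive cell-attachment argument, I need the attaching data to be chosen compatibly under inflation along $G \twoheadrightarrow G/K$. The cleanest route is to realize each generator $c_1(\alpha)$ concretely as the Euler class coming from the inclusion $S^0 \hookrightarrow S^\alpha$ of $G$-spheres; this description is manifestly functorial in inflation of characters, so the required compatibility drops out without ambiguity in the choice of cellular model.
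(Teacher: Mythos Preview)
Your approach is essentially the same as the paper's: split both sides via the wedge decompositions of $\efp$ and $\efk$, kill the off-diagonal components by a fixed-point/isotropy argument, and identify the diagonal components using Theorem~\ref{enderf}. The paper's proof is terser---it simply says ``from fixed points one sees that $\erf$ maps trivially to $\etKK$ unless $q(F)=\tK/K$'' and that ``\ref{enderf} completes the proof''---whereas you spell out the diagonal identification more carefully via naturality of Euler classes under inflation of characters; this extra detail is genuinely helpful, though note that the Euler class description you invoke is only established in Section~\ref{sec:Euler}, which comes after the lemma.
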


\ppf\
We have the splitting $\ef \simeq \bigvee_{F\in \cF}  \erf$ of rational 
$G$-spectra \cite{EFQ}. Similarly the stable rational $G/K$-splitting
$\efk \simeq \bigvee_{\tK /K \in \cF /K}  \etKK$ may be inflated to a 
$G$-splitting. 
From fixed points one sees that the map $\ef \lra \efk$ respects the 
splitting in the sense that $\erf$ maps trivially to $\etKK$ unless 
$q(F)=\tK /K$. Since duality takes sums to products, 
\ref{enderf} completes the proof. 
\qqed

\section{Topology of Euler classes.}
\label{sec:Euler}

The next ingredient is to show that the inclusions $S^0\lra S^V$ 
induce suitable Euler classes. 

The  relevant input from topology comes from 
 the  Thom isomorphism for an individual stalk.
We once again use the basic injectives \ref{defn:erK}.

\begin{lemma}
For any finite group $F$ there is an equivalence
 $$S^V \sm \erf \simeq S^{| V^F|} \sm \erf .$$
\end{lemma}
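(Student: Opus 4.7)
The plan is to apply the characterization of $\erf$ from Proposition \ref{charerf}: a $G$-spectrum is equivalent to $\erf$ precisely when it has stable isotropy only at $F$ and satisfies $[\sF, -]^G_* \cong \Q$ concentrated in degree zero. I will verify that $S^V \wedge \erf$ satisfies these two conditions up to a shift by $|V^F|$; the characterization applied to $\Sigma^{-|V^F|}(S^V \wedge \erf)$ then identifies it with $\erf$, yielding the stated equivalence $S^V \wedge \erf \simeq S^{|V^F|} \wedge \erf$.

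The isotropy condition is immediate. Since geometric fixed points commute with smash products,
\[
\Phi^H(S^V \wedge \erf) \simeq S^{V^H} \wedge \Phi^H \erf,
\]
which vanishes for $H \neq F$ by Proposition \ref{charerf}, so $S^V \wedge \erf$ has stable isotropy only at $F$.

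For the second condition I would use $\sF = e_F G/F_+$ and the restriction-induction adjunction to rewrite
\[
[\sF, S^V \wedge \erf]^G_* \cong e_F \pi^F_*(S^V \wedge \erf).
\]
Since the spectrum has stable isotropy only at $F$, its restriction to $F$ has all isotropy concentrated at the full group, and rationally the Burnside-ring idempotent decomposition identifies $e_F \pi^F_*(-)$ with $\pi_*(\Phi^F(-))$ as non-equivariant abelian groups. Directly from the cofibre sequence defining $\erf$, the $F$-fixed points of $E[\subset F]$ are empty while those of $E[\subseteq F]$ are contractible, so $\Phi^F \erf$ has contractible underlying non-equivariant space and is thus equivalent to $S^0$ non-equivariantly. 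Consequently $\Phi^F(S^V \wedge \erf) \simeq S^{V^F} \wedge \Phi^F \erf$ has non-equivariant type $S^{|V^F|}$, whose rational stable homotopy is $\Q$ concentrated in degree $|V^F|$ and zero elsewhere.

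The main obstacle is justifying the rational identification $e_F \pi^F_*(Y) \cong \pi_*(\Phi^F Y)$ used above for $Y$ with stable isotropy at $F$. This relies on the standard splitting of the rational $F$-equivariant stable homotopy category by Burnside-ring idempotents, where the top idempotent $e_F$ corresponds to the geometric $F$-fixed point functor into non-equivariant spectra; once this is in hand, the proof reduces to the two verifications above plus one appeal to Proposition \ref{charerf}.
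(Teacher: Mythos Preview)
Your proof is correct and, like the paper's, ultimately appeals to the characterization in Proposition~\ref{charerf}. The difference lies in how the two compute $[\sF, S^V \sm \erf]^G_*$. The paper proceeds in two concrete steps: first, the cofibre of $S^{V^F} \to S^V$ is built from cells with isotropy not containing $F$ and hence becomes contractible after smashing with $\erf$, reducing to the case where $V$ is $F$-fixed; second, since $\erf$ is built from basic cells $\sF$ and $G/F_+ \sm S^{V^F} \simeq G/F_+ \sm S^{|V^F|}$ (the restriction of $V^F$ to $F$ being trivial), one reads off $[\sF, S^V \sm \erf]^G_* = \Sigma^{|V^F|}\Q$ directly from the cell structure. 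Your route instead invokes the rational identification $e_F\,\pi^F_*(-)\cong \pi_*\Phi^F(-)$ and then evaluates geometric $F$-fixed points. This is conceptually clean but imports a structural fact about the rational Burnside-ring splitting that the paper does not establish, whereas the paper's argument stays entirely within the elementary cell manipulations already developed in Sections~\ref{sec:Basic} and~\ref{sec:Endefp}. One small slip of wording: $\Phi^F\erf \simeq S^0$, not a contractible space; your final conclusion is right but the intermediate phrase misstates it.
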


\ppf\ The cofibre of the map $S^{V^F} \lra S^V$ 
is built from cells with isotropy not containing $F$. 
It is therefore  contractible when smashed with $\erf$. 
We may thus suppose $V$ is $F$-fixed. 

Now $\erf$ may be built from basic cells $\sF$. Since
$$G/F_+ \sm S^{V^F} \simeq G_+ \sm_F S^{|V^F|}
\simeq G/F_+ \sm S^{|V^F|}, $$
we find that  
$$\sF \sm S^{V^F} \simeq  \sF \sm S^{|V^F|}.$$ 
Accordingly,  $\erf \sm S^{V^F}$ is also built from 
cells $\sF$ and 
$$[\sF , \erf \sm S^{V^F}]^G_*=
[\sF , \erf \sm S^{|V^F|}]^G_*=\Sigma^{|V^F|}\Q.$$
Thus the result follows from \ref{charerf}.
\qqed

\begin{remark} 
Note that the proof displays a specific equivalence
on the bottom cell, and hence determines the 
homotopy class of the equivalence.
\end{remark}

As usual, the Thom isomorphism gives rise to an Euler class.

\begin{defn}
The $F$ Euler class $c(V)(F)$ of a representation $V$ is the map 
$$S^0 \sm \erf \lra S^V \sm \erf \simeq S^{|V^F|} \sm \erf. $$
\end{defn}

We may identify these Euler classes in familiar terms. 

\begin{lemma}
Under the identification $[\erf , \erf]^G_* =H^*(BG/F)$, 
the Euler class $c(V)(F)$ is the ordinary cohomology 
Euler class $c_H(V^F)$.
\end{lemma}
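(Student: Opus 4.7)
The plan is to reduce to the case $V = V^F$ and then analyze $c(V)(F)$ by passage to geometric fixed points. By the previous lemma, $S^V \sm \erf \simeq S^{|V^F|} \sm \erf$ via an equivalence determined on the bottom cell, so we may assume $V = V^F$, i.e., $V$ is inflated from a representation of $G/F$.

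Next I would compute $\Phi^F \erf$. Applying $\Phi^F$ to the defining cofibre sequence $E[\subset F]_+ \to E[\subseteq F]_+ \to \erf$: no proper subgroup of $F$ contains $F$, so $\Phi^F E[\subset F]_+ \simeq *$; while for $E[\subseteq F]$ the only subgroup $K$ with $F \subseteq K$ and $K \in [\subseteq F]$ is $K = F$, so $\Phi^F E[\subseteq F]_+$ is a $G/F$-space with only trivial isotropy, hence $\simeq EG/F_+$. Thus $\Phi^F \erf \simeq EG/F_+$ as $G/F$-spectra. Since $EG/F_+$ is free over $G/F$, its rational endomorphism ring is Borel cohomology, $[EG/F_+, EG/F_+]^{G/F}_* \cong H^*(BG/F)$.

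Applying $\Phi^F$ to $c(V)(F)$ then gives the $G/F$-map $EG/F_+ \sm (S^0 \hookrightarrow S^V)$. Under the Thom isomorphism this represents the pullback of the Thom class $\tau_H(V)$ along the zero section, i.e., the classical Euler class $c_H(V) = c_H(V^F)$. It remains to verify that the resulting ring isomorphism $[\erf, \erf]^G_* \xrightarrow{\Phi^F} H^*(BG/F)$ coincides with the identification of Theorem \ref{enderf}; this is the main obstacle, since \ref{enderf} identifies the endomorphism ring via non-equivariant homology of $\sF$ rather than via Borel cohomology of a point. The cleanest reconciliation is to check that both isomorphisms send the cellular generators arising from the Koszul construction in the proof of \ref{enderf} to the polynomial generators $c_1, \ldots, c_r \in H^*(BG/F)$, after which multiplicativity forces agreement everywhere.
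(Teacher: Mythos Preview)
Your approach is genuinely different from the paper's and is conceptually sound, though you have correctly flagged its weak point. The paper proceeds more directly: it reduces to one-dimensional representations via multiplicativity, dispatches the case $V^F=0$ trivially, and for $V$ fixed by $F$ reduces (via the kernel of $V$) to the rank-one case, where an explicit degree computation shows both Euler classes equal $n c_H(z)$ for $V=z^n$. This never leaves the endomorphism ring $[\erf,\erf]^G_*$ and so never has to compare two different identifications with $H^*(BG/F)$.

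Your route via $\Phi^F\erf \simeq EG/F_+$ is attractive because it makes the connection to Borel cohomology transparent, and once there the identification of the zero-section pullback with $c_H(V)$ is standard. But as you note, you then owe a check that the ring isomorphism $\Phi^F:[\erf,\erf]^G_*\to [EG/F_+,EG/F_+]^{G/F}_*\cong H^*(BG/F)$ agrees with the one in Theorem~\ref{enderf}. Your proposed fix---matching the Koszul cellular generators---would work: the cell structure on $\erf$ built from $\sF$ in the proof of \ref{enderf} is carried by $\Phi^F$ to the analogous cell structure on $EG/F_+$ built from $G/F_+$, so the degree-$2$ generators correspond. Alternatively, one can observe that $\Phi^F$ is a graded ring map between polynomial rings, nonzero in degree $0$, and check it is an isomorphism in degree $2$; but that last check essentially amounts to the paper's explicit circle computation. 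So your approach is correct but does not ultimately save the concrete verification---it relocates it.
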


\begin{proof} 
Since both Euler classes take sums of representations to products, 
it  suffices to consider a 1-dimensional representation $V$. 
If $V^F=0$, both Euler classes are 1. If $V$ is fixed by $F$,
then $V$ is a faithful representation of $G/K$ for some $(r-1)$-dimensional
subgroup $K$ containing $F$. Both maps are given by multiplication by 
a degree 2 class.

It therefore suffices to consider the  case of the circle and the 
representation $z^n$. The standard generator is the first Euler 
class $c_H(z)$
and the additive formal group shows $c_H(z^n)=nc_H(z)$. 
On the other hand, the identification of 
$G_+ \sm S^{z^n} \simeq G_+ \sm S^2$
lets $t(g\sm x)=tg \sm x$ in $G_+\sm S^2$
correspond to   $t(g\sm x)=tg \sm t^nx$ in $G_+\sm S^{z^n}$, which 
is a map of degree $n$. 
\end{proof}

In view of the splitting theorem $\ef \simeq \bigvee_{F \in \cF}\erf$ 
we obtain a general Thom isomorphism.

\begin{cor}
For any virtual complex representation $V$ and associated dimension function 
$v: \cF \lra \Z$  defined by $v(F)= \dim_{\bR}(V^F)$, there
are equivalences 
$$S^V \sm \efp \simeq \bigvee_F S^{v(F)}\sm \erf, $$
and
$$S^V \sm \defp \simeq \prod_F S^{v(F)}\sm D\erf.$$
\end{cor}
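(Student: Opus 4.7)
The plan is to reduce both statements to the single-stalk Thom equivalence $S^V\sm \erf\simeq S^{v(F)}\sm \erf$ proved in the preceding lemma, using the idempotent splitting $\efp\simeq\bigvee_{F\in\cF}\erf$ established in the proof of Theorem \ref{endef}.

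For the first equivalence, smashing with $S^V$ is a left adjoint (its right adjoint being $F(S^V,-)$), so it preserves arbitrary coproducts. Hence
$$S^V\sm \efp\;\simeq\; S^V\sm \bigvee_{F\in\cF}\erf\;\simeq\;\bigvee_{F\in\cF}(S^V\sm \erf),$$
and applying the stalk Thom equivalence termwise gives the displayed formula.

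For the second equivalence I would first dualize the splitting: since $F(-,S^0)$ converts wedges to products,
$$\defp\;=\;F(\efp,S^0)\;\simeq\;F\!\Bigl(\bigvee_F\erf,\,S^0\Bigr)\;\simeq\;\prod_F F(\erf,S^0)\;=\;\prod_F D\erf.$$
Now smash with $S^V$. Because $V$ is a finite-dimensional representation, $S^V$ is invertible in the stable category with inverse $S^{-V}$, so $S^V\sm(-)$ is both a left and a right adjoint and therefore commutes with products:
$$S^V\sm\defp\;\simeq\;\prod_F\bigl(S^V\sm D\erf\bigr).$$
To finish, I would transfer the stalk Thom equivalence to the duals. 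The invertibility of $S^V$ gives the natural equivalence $D(S^V\sm X)\simeq S^{-V}\sm DX$ (via $F(S^V\sm X,S^0)\simeq F(X,S^{-V})\simeq S^{-V}\sm F(X,S^0)$). Dualizing $S^V\sm\erf\simeq S^{v(F)}\sm \erf$ and then resmashing by $S^V\sm S^{v(F)}$ yields
$$S^V\sm D\erf\;\simeq\;S^{v(F)}\sm D\erf,$$
and substituting into the product above completes the proof.

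The only conceptual point that requires care is the commutation of $S^V\sm(-)$ with the infinite product $\prod_F D\erf$, and the corresponding identity $D(S^V\sm X)\simeq S^{-V}\sm DX$. Both of these rest solely on the invertibility of $S^V$ as a $G$-spectrum, so no further input beyond what is already in the excerpt is needed; everything else is a direct bookkeeping application of results already proved.
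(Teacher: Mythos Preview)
Your argument is correct and is exactly the unpacking the paper intends: the corollary is stated without proof immediately after the sentence ``In view of the splitting theorem $\efp \simeq \bigvee_{F \in \cF}\erf$ we obtain a general Thom isomorphism'', so the expected proof is precisely to combine the splitting with the stalk equivalence $S^V\sm\erf\simeq S^{|V^F|}\sm\erf$, which you do. Your care about commuting $S^V\sm(-)$ with the infinite product via invertibility of $S^V$, and about passing the Thom equivalence through duality, fills in the only nonobvious steps; the extension from actual to virtual $V$ is then immediate by writing $V=V_+-V_-$ and applying the stalk equivalence twice.
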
 

We may now define the global Euler class.

\begin{defn}
The Euler class of a complex representation $V$ is 
$$S^0 \sm \efp \lra S^V \sm \efp \simeq \bigvee_F S^{v(F)}\sm \erf,$$
as a non-homogeneous element of $\cOcF$.
\end{defn}

\begin{cor}
The Euler class, viewed as an element of $\cOcF$
has $F$th component.
$$c(V)(F)=c_H(V^F) \in H^*(BG/F).\qqed$$
\end{cor}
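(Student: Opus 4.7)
The plan is to reduce the identification of the global Euler class to the stalkwise computation already carried out in the preceding lemma, using the splitting $\efp \simeq \bigvee_{F \in \cF} \erf$.

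First I would unpack how the identification $[\efp, \efp]^G_* = \cOcF = \prod_F H^*(BG/F)$ interacts with the splitting. By Theorem \ref{endef} (and the proof of Theorem \ref{enderf} together with the remark that duality sends sums to products), an element of $\cOcF$ is the tuple of $F$-th components obtained by precomposing with the inclusion $\erf \hookrightarrow \efp$ and projecting onto the factor $D\erf$ of $\defp \simeq \prod_F D\erf$. So to compute the $F$-th component of the global Euler class $e(V): S^0 \sm \efp \lra S^V \sm \efp$, I need only restrict it along $\erf \hookrightarrow \efp$ and then project $S^V \sm \efp \simeq \bigvee_F S^{v(F)} \sm \erf$ onto the $F$-th wedge summand.

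Next, because the equivalence $S^V \sm \efp \simeq \bigvee_F S^{v(F)} \sm \erf$ is assembled wedge-summand by wedge-summand from the equivalences $S^V \sm \erf \simeq S^{v(F)} \sm \erf$ supplied by the stalkwise Thom isomorphism, the restriction of $e(V)$ to the $F$-th summand is precisely the composite
\[
S^0 \sm \erf \lra S^V \sm \erf \simeq S^{v(F)} \sm \erf,
\]
which is the stalkwise Euler class $c(V)(F)$ as defined just before the preceding lemma. That lemma identifies this map with the classical ordinary cohomology Euler class $c_H(V^F) \in H^{2v(F)}(BG/F)$ under $[\erf, \erf]^G_* = H^*(BG/F)$.

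The main (and really only) step requiring care is checking that the splittings of $\efp$ and of $S^V \sm \efp$ are compatible in the sense I used above, so that the $F$-th component of the global map genuinely is the stalkwise map. Once that compatibility is in hand, taking the tuple $(c_H(V^F))_{F \in \cF}$ over all $F \in \cF$ gives the element of $\cOcF = \prod_F H^*(BG/F)$ claimed in the statement.
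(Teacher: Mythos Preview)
Your proposal is correct and matches the paper's approach: the corollary is marked \qqed\ in the paper with no written proof, since it is immediate from the definition of the global Euler class together with the preceding lemma identifying the stalkwise Euler class $c(V)(F)$ with $c_H(V^F)$. You have simply spelled out this immediate deduction, including the compatibility of splittings that makes the $F$-th component of the global map equal to the stalkwise map.
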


\section{Sheaves from spectra.}
\label{sec:piA}

Now that we understand the homotopy endomorphism ring
of $\efp$ we may forge the link with algebra:
since $[\efp , \efp ]^G_*=\cOcF$ by \ref{endef}, 
any spectrum $X \sm \decf$ has homotopy groups which are
$\cOcF$-modules.  In this section we give the proof that
$\piA_*$ takes values in $\cA (G)$ (stated as \ref{piAinAG}).

From  the definition of Euler classes we see that
$\piA_*(X)$ is  quasi-coherent.

\begin{prop}
\label{piAqc}
For any $G$-spectrum $X$ the object $\piA_*(X)$ is 
quasi-coherent in the sense that for any connected subgroup $K$, 
there is an isomorphism
$$\piA_*(X)(U(K))\cong \cEi_K \piA_*(X)(U(1)).$$
natural in $X$.
\end{prop}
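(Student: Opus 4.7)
The plan is to compute $\piA_*(X)(U(K))$ directly from its definition and identify the result as a localization of $\piA_*(X)(U(1))$ at the set $\cE_K$ of Euler classes.

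First I would unwrap the definitions. By Definition \ref{defn:piA},
\[
\piA_*(X)(U(K)) = \pi^G_*(D\efp \sm \siftyK \sm X) = \pi^G_*\bigl(D\efp \sm X \sm \colim_{V^K=0} S^V\bigr),
\]
where the colimit is taken over finite dimensional $G$-representations $V$ with $V^K=0$, ordered by inclusion (and realised as a filtered homotopy colimit in the usual way). Since $\pi^G_*$ commutes with filtered (homotopy) colimits, this rewrites as
\[
\piA_*(X)(U(K)) = \colim_{V^K=0} \pi^G_*(D\efp \sm S^V \sm X).
\]

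Next I would identify the transition maps. The map $S^V \to S^{V \oplus W}$ corresponding to an inclusion $V \hookrightarrow V \oplus W$ is, up to suspension, the smash product of the identity of $S^V$ with the inclusion $S^0 \to S^W$. After smashing with $D\efp \sm X$ the latter becomes, under the equivalence of Section \ref{sec:Endefp}, the action on $\pi^G_*(D\efp \sm X)$ of the Euler class $c(W) \in \cOcF = [\efp,\efp]^G_*$: this is exactly the content of the definition of the global Euler class as the composite $S^0 \sm \efp \to S^W \sm \efp \simeq \bigvee_F S^{w(F)}\sm \erf$. Since $W^K=0$ exactly when $c(W) \in \cE_K$, the colimit system is the telescope of multiplications by elements of $\cE_K$ acting on the $\cOcF$-module $\pi^G_*(D\efp \sm X) = \piA_*(X)(U(1))$.

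The colimit of such a telescope is by construction the localization, giving
\[
\piA_*(X)(U(K)) = \cEi_K \, \piA_*(X)(U(1)),
\]
which is the desired quasi-coherence isomorphism. Naturality in $X$ is automatic, since every step is functorial in the spectrum smashed into $D\efp \sm \siftyK$. The main obstacle is the middle step: making the identification of the transition maps precise requires some care, in particular keeping track of the suspension isomorphisms implicit in passing from the non-homogeneous element $c(W) \in \cOcF$ (which has $F$-th component $c_H(W^F) \in H^{2w(F)}(BG/F)$) to an actual self-map of $D\efp \sm X$ shifting degrees by the dimension function $w(F) = \dim W^F$. Once the Euler class has been matched with the transition map on each summand $\erf$ of $\efp$, commuting the colimit with $\pi^G_*$ and with the smash product is routine.
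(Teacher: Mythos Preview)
Your proof is correct and follows the same route as the paper: unwind the definition of $\piA_*(X)(U(K))$ as $\piG_*(D\efp \sm \siftyK \sm X)$, express $\siftyK$ as the filtered colimit of $S^V$ with $V^K=0$, and identify the transition maps with multiplication by the Euler classes of Section~\ref{sec:Euler}, so that the colimit computes the localization $\cEi_K\piA_*(X)(U(1))$. The paper compresses all of this into a single displayed chain of equalities, but the content is the same, and your discussion of the suspension bookkeeping is precisely what underlies the Thom-isomorphism identifications established there.
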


\begin{proof}
We combine the definition of $\piA_*$ with that of Euler classes to 
obtain
$$\piA_*(X)(U(K))=\piG_*(X \sm \defp \sm \siftyK)
=\cEi_K\piG_*(X \sm \defp)=\cEi_K \piA_*(X)(U(1)).$$
\end{proof}

We may now complete the proof that $S^0$ corresponds to the structure
sheaf $\cO$.

\begin{proof}[of Theorem \ref{piASisO}]
Recall that 
$$\piA_*(X)(U(K))=\pi^G_*(D\efp \sm \siftyK \sm X). $$
Taking $K=1$, we see that by \ref{endef}, $\piA_*(S^0)(U(1))=\cOcF$. 
By \ref{piAqc} $$\piA_*(S^0)(U(K))=\cEi_K \cOcF =\cO (U(K)).$$
\end{proof}

It is now evident from \ref{piAqc} that $\piA_*(X)$ is a sheaf of 
$\cO$-modules, and quasi-coherent in that sense. To complete the proof
that $\piA_*$ takes values in $\cA (G)$, it remains to 
show that $\piA_*(X)$ is extended.

\begin{lemma}
\label{picAatH}
The quasi-coherent $U$-sheaf $\piA_*(X)$ of $\cO$-modules is extended.
In fact, the value at $U(K)$ splits with 
$$\phi^K\piA_*(X)=\piGK_*(\PK X \sm \decfk )$$
since there is an isomorphism 
$$\piG_*(X \sm \defp \sm \siftyK )\cong
\cEi_K \cO_{\cF }\otimes_{\cOcFK}
\piGK_*(\PK X \sm \decfk )$$
natural in $X$.
\end{lemma}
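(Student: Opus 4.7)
The plan is to combine the quasi-coherence already proved in Proposition \ref{piAqc} with the splittings of $\efp$ and $\efk$ from \cite{EFQ} and the standard identification of $\siftyK$-smashing with geometric $K$-fixed points.

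First, by \ref{piAqc} we have
$$\piG_*(X \sm \defp \sm \siftyK) \cong \cEi_K \piG_*(X \sm \defp),$$
so it suffices to produce the required tensor identification after inverting $\cE_K$. Next, since smashing with $\siftyK$ kills all cells of isotropy not containing $K$, one has the standard identification $\piG_*(Y \sm \siftyK) \cong \piGK_*(\PK Y)$ for any $G$-spectrum $Y$; combined with the monoidality $\PK(X \sm \defp) \simeq \PK X \sm \PK \defp$ this gives
$$\piG_*(X \sm \defp \sm \siftyK) \cong \piGK_*(\PK X \sm \PK \defp).$$
The problem thus reduces to identifying $\piGK_*(\PK X \sm \PK \defp)$ with $\cEi_K \cOcF \otimes_{\cOcFK} \piGK_*(\PK X \sm \decfk)$. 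Specialising to $X = S^0$ and using quasi-coherence with \ref{endef} checks consistency: both sides become $\cEi_K \cOcF$.

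For the identification in general, the plan is to use the rational splittings
$$\defp \simeq \prod_{F \in \cF} D\erf, \qquad \decfk \simeq \prod_{\tK \in \cF/K} D\etKK$$
(from \cite{EFQ}) and partition $\cF = \bigsqcup_{\tK \in \cF/K} \{F : FK = \tK\}$ to match the factorisation
$$\cOcF = \prod_{\tK \in \cF/K} \prod_{F : FK = \tK} H^*(BG/F)$$
as a $\cOcFK$-module via inflation (Subsection \ref{subsec:FundInfl}). For each $\tK \in \cF/K$, I intend to show that the $\tK$-grouped contribution equals
$$\cEi_K\Big(\prod_{F : FK = \tK} H^*(BG/F)\Big) \otimes_{H^*(BG/\tK)} \piGK_*(\PK X \sm D\etKK),$$
using the endomorphism identification $[\etKK,\etKK]^{G/K}_* = H^*(BG/\tK)$ (the $G/K$-analogue of Theorem \ref{enderf}) and the Euler class Thom isomorphisms of Section \ref{sec:Euler} to realise the base change from $H^*(BG/\tK)$ to $\cEi_K\prod_{F:FK=\tK}H^*(BG/F)$. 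Assembling these contributions over $\tK \in \cF/K$ and using Corollary \ref{faithfulflatness} to reorganise tensor products over products of rings gives the claimed formula, with naturality in $X$ being automatic.

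The hard part will be the third step. Since $\PK$ does not preserve arbitrary products, one cannot simply apply $\PK$ termwise to $\prod_F D\erf$; instead, the grouped product must be handled carefully so that the failure of product-commutativity is precisely what supplies the $\cEi_K \cOcF$-extension along $\cOcFK$. The Euler class calculations of Section \ref{sec:Euler} and the endomorphism-ring identifications of Section \ref{sec:Endefp} are both essential here, as they pin down the algebraic and topological structures that need to be matched.
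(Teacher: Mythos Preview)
Your plan has a genuine gap and is taking a much harder route than necessary.

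The core problem is that you never construct a \emph{map} between the two sides. You compute $\piG_*(X\sm\defp\sm\siftyK)$ as $\piGK_*(\PK X\sm\PK\defp)$ and then hope to identify $\PK\defp$ with something that, after tensoring, yields $\cEi_K\cOcF\otimes_{\cOcFK}\piGK_*(\PK X\sm\decfk)$. But as you yourself note, $\PK$ does not commute with the infinite product $\prod_F D\erf$, and $\PK$ also does not commute with function spectra, so there is no direct route to $\PK\defp$. Your ``hard step'' is not a technical wrinkle but the entire content of the lemma, and you have no mechanism in place to produce a natural comparison map whose source and target are what you want.

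The paper's argument bypasses all of this. Rather than computing $\PK\defp$, it writes down a natural transformation of homology theories in $X$ arising from a single map of spectra
\[
\inflGKG(\PK X\sm\decfk)\lra X\sm\defp\sm\siftyK,
\]
built from the dual of the $G$-map $\efp\to\efk$ together with inflation. One then checks this is an isomorphism on $X=S^0$ (where it reduces to the definition of the $\cOcFK$-module structure on $\cOcF$), observes that both sides carry Thom isomorphisms, and invokes the $Rep(G)$-isomorphism argument (Theorem~\ref{RepG}) to conclude it is an isomorphism for all cells $G/H_+$, hence for all $X$. No splitting of $\defp$, no analysis of $\PK$ on products, no reassembly over $\tK\in\cF/K$ is required. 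The moral: when you have a candidate natural transformation between homology theories, check it on generators rather than attempting to compute both sides in closed form.
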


\ppf\ There is a natural transformation arising from 
$$\inflGKG (\PK X \sm \decfk ) \lra X \sm \defp \sm \siftyK . $$
This gives a natural transformation of homology theories of $X$, so 
 we need only  check it is an isomorphism for various cells
$X=G/H_+$. If $X=S^0=G/G_+$ the map is an isomorphism by definition. 
The general case follows by the $Rep(G)$-isomorphism 
argument (Theorem \ref{RepG})  since we have Thom  isomorphisms on both sides.
\qqed

\section{Adams spectral sequences.}
\label{secASS}

It is clear that $\piA_*$ is  functorial and exact, and  therefore 
by \ref{piAinAG} it defines  a homology functor
$$\piA_*: \Gspec \lra \cA (G) .$$
with values in the abelian category $\cA (G)$ of finite injective dimension.

\begin{thm}
The homology theory $\piA_*$  gives a convergent Adams spectral sequence
which collapses by  $E_{2r+1}$.
\end{thm}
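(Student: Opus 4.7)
The plan is to construct, for each pair of $G$-spectra $X$ and $Y$, a topological Adams tower over $X$ whose associated spectral sequence computing $[X,Y]^G_*$ has $E_2$-term $\Ext_{\cA(G)}^{*,*}(\piA_*(X), \piA_*(Y))$. Collapse and convergence will then follow from the finite injective dimension of $\cA(G)$ (Theorem \ref{injdimisrank}) together with a Whitehead theorem on $\cA$-acyclics.

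First I would realize enough injectives. By Lemma \ref{enoughinjectives}, arbitrary sums of the $I(\tL) = f_L(H_*(BG/\tL))$ form a sufficient supply of injectives in $\cA(G)$, and the realization lemma (the identification $\piA_*(\Sigma^{-c}\ertK) \cong I(\tK)$, with $c$ the codimension of $\tK$) exhibits each such sum as $\piA_*$ of a corresponding wedge of shifted $\ertK$'s. Thus every injective I will need to resolve $\piA_*(X)$ is the image of an explicit $G$-spectrum.

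Second, and this is the key technical input, I would prove that for any injective spectrum $I$ of the form above the natural map
\[ \piA_* : [X, I]^G_* \lra \HomA(\piA_*(X), \piA_*(I)) \]
is an isomorphism for every $G$-spectrum $X$. Both sides are cohomology theories in $X$, so it suffices to check the statement on a collection of generating cells. After splitting $G/K_+$ into basic cells $\sK$ via rational Burnside-ring idempotents, the verification reduces to matching $[\sK, \Sigma^{-c}\ertL]^G_*$ with $\HomA(\piA_*(\sK), I(\tL))$; the endomorphism calculations of Section \ref{sec:Endefp}, especially Theorems \ref{endef} and \ref{enderf}, together with the explicit description of $\piA_*(\sK)$ coming from the basic-cell calculations, make this a direct comparison of polynomial/exterior-algebraic pieces.

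With these two ingredients I would build an Adams tower inductively: embed $\piA_*(X) \hookrightarrow J_0$ into a realized injective, lift to a $G$-map $X \to I_0$ via step two, take the fibre $X_1$, and iterate. The $E_1$-page of the resulting mapping-in spectral sequence is $\HomA(\piA_*(X), J_\bullet)$, which by construction computes $\Ext_{\cA(G)}^{*,*}(\piA_*(X), \piA_*(Y))$ at $E_2$. Theorem \ref{injdimisrank} bounds the length of the injective resolution by $2r$, so the tower has only finitely many nonzero stages; this forces collapse at $E_{2r+1}$ and strong convergence, since the final fibre has vanishing $\piA_*$ and is therefore contractible by the geometric fixed point Whitehead theorem. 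The main obstacle is step two: everything else is formal homological-algebraic bookkeeping, but establishing that $\piA_*$ detects maps into the realized injective spectra is where the rational splitting, the basic-cell decomposition, and the endomorphism calculations must be assembled carefully.
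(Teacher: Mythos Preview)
Your overall architecture matches the paper's, but the tower construction has a genuine error: you resolve the wrong variable. An injective resolution $J_\bullet$ of $\piA_*(X)$ does not compute $\Ext_{\cA}^*(\piA_*(X),\piA_*(Y))$ via $\HomA(\piA_*(X),J_\bullet)$; that complex has cohomology $\Ext_{\cA}^*(\piA_*(X),\piA_*(X))$. And topologically, if you build the tower over $X$ with injective layers $I_s$ and then apply $[-,Y]^G_*$, the $E_1$-terms are $[I_s,Y]^G_*$; there is no reason for these to be algebraic, since the isomorphism $[X,I]^G_*\cong\HomA(\piA_*(X),\piA_*(I))$ you establish in step two concerns maps \emph{into} realized injectives, not out of them.

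The fix is exactly what the paper does: take an injective resolution $J_\bullet$ of $\piA_*(Y)$, realize it as a tower over $Y$ (so $Y=Y_0$ and $Y_{s+1}$ is the fibre of $Y_s\to\Sigma^{-s}I_s$), and apply $[X,-]^G_*$. Your step two then identifies $E_1^{s,*}\cong\HomA(\piA_*(X),J_s)$, giving the desired $\Ext$ at $E_2$. The remainder of your outline (realization of injectives, the injective-case isomorphism, collapse from finite injective dimension, Whitehead for convergence) is indeed the paper's argument. One small caveat: the detection step is not the bare geometric fixed point Whitehead theorem but Lemma~\ref{univWhitehead}, which also needs faithful flatness of $\cEi_K\cOcF$ over $\cOcFK$ (Corollary~\ref{faithfulflatness}) together with the $Rep(G)$-isomorphism argument of Theorem~\ref{RepG} to deal with the $\cF$-free part $X\sm EG_+$.
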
 

We apply the usual method for constructing an Adams spectral sequence
based on a homology theory $H_*$ on a category $\C$ with values in an 
abelian category $\cA$ (in our case $\C$ is the category of rational
$G$-spectra, $\cA =\cA (G)$ and $H_*=\piA_*$). It may be helpful to 
summarize the process: to construct an Adams spectral 
sequence for calculating $[X,Y]$ we proceed as follows. 

\noindent
{\bf Step 0:} Take an injective resolution 
$$0 \lra H_*(Y) \lra I_0 \lra I_1 \lra I_2 \lra \cdots $$
in $\cA$.  

\noindent
{\bf Step 1:} Show that enough injectives $I$ of $\cA$ (including the $I_j$)
can be realized by  objects $\bI$ of $\C$ in the sense that $H_*(\bI)\cong I$.
 
\noindent
{\bf Step 2:} Show that the injective case of the spectral sequence is 
correct in that homology gives an isomorphism
$$[X,Y] \stackrel{\cong}\lra \Hom_{\cA}(H_*(X),H_*(Y))$$
if $Y$ is one of the injectives $\bI$ used in Step 1.

\noindent
{\bf Step 3:} Now construct the Adams tower 
$$\diagram 
\vdots &\\
.\dto &\\
Y_2 \rto \dto & \Sigma^{-2} \bI_2\\
Y_1 \rto \dto & \Sigma^{-1} \bI_1\\
Y_0 \rto      & \Sigma^0 \bI_0\\
\enddiagram$$
over $Y=Y_0$ from the resolution. This is a formality from Step 2. 
We work up the tower, at each stage defining $Y_{j+1}$ to be
the fibre of $Y_j \lra \Sigma^{-j}\bI_j$, and noting that $H_*(Y_{j+1})$ 
is the $(j+1)$st syzygy of $H_*(Y)$.

\noindent
{\bf Step 4:} Apply $[X, \cdot ]$ to the tower. By the injective case
(Step 2), we identify the $E_1$ term with the complex $\Hom_{\cA}^*(H_*(X),I_{\bullet})$
and the $E_2$ term with $\Ext_{\cA}^{*,*}(H_*(X),H_*(Y))$.

\noindent
{\bf Step 5a:}  If the injective resolution is infinite, the first
step of convergence is to show that $H_*(\holim_j Y_j)$ is calculated
using a Milnor exact sequence from the inverse system $\{ H_*(Y_j)\}_j$,
and hence that $H_*(\holim_j Y_j)=0$. In our case, this is automatic, 
since the resolution is finite.

\noindent
{\bf Step 5b:} Deduce convergence from Step 5a by showing $\holim_j Y_j\simeq *$.
In other words we must show that $H_*(\cdot )$ detects isomorphisms
in the sense that $H_*(Z)=0$ implies $Z \simeq *$. In general, 
one needs to require that $\C$ is a category of appropriately complete objects for 
this to be true. This establishes conditional convergence. If $\cA$ has finite
injective dimension, finite convergence is then immediate.

\vspace*{2ex}
In our case Step 0 follows from Theorem \ref{injdimisrank}, and 
 Lemma \ref{enoughinjectives} shows only a small list of injectives
is required. Steps 3 and 4 are formalities, 
and Step 5a is automatic since $\cA$ has finite injective dimension
(\ref{injdimisrank}). This leaves us to complete Steps 1, 2 and  5b.

For Step 1,   we use the basic injective $G$-spectrum 
$\erK$ of \ref{defn:erK} which has stable isotropy only at the subgroup $K$.
The essential property is that this realizes the basic injectives $I(K)$
in $\cA (G)$. 

\begin{lemma}
\label{piAehisinj}
If $K$ is of codimension $c$ we have
$$I(K)=f_{\Kz}(\Sigma^cH_*(BG/K))=\piA_*(\erf )$$
and hence by \ref{enoughinjectives} there are enough realizable injectives.
\end{lemma}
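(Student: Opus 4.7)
My plan is to identify $\piA_*(\erK)$ as the asserted injective $f_{\Kz}(\Sigma^{c}H_*(BG/K))$ and then invoke Corollary \ref{enoughinjectives} to conclude. The argument breaks into two stages: first show $\piA_*(\erK)$ is concentrated below $\Kz$, forcing the $f_{\Kz}$-form; then compute the unique nonzero stalk at $\Kz$.

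For the first stage, unwinding Definition \ref{defn:piA} gives $\piA_*(\erK)(U(H)) = \pi^G_*(\erK \sm D\efp \sm \siftyH)$. By Lemma \ref{lemma:erK}, $\erK$ has stable isotropy only at $K$, while $\siftyH$ detects only isotropy containing $H$; hence for connected $H$ with $H \not\subseteq \Kz$ one has $H \not\subseteq K$ and $\siftyH \sm \erK \simeq *$. Thus $\piA_*(\erK)$ vanishes on $U(H)$ whenever $H \not\subseteq \Kz$, and the adjunction in Subsection \ref{subsec:evalext} forces $\piA_*(\erK) = f_{\Kz}(V)$ with $V = \phi^{\Kz}\piA_*(\erK)$.

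For the second stage, combining Lemma \ref{picAatH} with Lemma \ref{lemma:erK} yields
$$V = \pi^{G/\Kz}_*(\Phi^{\Kz}\erK \sm DE\cF/\Kz_+) = \pi^{G/\Kz}_*(E\lr{K/\Kz} \sm DE\cF/\Kz_+).$$
Setting $G' := G/\Kz$ (a $c$-torus) and $F := K/\Kz$ (a finite subgroup of $G'$), the problem reduces to computing $\pi^{G'}_*(E\lr{F} \sm DE\cF^{G'}_+)$. Splitting $E\cF^{G'}_+ \simeq \bigvee_{F'}E\lr{F'}$ as in \cite{EFQ} and using the orthogonality of basic cells with distinct finite isotropy (the Whitehead argument from Lemma \ref{FWhitehead} after idempotent splitting), only the summand $F' = F$ contributes, so I must identify $\pi^{G'}_*(E\lr{F} \sm DE\lr{F})$.

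The main obstacle is this last identification, together with bookkeeping the suspension shift. The cellular construction in the proof of \ref{enderf} builds $E\lr{F}$ from basic cells $\sF$ in even degrees parametrized by the monomial basis of $H^*(BG'/F)$. Atiyah duality for the $G'$-orbit $G'/F$ (a parallelizable $c$-dimensional manifold, giving $D(G'/F_+) \simeq \Sigma^{-c}G'/F_+$) transfers this to a dual cellular model of $DE\lr{F}$ whose cells are parametrized by $\Sigma^{-c}H_*(BG'/F)$. The self-smash pairing then computes $V \cong \Sigma^c H_*(BG'/F) = \Sigma^c H_*(BG/K)$ as a module over $[E\lr{F},E\lr{F}]^{G'}_* = H^*(BG'/F)$ from Theorem \ref{enderf}, and this extends to the required $\cOcFKz$-structure via the projection $\cOcFKz \twoheadrightarrow H^*(BG/K)$. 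Since $H_*(BG/K)$ is the injective hull of the residue field over the polynomial ring $H^*(BG/K)$, the module $V$ is injective and the sheaf $f_{\Kz}(V)$ is injective by Corollary \ref{sumsofinjectives}; combined with \ref{enoughinjectives} this delivers enough realizable injectives of the stated form.
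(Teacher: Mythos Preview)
Your reduction to the quotient torus $G'=G/K_1$ with finite $F=K/K_1$ is fine and matches the paper. The divergence is in how you handle $\pi^{G'}_*(E\lr{F}\sm DE\cF'_+)$ once you are there.

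The paper bypasses your orthogonality and Atiyah-duality manoeuvres with a single observation: since $E\cF'_+\to S^0$ is an $\cF'$-equivalence, so is its dual $S^0\to DE\cF'_+$; smashing with $E\lr{F}$ (which has isotropy only at $F\in\cF'$) therefore gives $E\lr{F}\sm DE\cF'_+\simeq E\lr{F}$. One is then left with $\pi^{G'}_*(E\lr{F})=\Sigma^c H_*(BG'/F)$, and the torsion property forces the $f_1$-form. No splitting of the dual, no cell-by-cell dualization, no self-smash pairing.

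Your route, by contrast, splits $E\cF'_+$ as a wedge, but its dual is then a \emph{product} $\prod_{F'}DE\lr{F'}$, and the claim that ``only the summand $F'=F$ contributes'' is not justified by Lemma~\ref{FWhitehead} alone: you cannot freely move $D$ across the smash, and the product does not decompose under smashing. (In fact the statement is true---one can deduce $E\lr{F}\sm DE\lr{F}\simeq E\lr{F}$ from the paper's $\cF$-equivalence argument applied to the idempotent piece---but your cited reason does not establish it.) The final step, where Atiyah duality on $G'/F_+$ is said to give a dual cellular model and the ``self-smash pairing then computes $V$'', is a sketch rather than a computation; you would still need to identify $\pi^{G'}_*$ of the result, which is exactly the computation the paper does directly.

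So the architecture is right, but you are missing the one-line simplification that $DE\cF'_+$ is $\cF'$-equivalent to $S^0$; once you have that, both the orthogonality step and the Atiyah-duality step become unnecessary.
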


\ppf\ First, by \ref{lemma:erK},  we have
$$\erK =S^{\infty V(\Kz)} \sm E \langle K/\Kz \rangle,  $$
so that
$$\pi^G_*(D\efp \sm \erK)=\cEi_{\Kz} \cOcF \otimes_{\cOcFK} \pi^{G/\Kz}_*
(DE(\cF /\Kz)_+ \sm E \langle K/\Kz \rangle).$$
The result therefore follows from the special case in which $K=F$ is
finite and $c=r$.

Now, since $S^0 \lra D\efp$ is an $F$-equivalence, 
$D\efp \sm \erf \simeq \erf$ and therefore
$$\pi^G_*(D\efp \sm \erf)=\Sigma^r H_*(BG/F).$$
Since this is a torsion module
$$\piA_*(\efp)=f_1(\Sigma^r H_*(BG/F)).$$ 
\qqed

For Step 2 we prove the injective case of the Adams spectral sequence. 

\begin{lemma}
For any $G$-spectrum $X$, application of $\piA_*$ induces an 
isomorphism
$$[X,\erK ]^G \stackrel{\cong}\lra \Hom_{\cA}(\piA_*(X),\piA_*(\erK)). $$
\end{lemma}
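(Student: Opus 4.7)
The plan is to prove the injective case in three stages: reduce to the finite-isotropy situation via geometric fixed points, reduce further to basic cells via a cohomology-theory argument, and then check on basic cells by a universal coefficient isomorphism.

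\emph{Reduction to $K = F$ finite.} By Lemma \ref{lemma:erK}, $\erK \simeq \siftyKz \sm E\langle K/\Kz\rangle$. Since the $G$-spectrum inflated from $E\langle K/\Kz\rangle$ has stable isotropy only at subgroups containing $\Kz$, it is $\siftyKz$-local, and the standard equivalence between $\siftyKz$-local $G$-spectra and $G/\Kz$-spectra gives
$$[X,\erK]^G \cong [\Phi^{\Kz}X,\; E\langle K/\Kz\rangle]^{G/\Kz}.$$
On the algebraic side, $\piA_*(\erK) = f_{\Kz}(\Sigma^c H_*(BG/K))$ is concentrated below $\Kz$, so the $(\phi^{\Kz},f_{\Kz})$-adjunction of Subsection \ref{subsec:evalext}, combined with the identification $\phi^{\Kz}\piA_*(X) = \pi^{G/\Kz}_*(\Phi^{\Kz}X \sm DE(\cF/\Kz)_+)$ of Lemma \ref{picAatH}, identifies the algebraic Hom with its counterpart in $\cA(G/\Kz)$. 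Naturality reduces the claim to the same statement for the torus $G/\Kz$ and the finite subgroup $K/\Kz$, so henceforth we may assume $K=F$ is finite.

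\emph{Reduction to basic cells.} Both sides are now contravariant cohomology theories in $X$. For the LHS this is automatic. For the RHS, the adjunction identifies it with
$$\Hom_{\cOcF}\!\bigl(\pi^G_*(X \sm D\efp),\; \Sigma^r H_*(BG/F)\bigr),$$
and this is exact because $\Sigma^r H_*(BG/F)$ is an injective torsion $\cOcF$-module (it is the Matlis injective hull of the residue field of the polynomial ring $H^*(BG/F)$, as used in Lemma \ref{enoughinjectives}); both sides also convert wedges to products. By the $G$-equivariant Whitehead theorem together with the basic-cell splitting of $G/J_+$ from Section \ref{sec:Basic}, it then suffices to verify the iso on $X=\sigma_J^0$ for every closed subgroup $J$.

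\emph{Check on basic cells.} By the basic-cell computation $[\sigma_J^0,\sigma_F^0]^G_0 = 0$ unless $J = F$, the cellular construction of $\erf$ from cells $\sigma_F^0$ in Theorem \ref{enderf}, and Lemma \ref{FWhitehead}, the LHS $[\sigma_J^0,\erf]^G_*$ vanishes unless $J = F$; the RHS vanishes symmetrically, via the splitting $\efp \simeq \bigvee_{F'} \erf[F']$ and the orthogonality of the idempotents $e_J$ and $e_{F'}$. For $J=F$, the LHS is $H^*(BG/F)$ by Theorem \ref{enderf}, while the RHS is $\Hom_{H^*(BG/F)}(H^*(BG/F),\Sigma^r H_*(BG/F))$; the comparison map $\pi^{\erf}$ sends an endomorphism to its induced action on the distinguished generator of $\piA_*(\erf)$, and this is the standard universal coefficient isomorphism between the polynomial ring and its Matlis dual under the canonical shift.

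The main obstacle is Step 1: one has to verify with care that the algebraic $(\phi^{\Kz},f_{\Kz})$-adjunction is compatible, under $\piA_*$, with the geometric fixed-point equivalence of $\siftyKz$-local $G$-spectra and $G/\Kz$-spectra, so that the two sides are simultaneously transferred to the $G/\Kz$-problem. The subsequent steps are then essentially formal, resting on the injectivity built into the Matlis duality $H^*(BG/F) \leftrightarrow H_*(BG/F)$ and the cellular construction of $\erf$.
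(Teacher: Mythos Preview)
Your Step 1 is precisely the paper's own reduction: the paper sets up the same square with the geometric-fixed-point isomorphism on the left and the $(\phi^K,f_K)$-adjunction on the right, and argues by induction on $\dim G$ to reduce to the identity-component case (the paper in fact goes one step further to $K=1$, but this is immaterial). So the compatibility you flag as ``the main obstacle'' is already established machinery in the paper.

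Your Steps 2--3, however, diverge from the paper. After reducing to $K=1$, the paper does \emph{not} check on basic cells; instead it verifies the map on $X=S^0$ directly (injectivity because $\pi^G_*(S^0)\to\pi^G_*(S^0\sm DEG_+)$ is a monomorphism in degree~0, then a dimension count between finite-dimensional rational vector spaces), extends to $X=S^V$ by the Thom isomorphisms available on both sides, and invokes the $\mathrm{Rep}(G)$-isomorphism argument (Theorem~\ref{RepG}) to conclude for all $G/H_+$ and hence all $X$. This is exactly the purpose for which Theorem~\ref{RepG} was set up, and it bypasses any cell-by-cell casework.

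Your basic-cell route is legitimate in principle but the execution has real gaps. The ``orthogonality of $e_J$ and $e_{F'}$'' you invoke for the RHS vanishing is not a valid step: $e_J\in A(J/J_1)$ acts on the source via the splitting of $G/J_+$, while $e_{F'}\in\cOcF$ acts via the target module structure --- these idempotents live in different rings and there is no direct orthogonality relation between them. You also do not explicitly handle $J$ with $\dim J_1>0$. Most seriously, for $J=F$ you merely assert that the comparison is ``the standard universal coefficient isomorphism'', but identifying the two sides abstractly is not the same as showing that \emph{this particular map} $\piA_*$ is bijective --- which is exactly the content of the lemma. The paper's injectivity-plus-dimension argument for $S^0$ is the honest work here, and you have not supplied a substitute.
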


\ppf\ Let $N=\piA_*(X)$, and  argue by induction on the dimension of $G$.

For $\erK$ we combine the following diagram
$$\diagram
[X, \erK ]^G \dto^{\cong} \rto & \Hom_{\cA} (N, f_K(\Sigma^c H_*(BG/K ))\dto_{\cong} \\
[\PK X, EG/K_+ ]^{G/K} \rto &\Hom_{H^*(BG/K)} (\phi^KN, \Sigma^c H_*(BG/K))
\enddiagram$$
with a result for $G/K$ to show the bottom
horizontal is an isomorphism. 

For notational simplicity we treat the case $K=1$, where
we are left to show
$$\piG_*: 
[X,\eg ]^G \stackrel{\cong}\lra \Hom_{H^*(BG)} 
(\pi^G_*(X \sm \deg ), \Sigma^r H_*(BG) ).$$
It is easy to see the groups are isomorphic for $X=S^0$. Passage to homology
is injective because $\piG_*(S^0) \lra \piG_*(S^0 \sm \deg )$ is a 
monomorphism in degree 0. Since $\piG_*$
 compares rational vector spaces of equal finite
dimension when $X=S^0$, it is an isomorphism. 
There are Thom isomorphisms in algebra
and topology, so it follows that passage to homology is an isomorphism 
for $X=S^V$ for any complex representation $V$. 
By the $Rep(G)$-isomorphism argument (Theorem \ref{RepG}) it is an isomorphism for
$X=G/K_+$ for any subgroup $K$ and hence in general. \qqed

Finally, for Step 5b we prove the universal Whitehead Theorem.

\begin{lemma}
\label{univWhitehead}
The functor $\piA_*$ detects isomorphisms in the sense that
if $f: Y \lra Z$ is a map of $G$ spectra inducing an isomorphism
$f_* : \piA_*(Y ) \lra \piA_*(Z)$ then $f$ is an equivalence. 
\end{lemma}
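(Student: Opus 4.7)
I take the cofibre $X$ of $f$; by exactness of $\piA_*$ the hypothesis gives $\piA_*(X) = 0$, and it suffices to prove $X \simeq *$. By the standard geometric fixed-point Whitehead theorem for compact Lie groups, this is equivalent to showing $\Phi^{\tilde K} X \simeq *$ (as an ordinary spectrum) for every closed subgroup $\tilde K \leq G$. Writing $\tilde K$ as a finite extension of its identity component $K$ and using $\Phi^{\tilde K} X = \Phi^{\tilde K/K}\Phi^K X$, the problem reduces to showing $\Phi^K X \simeq *$ as a $G/K$-spectrum for every connected subgroup $K \leq G$. I proceed by induction on the rank $r = \dim G$; the base case $r=0$ is trivial, as then $\efp = S^0$ and $\piA_*(X) = \pi_*(X)$.

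For the inductive step, first fix a connected subgroup $K \neq 1$. Applying Lemma~\ref{picAatH} to $\Phi^K X$ viewed as a $G/K$-spectrum, for every connected $L$ with $K \subseteq L$ we have
\[
\phi^{L/K}\bigl(\pi^{\cA(G/K)}_*(\Phi^K X)\bigr) \;=\; \pi_*^{G/L}(\Phi^L X \wedge DE(\cF/L)_+) \;=\; \phi^L\piA_*(X) \;=\; 0.
\]
Quasi-coherence and extendedness then force the sheaf $\pi^{\cA(G/K)}_*(\Phi^K X)$ on $G/K$ to vanish. Since $G/K$ has rank $r - \dim K < r$, the inductive hypothesis gives $\Phi^K X \simeq *$. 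Consequently $X$ has geometric isotropy supported only at finite subgroups, so $X \simeq X \wedge \efp$, and the splitting $\efp \simeq \bigvee_{F \in \cF}\erf$ of Section~\ref{sec:Endefp} yields $X \simeq \bigvee_F X \wedge \erf$, each summand having stable isotropy only at $F$.

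The remaining hypothesis is $\piA_*(X)(U(1)) = \pi^G_*(D\efp \wedge X) = 0$. Dualising the splitting of $\efp$, and using the orthogonality of the Burnside idempotents (which forces the cross-terms $\erf \wedge DE\langle F'\rangle$ to vanish for $F \neq F'$, by disjointness of geometric isotropy), this decomposes into $[\erf, X \wedge \erf]^G_* = 0$ for each finite $F$. It remains to deduce $X \wedge \erf \simeq *$, which by Lemma~\ref{FWhitehead} is equivalent to $[\sigma_F^0, X \wedge \erf]^G_* = 0$.

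The main obstacle is this final step. I would handle it by passing to geometric $F$-fixed points: since $Y := X \wedge \erf$ has stable isotropy only at $F$, it is determined by $\Phi^F Y = \Phi^F X \wedge \Phi^F\erf = \Phi^F X \wedge EG/F_+$ as a free $G/F$-spectrum. The vanishing $[\erf, Y]^G_* = 0$ translates across this equivalence into a Borel-cohomological vanishing for $\Phi^F X \wedge EG/F_+$, which, since free $G/F$-spectra are detected by their underlying non-equivariant homotopy, forces $\Phi^F Y \simeq *$ and hence $Y \simeq *$. This completes the induction and the proof.
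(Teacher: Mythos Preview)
Your reduction to $\piA_*(X)=0 \Rightarrow X\simeq *$, the induction on rank, and your treatment of connected $K\neq 1$ are all fine and in fact slightly cleaner than the paper's version (you identify $\phi^{L/K}\pi^{\cA(G/K)}_*(\Phi^K X)=\phi^L\piA_*(X)$ directly, whereas the paper appeals to faithful flatness of $\cE_K^{-1}\cO_{\cF}$ over $\cO_{\cF/K}$). The genuine gap is in your final paragraph.

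First, the decomposition you claim does not yield $[\erf, X\sm\erf]^G_*=0$. Once you know $X\simeq X\sm\efp$, the map $S^0\to D\efp$ is an $\cF$-equivalence, so $D\efp\sm X\simeq X\simeq\bigvee_F X\sm\erf$ and what actually follows from $\piA_*(X)(U(1))=0$ is $\pi^G_*(X\sm\erf)=[S^0,X\sm\erf]^G_*=0$ for each $F$, not the stronger statement you wrote.

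Second, and more seriously, the leap from this (or even from your stronger claim) to $X\sm\erf\simeq *$ is not justified by the sentence you give. Writing $Y=X\sm\erf$, you correctly note that $\Phi^F Y=\Phi^F X\sm EG/F_+$ is a free $G/F$-spectrum, and that free spectra are detected by their underlying non-equivariant homotopy. But $\pi^G_*(Y)=0$ is a statement about \emph{equivariant} homotopy (equivalently about homotopy fixed points, i.e.\ Borel cohomology), not about the underlying homotopy of $\Phi^F Y$; these are different, and the former does not formally imply the latter. This passage from $\pi^G_*$-vanishing of a free spectrum to its contractibility is exactly the content of the $Rep(G)$-isomorphism argument (Theorem~\ref{RepG}), which you have not invoked. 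The paper handles $K=1$ differently: it shows $X\sm EG_+\simeq *$ by observing that $X\sm EG_+$ has Thom isomorphisms, reducing via Theorem~\ref{RepG} to $\pi^G_*(X\sm EG_+)=0$, and then deducing this from $\pi^G_*(X\sm D\efp)=0$ by writing $EG_+=S(\infty\mathbb{C}_1)_+\sm\cdots\sm S(\infty\mathbb{C}_r)_+$ and peeling off one sphere at a time via the cofibre sequences $Y\sm S(\infty\mathbb{C})_+\to Y\to Y\sm S^{\infty\mathbb{C}}$.
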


\ppf\  Since $\piA_*$ is exact, it suffices to prove that if $\piA_*(X)=0 $
then $X \simeq *$. We argue by induction on the dimension of $G$.

Suppose $\piA_*(X)=0$. From the geometric fixed point Whitehead
theorem (extend the treatment of \cite{assiet} to infinite compact
Lie groups or deduce the result from the Lewis-May fixed point
Whitehead Theorem \cite{lmsm}) it suffices to show that $\PK X$ 
is  non-equivariantly contractible for all $K$. 
Since
$\phi^K\piA_*(X)= \piGK_*(\PK X)$, and $\cEi_K \cOcF $ is faithfully 
flat over $\cOcFK$ by \ref{faithfulflatness},  the result follows by 
induction from the $G/K$-equivariant result provided $K \neq 1$.

It remains to deduce $X \sm \eg$ is contractible. Since we have 
Thom isomorphisms we note that by the $Rep(G)$-isomorphism argument 
(Theorem \ref{RepG}), it suffices to show that $\piG_*(X \sm \eg)=0$. 
Now $\ef \lra S^0$ is a non-equivariant equivalence so 
that $\decf \sm \eg \simeq DS^0 \sm \eg =\eg$, so that it
is enough to show $\piG_*(X \sm \decf \sm \eg )=0$.

However if $Y$ has Thom isomorphisms and $\bC$ is a one dimensional 
representation then the cofibre sequence 
$$Y \sm \freec \lra Y \lra Y \sm \siftyc$$
shows that if $\piG_*(Y)=0$ then also $\piG_*(Y \sm \freec )=0$.
Writing $\eg =S(\infty \bC_1)_+\sm \ldots \sm S(\infty \bC_r)_+$
we reach the desired conclusion in $r$ steps.
\qqed

\section{The $Rep(G)$-isomorphism argument.}
\label{sec:RepG}

The present section records a method that is useful rather
generally in equivariant topology. It has nothing to do with the fact
that we are working rationally. 

When trying to establish an object is contractible or a map is 
an equivalence we want to use the most convenient test objects. 
The Whitehead Theorem 
says it suffices to use the set $\{ G/H_+ \; | \; H \subseteq G\}$:
if $[G/H_+,X]^G_*=\piH_*(X)=0$ for all $H$ then $X$ is contractible. 

\begin{defn}
A $G$-spectrum is $Rep (G)$-contractible if $[S^V,X]_*^G=0$ 
for all complex representations $V$.
\end{defn}

It is often easy to see $\piG_*(X)=0$. If we happen to have Thom 
isomorphisms $S^V \sm X \simeq S^{|V|} \sm X$ for complex representations
$V$ this shows that $X$ is $Rep(G)$-contractible. 
It does not necessarily follow that
$X$ is contractible  even if $X$ is rational and $G$
is abelian (for example if $G$ is cyclic of order $3$ and $X$ is a Moore
spectrum for a two dimensional simple representation \cite{assiet}) 
but it is useful to have a sufficient condition.

\begin{thm}
\label{RepG}
Suppose $G$ is an abelian compact Lie group.
If $X$ is a $Rep(G)$ contractible $G$-spectrum
and $G/H$ acts trivially on $\piH_*(S^{-V} \sm X)$ for all $H$ and all
desuspensions of $X$ then 
$X$ is contractible.  If $G$ is a torus,  the condition of trivial 
action may be omitted.
\end{thm}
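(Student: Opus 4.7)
The plan is to verify the hypothesis of the equivariant Whitehead theorem by showing $\pi^H_*(X) = [G/H_+, X]^G_* = 0$ for every closed subgroup $H \subseteq G$. I treat the torus case first, where the trivial-action condition is automatic: $G$ connected forces $G/H$ connected, hence $\pi_0(G/H) = 0$, and the $G/H$-action on any graded abelian group factors through $\pi_0$.

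The key device is the unit-sphere cofibre sequence $S(V)_+ \to S^0 \to S^V$ for a complex representation $V$ of $G$. Applying $[-, X]^G_*$ produces the long exact sequence
$$[S^V, X]^G_* \to [S^0, X]^G_* \to [S(V)_+, X]^G_* \to [S^V, X]^G_{*-1},$$
whose outer terms vanish by $Rep(G)$-contractibility (including the case $V = 0$), forcing $[S(V)_+, X]^G_* = 0$. Induction on $k$ then yields the stronger statement
$$[S(\alpha_1)_+ \wedge \cdots \wedge S(\alpha_k)_+, X \wedge S^{-W}]^G_* = 0$$
for any characters $\alpha_1, \ldots, \alpha_k$ of $G$ and any complex representation $W$. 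The inductive step peels off the factor $S(\alpha_k)_+$ by applying $[Y_{k-1}, -]^G_*$ (where $Y_{k-1} = \bigwedge_{i<k} S(\alpha_i)_+$) to the cofibre sequence
$$X \wedge S^{-(W+\alpha_k)} \to X \wedge S^{-W} \to F(S(\alpha_k)_+, X \wedge S^{-W})$$
obtained by dualizing $S(\alpha_k)_+ \to S^0 \to S^{\alpha_k}$ into $F(-, X \wedge S^{-W})$; both flanking terms in the resulting LES vanish by the inductive hypothesis, since $W$ and $W + \alpha_k$ are both complex representations.

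For any closed $H \subseteq G = \T^r$ the quotient $G/H$ is again a torus, so I may choose characters $\alpha_1, \ldots, \alpha_k$ of $G$ that pull back from a $\Z$-basis of the character lattice of $G/H$. This yields a $G$-equivariant identification $G/H \cong S(\alpha_1) \times \cdots \times S(\alpha_k)$ and therefore $G/H_+ \cong \bigwedge_i S(\alpha_i)_+$. Taking $W = 0$ in the inductive result then gives $\pi^H_*(X) = [G/H_+, X]^G_* = 0$ for every $H$, and the Whitehead theorem concludes the torus case.

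The general abelian case $G \cong T \times F$ with $F$ finite abelian is the part I expect to be the main obstacle, because $G/H$ may now have non-trivial component group and the residual $G/H$-action on $\pi^H_*(X \wedge S^{-V})$ is genuine data. This is precisely where the trivial-action hypothesis intervenes. My plan is to split $X$ using the idempotents of the rationalized Burnside ring of the finite part $F$, reducing each summand to a torus-equivariant problem whose conclusion is provided by the argument above; the delicate step is checking that this idempotent splitting is compatible with the unit-sphere cofibrations in the torus argument, and that the trivial-action hypothesis is strong enough to guarantee that each summand remains $Rep$-contractible in the sense needed to re-run the induction.
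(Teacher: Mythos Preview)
Your torus argument is correct and takes a somewhat different route from the paper's. You decompose $G/H_+$ directly as a smash product $\bigwedge_i S(\alpha_i)_+$ and peel off the factors one at a time via the dualized unit-sphere cofibre sequences; the paper instead argues by induction on the size of $G$ (using the descending chain condition on closed subgroups), choosing for each proper $K$ an $H\supseteq K$ with $G/H$ a subgroup of the circle, showing $X$ is $Rep(H)$-contractible, and then invoking the inductive hypothesis for $H$. Your argument is pleasantly direct for the torus case and avoids the descent through subgroups, but the paper's inductive framework is exactly what makes the general abelian case go through without extra machinery.

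The genuine gap is your general abelian case, which you leave as an unexecuted plan. The paper's argument here is short and worth knowing, and it does not use Burnside-ring idempotents. In the inductive step one only needs to handle $H$ with $G/H$ either the circle (already done as in your argument) or finite cyclic. For $G/H$ finite cyclic with generator $g$, choose a faithful complex character $W(H)$ of $G/H$. From $S(W(H))_+ \to S^0 \to S^{W(H)}$ one gets $[S(W(H))_+, X]^G_* = 0$, and from the stable cofibre sequence
\[
G/H_+ \xrightarrow{\,1-g\,} G/H_+ \longrightarrow S(W(H))_+
\]
one deduces that $1-g$ acts as an isomorphism on $[G/H_+, X]^G_* = \pi^H_*(X)$. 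The trivial-action hypothesis forces $1-g=0$, so $\pi^H_*(X)=0$. Replacing $X$ by $S^{-V}\sm X$ throughout (this is precisely why the hypothesis is imposed for all desuspensions) gives $[S^V,X]^H_*=0$ for every complex $V$ of $G$; since every representation of an abelian subgroup extends to $G$, $X$ is $Rep(H)$-contractible and the induction closes. Your idempotent-splitting plan might be salvageable, but the $1-g$ cofibre sequence is the clean device that converts the trivial-action hypothesis directly into vanishing, and it fits seamlessly with the circle case you already have.
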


\ppf\ We argue by induction on the size of $G$: since compact Lie
groups satisfy the descending chain condition on subgroups we can 
assume the result is true for all proper subgroups. 
We know $\piG_*(X)=0$ by hypothesis, so by the Whitehead Theorem 
it suffices to show that $\piK_*(X)=0$ for all proper subgroups.
Now any proper subgroup $K$ lies in a subgroup $H$ with 
$G/H$ a subgroup of the circle. It therefore suffices by induction 
to establish that $X$ is $Rep(H)$-contractible. Since $H \subseteq G$, 
any trivial action condition will certainly be inherited by subgroups. 

If $G/H$ is a circle,  we use the cofibre sequence
$G/H_+ \lra S^0 \lra S^{V(H)}$ where $V(H)$ has kernel $H$.
We conclude that $[G/H_+,X]^G_*=0$, and more generally, by smashing with $S^V$,  
that $[S^V,X]^H_*=0$ for any representation $V$ of $G$.
Since $G$ is abelian, every representation of $H$ extends to one
of $G$, and so $X$ is $Rep(H)$-contractible, and hence we conclude $X$
is $H$-contractible by induction.

If $G/H$ is a finite cyclic group we choose a faithful representation 
$W(H)$ of $G/H$ and use the cofibre sequence 
$S(W(H))_+ \lra S^0 \lra S^{W(H)}$ and the stable cofibre sequence
$$G/H_+ \stackrel{1-g}\lra G/H_+ \lra S(W(H))_+, $$
where $g$ is a generator of $G/H_+$.
 The first shows that $[S(W(H))_+,X]^G_*=0$, 
and the second shows that $1-g$ gives an isomorphism of 
$[G/H_+, X]^G_*$. By the trivial action condition we conclude
$[G/H_+,X]^G_*=0$, and more generally $[S^V,X]^H_*=0$.
\qqed

\begin{remark}
(i) It suffices to assume that $G$ acts unipotently on $\piH_*(X)$
for all $H$. This is useful for $p$-groups in characteristic $p$.\\
(ii) Variants on this theorem are useful in other contexts. For 
instance any nilpotent or supersoluble finite group has maximal 
subgroups which are normal with cyclic quotient. However, not every 
representation of  a maximal subgroup extends to one for $G$, so 
additional hypotheses are necessary.\\
(iii) If we admit real representations, then no trivial action condition 
is necessary for subgroups of index 2 since the mapping cone of 
$G/H_+ \lra S^0$ is $S^V$ for a real representation $V$.
\end{remark}

\newcommand{\piAs}{\piA_*(\sH)}
\newcommand{\algconnH}{\mathrm{algconn}_H}
\newcommand{\ExtA}{\mathrm{Ext}_{\cA}}
\newcommand{\ExtAsts}[1]{\mathrm{Ext}_{\cA}^{s,t}(\piAs, #1)}
\newcommand{\fmax}{\mathrm{fmax}}

\section{A characterization of basic cells.}

We show that basic cells are characterized by their
homotopy. Apart from illustrating the use of the Adams
spectral sequence, this result is needed in \cite{tnq3}.

\begin{thm}
\label{charcells}
If $\piA_*(X) \cong \piA_*(\sH)$ then $X \simeq \sH$.
\end{thm}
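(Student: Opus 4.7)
The plan is to realize the given isomorphism $\alpha\colon \piA_*(\sH) \to \piA_*(X)$ by a topological map $f\colon \sH \to X$ and then invoke the universal Whitehead theorem \ref{univWhitehead}.

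First I would compute $\piA_*(\sH)$ explicitly. Because $\sH = e_{H/\Hz} G/H_+$ is a compact $G$-spectrum whose stable isotropy is concentrated at $H$ and the idempotent $e_{H/\Hz}$ further restricts attention to the finite group $H/\Hz$, the formula $\phi^K\piA_*(X) = \pi^{G/K}_*(\Phi^K X \wedge DE(\cF/K)_+)$ of Lemma \ref{picAatH}, combined with the idempotent splitting of $\efp$ from Section \ref{sec:Endefp} and the duality underlying Lemma \ref{endsF}, identifies $\piA_*(\sH) \cong f_{\Hz}(N)$ with $N$ a one-dimensional graded $\Q$-vector space supported on the $(H/\Hz)$-summand of $\cOcFH$: namely, the residue field of the polynomial ring $H^*(BG/H)$, placed in a single internal degree.

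Second I would run the Adams spectral sequence
$$E_2^{s,t} = \Ext^{s,t}_{\cA (G)}(\piA_*(\sH), \piA_*(X)) \Rightarrow [\sH, X]^G_{t-s},$$
in which $\alpha$ defines a class in $E_2^{0,0}$. The adjunction $\phi^{\Hz} \dashv f_{\Hz}$ of Subsection \ref{subsec:evalext}, together with the exactness of $\phi^{\Hz}$ (so that $f_{\Hz}$ preserves injectives), yields
$$\Ext^{s,t}_{\cA (G)}(f_{\Hz}(N), f_{\Hz}(N)) \cong \Ext^{s,t}_{\cOcFH}(N,N) \cong \Ext^{s,t}_{H^*(BG/H)}(k,k),$$
and the Koszul resolution of the residue field $k$ over the polynomial ring $H^*(BG/H)$ shows these Ext groups are concentrated on the line $t = -2s$. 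The outgoing differentials on $\alpha$ land in positions $(r, r-1)$ for $r \geq 2$, and the equation $r - 1 = -2r$ has no integer solution, so every such differential vanishes. Since no differentials enter $(0,0)$ either, we obtain $E_\infty^{0,0} = E_2^{0,0}$ and $\alpha$ is realized by a map $f \colon \sH \to X$ with $\piA_*(f) = \alpha$.

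Since $\piA_*(f)$ equals the isomorphism $\alpha$, Lemma \ref{univWhitehead} implies $f$ is an equivalence, proving $X \simeq \sH$. The main obstacle lies in Step 1: one must carefully track the idempotent $e_{H/\Hz}$ through the geometric fixed-point formula of Lemma \ref{picAatH} and use the duality behind Theorem \ref{endef} and Lemma \ref{endsF} to verify that the resulting object in $\cA (G)$ is precisely $f_{\Hz}$ of the residue field at the $H$-component (in the correct internal degree). Once this identification is in place, the Ext vanishing and the realization step are essentially formal.
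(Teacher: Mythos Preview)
Your overall strategy—run the Adams spectral sequence, show the class $\alpha\in E_2^{0,0}$ survives, then invoke Lemma~\ref{univWhitehead}—is exactly the paper's, but Step~1 contains a genuine error that breaks the computation when $\dim H_1>0$.

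The object $\piA_*(\sH)$ is \emph{not} of the form $f_{H_1}(N)$ unless $H$ is finite. A sheaf $f_{H_1}(N)$ is by definition \emph{constant} below $H_1$: for every connected $K\subseteq H_1$ the restriction $f_{H_1}(N)(U(1))\to f_{H_1}(N)(U(K))$ is the identity, so in particular $\cE_{H_1}$ already acts invertibly on $f_{H_1}(N)(U(1))$. This fails for $\piA_*(\sH)$. Take the simplest case $H=G$ connected, so $\sH=S^0$ and $\piA_*(S^0)=\cO$ by Theorem~\ref{piASisO}; then $\cO(U(1))=\cOcF$ while $\cO(U(G))=\cEi_G\cOcF$, and these differ because the Euler classes are not units in $\cOcF$. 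More generally, $\Phi^K\sH$ is nontrivial for \emph{every} connected $K\subseteq H_1$ (not just $K=H_1$), and the restriction maps $\piA_*(\sH)(U(K))\to\piA_*(\sH)(U(K'))$ for $K\subsetneq K'\subseteq H_1$ are genuine localizations, not isomorphisms. The idempotent $e_{H/H_1}$ trims the finite direction but does nothing to collapse the toral direction below $H_1$. Consequently your adjunction identification of the Ext groups with $\Ext_{H^*(BG/H)}(k,k)$ is unavailable, and the vanishing argument collapses.

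Your argument \emph{is} correct when $H$ is finite: then $H_1=1$, the only connected subgroup below $H_1$ is the trivial one, and indeed $\piA_*(\sigma_F^0)\cong f_1(\Q_F)$ with $\Q_F$ the residue field at the $F$-component. The paper handles the general case differently: rather than identifying $\piA_*(\sH)$ directly, it builds explicit geometric resolutions (Propositions~\ref{Koszulresn} and~\ref{codimresn}) connecting $\sH$ through $EG/H_+$ to wedges of the injective spectra $E\langle K\rangle$, and then propagates a slope-$1$ vanishing line through those resolutions via the connectivity bookkeeping of Lemma~\ref{ses} and Corollary~\ref{les}. The $f_K$ objects do enter, but only as the terms $\piA_*(E\langle K\rangle)$ of the resolution, not as $\piA_*(\sH)$ itself.
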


\begin{proof}
We apply the Adams spectral sequence  to calculate
$[\sH, X]^G_0$. In the rest of the section we will show
$$\ExtA^{s,t}(\piAs, \piAs )=0 \mbox{ for } t-s<s$$ 
(and hence in particular for $t-s<0$) so that the isomorphism  
$\piAs \stackrel{\cong}\lra \piA_*(X)$
lifts to a map $\sH \lra X$. Since $\piA_*$ detects equivalences, 
it follows that $\sH \simeq X$ as claimed. 
\end{proof}


\subsection{Resolutions of universal spaces.}
\label{subsec:resolutions}

Although the vanishing result we require is purely algebraic, it 
is illuminating to give a geometric realization. In later subsections
we will only use the algebraic exact sequences that arise by 
applying $\piA_*$ to the sequences of spectra.

\begin{prop}
\label{Koszulresn}
If $\dim (G/H)=d$ there is a sequence
$$G/H_+ \lra EG/H_+ \lra \binom{d}{1} \Sigma^2 EG/H_+ \lra 
\binom{d}{2} \Sigma^4 EG/H_+ \lra \cdots
\lra \Sigma^{2d} EG/H_+ $$
inducing an exact sequence in $\piA_*$.
\end{prop}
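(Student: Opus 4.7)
The plan is to realise the sequence as the topological incarnation of the Koszul resolution of $\Q$ over $H^*(BG/H)=\Q[x_1,\dots,x_d]$. Since $G$ is a torus and $H$ is closed, $T:=G/H$ is itself a torus of dimension $d$. Choose characters $\alpha_1,\dots,\alpha_d$ of $T$ giving a group isomorphism $T\cong S(\alpha_1)\times\cdots\times S(\alpha_d)$, and for each $i$ take the standard cofibre sequence of based $T$-spectra
$$S(\alpha_i)_+\lra S^0\lra S^{\alpha_i}$$
as the basic building block; everything below is constructed in $T$-spectra and then inflated to $G$-spectra.

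First I would smash these $d$ cofibre sequences together to produce a $d$-dimensional cube of $T$-spectra and then smash the whole cube with $ET_+=EG/H_+$. The cube carries a natural filtration indexed by the number $k$ of $S^{\alpha_i}$-factors, giving a tower of cofibre sequences whose $k$-th layer is $\bigvee_{|I|=k}S^{\alpha_I}\sm ET_+$, summed over $k$-subsets $I\subseteq\{1,\dots,d\}$ with $\alpha_I=\bigoplus_{i\in I}\alpha_i$. Since $ET$ is a free $T$-space, the equivariant Thom isomorphism $ET_+\sm S^V\simeq\Sigma^{|V|}ET_+$ identifies the $k$-th layer with $\binom{d}{k}\Sigma^{2k}EG/H_+$. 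At the bottom of the cube, $\bigwedge_iS(\alpha_i)_+\sm ET_+=T_+\sm ET_+$ is $T$-equivalent to $T_+=G/H_+$: the projection $T\times ET\lra T$ is a $T$-equivariant weak equivalence, since $ET$ is contractible and both spaces are $T$-free. Reading off the tower produces the desired sequence.

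For exactness in $\piA_*$, apply $\piA_*$ to the tower of cofibre sequences. The result is a chain complex in $\cA(G)$, and the plan is to identify its differentials, using Section \ref{sec:Euler}, as Euler-class multiplications: the connecting map inserting an $S^{\alpha_i}$-factor corresponds to multiplication by $c(\alpha_i)\in\cOcF$. Passing to the stalk at $H$, we have $\phi^H\piA_*(EG/H_+)$ equal, up to a shift, to $H_*(BG/H)$ as in the analysis behind \ref{piAehisinj}, with $H^*(BG/H)=\Q[c_1(\alpha_1),\dots,c_1(\alpha_d)]$. The induced complex is then exactly the classical Koszul complex on the regular sequence $c_1(\alpha_1),\dots,c_1(\alpha_d)$, which is a resolution.

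The main obstacle will be this identification of differentials: one must trace through the $d$-fold iterated mapping-cone structure of the cube and check that, after applying $\piA_*$, the connecting maps at each layer are the expected Euler-class multiplications with the correct signs, so that consecutive differentials compose to zero and the resulting complex matches the algebraic Koszul differential exactly. Once that compatibility is in place, the proposition reduces to the classical fact that $c_1(\alpha_1),\dots,c_1(\alpha_d)$ form a regular sequence in $H^*(BG/H)$, making the Koszul complex exact.
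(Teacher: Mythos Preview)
Your geometric construction---smashing the $d$ cofibre sequences $S(\alpha_i)_+\to S^0\to S^{\alpha_i}$ into a cube, smashing with $ET_+$, and reading off the layers via the free Thom isomorphism---is a perfectly valid way to produce the sequence, and it is genuinely different from what the paper does. The paper never builds the cube: instead it first reduces to the case $H=1$ and then works purely algebraically, using the identification $[EG_+,EG_+]^G_*=H^*(BG)$ from Theorem~\ref{enderf} to realise the $\Q$-dual of the Koszul resolution of $\Q$ over $H^*(BG)$ directly as a sequence of maps between suspensions of $EG_+$. Your approach has the advantage of making the maps explicit and of exhibiting the whole sequence as coming from a single filtered object; the paper's approach avoids the bookkeeping of signs in the cube that you correctly flag as the main obstacle.

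There is, however, a gap in your exactness argument. You check exactness only at the stalk $\phi^H$, where the module is (a shift of) $H_*(BG/H)$ and the complex is visibly the dual Koszul complex. But exactness in $\cA(G)$ requires exactness of $\phi^K$ for \emph{every} connected $K$, and for $K\subsetneq H_1$ the module $\phi^K\piA_*(EG/H_+)=\pi^{G/K}_*\!\bigl(E(G/H)_+\sm DE(\cF/K)_+\bigr)$ is not simply $H_*(BG/H)$. The paper handles this by its opening reduction: for any connected $K$, the geometric $K$-fixed points of both $G/H_+$ and $EG/H_+$ are either both contractible (if $K\not\subseteq H$) or are the corresponding objects for $G/K$ with subgroup $H/K$, so one is reduced to the case $H=1$ for each quotient torus. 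You can repair your argument in the same way, or alternatively observe that your cube construction is natural in $K$: applying $\Phi^K$ to the inflated $G/H$-cube gives the same cube inflated to $G/K$, so the identification with the dual Koszul complex at $\phi^{H_1}$ propagates to every $\phi^K$ with $K\subseteq H_1$.
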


\begin{proof}
Note the individual entries  $\piA_*(X)(K)$ are obtained by first
passing to geometric $K$-fixed points, and then extending scalars.
Next observe that for both $G/H_+$ and $EG/H_+$ these fixed points
are either both contractible or both copies of the corresponding spaces
for the quotient group $G/K$. It therefore suffices to deal with 
the special case $H=1$ and  show that it is exact in homotopy.

The proof  is based on algebra over 
the polynomial ring of $H^*(BG)$. Indeed, we may form an 
injective resolution of $\Q$ as the 
Matlis dual of the Koszul resolution for the polynomial ring of $H^*(BG)$.

First note that $[EG_+,EG_+]^G_*=H^*(BG)$ is a polynomial ring
on $r$ generators $x_1,\ldots, x_r$ of degree 2. The augmented
Koszul complex takes the form
$$\Q \lla F_0 \lla F_1 \lla F_2 \lla \cdots \lla F_r, $$
where
$$F_i=\binom{r}{i}\Sigma^{2i}H^*(BG), $$
and is exact. Dualizing, with respect to $\Q$ we obtain 
$$\Q \lra F^{\vee}_0 \lra F^{\vee}_1 \lra F^{\vee}_2 \lra \cdots \lra 
F^{\vee}_r .$$
Now using the identification of $[EG_+, EG_+]^G_*$ we realize this as
$$\sH \lra I_0 \lra I_1 \lra \cdots \lra I_r$$
where
$$I_i=\binom{r}{i}\Sigma^{2i}EG_+.$$
In this case (i.e., with $H=1$), the exactness of the dual Koszul 
complex immediately gives exactness in the category $\cA$.
\end{proof}


Now we need to deal with universal spaces $E\cF_+$, and 
will construct a resolution using a filtration of $\cF$.
To describe one step,  let  $\fmax (\cF)$ consist of the
subgroups of finite index in a maximal subgroup of $\cF$, 
and take $\cF^1 := \cF \setminus \fmax (\cF)$. Finally, 
we define the codimension  filtration of $\cF$ by 
$\cF^i=(\cF^{i-1})^1$.

\begin{prop}
\label{codimresn}
The Adams resolution of $E\cF_+$ is
$$E\cF_+ 
\lra \bigvee_{H \in \fmax (\cF)}E\lr{H}
\lra \bigvee_{H \in \fmax (\cF^1)}\Sigma^1E\lr{H}
\lra \cdots 
\lra \bigvee_{H \in \fmax (\cF^r)}\Sigma^rE\lr{H}$$
\end{prop}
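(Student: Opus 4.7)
The plan is to interpret the displayed sequence as an Adams tower realizing a minimal injective resolution of $\piA_*(E\cF_+)$ in $\cA(G)$: by Lemma~\ref{piAehisinj}, each wedge $\bigvee_{H \in \fmax(\cF^i)} \Sigma^i E\lr{H}$ realizes a direct sum of basic injective qce-sheaves of the form $\Sigma^i I(H)$, so the spectrum-level sequence corresponds stage-by-stage to an algebraic injective resolution of length $r$.

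First I would identify $\piA_*(E\cF_+)$ stalkwise, using $\Phi^K E\cF_+ \simeq E(\cF/K)_+$ (when $\cF/K$ is non-empty, and contractible otherwise) to read off $\phi^K\piA_*(E\cF_+)$, and thereby pinpoint the support of the sheaf in terms of membership in $\cF$. Then, using Theorem~\ref{Usheavesconstructible} together with Lemma~\ref{enoughinjectives}, one builds a purely algebraic injective resolution of $\piA_*(E\cF_+)$ whose $i$-th term is $\bigoplus_{H \in \fmax(\cF^i)} \Sigma^i I(H)$. The filtration $\cF \supseteq \cF^1 \supseteq \cdots \supseteq \cF^r$ is designed precisely so that passing from $\cF^i$ to $\cF^{i+1}$ strips off one codimension stratum, so the $i$-th syzygy is supported exactly on $\cF^{i+1}$ and its injective hull is the direct sum over the maximal stratum $\fmax(\cF^i)$.

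Next I would realize this algebraic resolution topologically one stage at a time. Setting $Y_0 = E\cF_+$, the injective case of the Adams spectral sequence (Step~2 of Section~\ref{secASS}) lifts each algebraic inclusion into an injective hull to a topological map $Y_i \to \bigvee_{H \in \fmax(\cF^i)} \Sigma^i E\lr{H}$, and the fibre $Y_{i+1}$ has $\piA_*(Y_{i+1})$ equal to the next syzygy by exactness of $\piA_*$. The displayed sequence arises by splicing these maps together with the connecting maps of the fibre sequences. After $r$ stages the support is exhausted by the dimension bound of Theorem~\ref{injdimisrank}, so the terminal fibre has vanishing $\piA_*$ and is contractible by the universal Whitehead theorem~\ref{univWhitehead}; this yields the desired equivalence at the bottom of the tower.

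The main obstacle will be verifying at each stage that the topological fibre realizes exactly the predicted syzygy, not merely something mapping onto it. This will require careful use of the adjunction $(\phi^K, f_K)$ from Subsection~\ref{subsec:evalext} together with the faithful-flatness result Corollary~\ref{faithfulflatness}, so that stripping off the codimension-$i$ stratum topologically corresponds precisely to the same operation in $\cA(G)$. A secondary technicality is that the wedges indexed by $\fmax(\cF^i)$ may be infinite; Lemma~\ref{sumprod} on sums of injectives is the relevant input, guaranteeing that the algebraic $\Hom$ computations factor through the sum rather than the product and hence match the infinite-wedge topological targets.
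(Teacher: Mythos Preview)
Your outline is a valid way to produce an Adams tower, but it is a different route from the paper's. The paper works geometrically rather than via the abstract Adams-tower machinery: it identifies the successive fibres \emph{explicitly} as the universal spaces $E\cF^i_+$. Concretely, the paper proves the single-step lemma that the cofibre of the natural inclusion $E\cF^1_+\to E\cF_+$ splits as $\bigvee_{H\in\fmax(\cF)}E\lr{H}$ (by obstruction theory and the Whitehead theorem), giving a cofibre sequence
\[
E\cF_+\lra \bigvee_{H\in\fmax(\cF)}E\lr{H}\lra \Sigma E\cF^1_+,
\]
and then shows this induces a short exact sequence in $\piA_*$ by the concrete observation that $\piA_*(E\cF^1_+)\to\piA_*(E\cF_+)$ is zero: the source is $\fmax(\cF)$-torsion while the relevant Euler classes act as non-zero-divisors on the target. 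Iterating this step with $\cF$ replaced by $\cF^i$ gives the whole resolution at once, with the $i$-th syzygy literally equal to $\piA_*(\Sigma^i E\cF^i_+)$.

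What this buys over your approach: you never need to \emph{compute} the syzygies or invoke the general realization Step~2 of Section~\ref{secASS}; the spectra $E\cF^i_+$ are handed to you and the torsion/non-zero-divisor check replaces all the homological algebra you sketch. Conversely, your route is more in the spirit of the general Adams construction and would transfer to situations lacking such convenient universal spaces. One small correction: your termination appeal to Theorem~\ref{injdimisrank} is not the right reason the tower stops at stage~$r$; the paper's argument is simply that $\cF^{r+1}=\emptyset$ once the codimension filtration has stripped away all dimensions, so $E\cF^{r+1}_+\simeq *$.
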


\begin{proof}
It is enough to  deal with a single step since the general 
case follows by repeating it.

\begin{lemma}
There is a cofibre sequence
$$E\cF_+ \lra \bigvee_{H \in \fmax(\cF)} E\lr{H} \lra \Sigma E\cF^1_+, $$
which induces a short exact sequence in $\piA_*$. 
\end{lemma}
\begin{proof}
To see
the cofibre sequence exists we need only show that the 
cofibre $C$ of the universal map $E\cF^1_+\lra E\cF_+$ splits as
a wedge as indicated. Indeed, we may construct a map 
$C \lra E\lr{H}$ which is an equivalence on $H$-fixed points
for each $H \in \fmax (\cF)$ by obstruction theory, and hence a map 
$C \lra \bigvee_{H \in \fmax(\cF)} E\lr{H}, $
since the sum coincides with the product up to homotopy.
 It is an equivalence by the Whitehead Theorem. To see that the cofibre
sequence gives an algebraic short exact sequence we observe that 
$E\cF^1_+ \lra E\cF_+$ must be zero, since $\piA_*(E\cF^1_+)$ is
$\fmax (\cF)$-torsion in the sense that if  $H \in \cF^1$ lies
in $K \in \fmax (\cF)$ then 
$$\cEi_{H/K}\piA_*(E\cF^1_+)(H)=\piA_*(E\cF^1_+)(K)=0, $$
whereas $\cE_{H/K}$ consists of non-zero divisors in  
$\piA_*(E\cF_+)(H)$.
\end{proof}

\end{proof}

\subsection{Algebraic connectivity.}
It is convenient to have some terminology to discuss the algebraic
counterparts of natural homotopy theoretic constructions. 

\begin{defn}
(i) We say that a spectrum $X$ is {\em algebraically $c$-connected at $H$} if 
$$\ExtAsts{\piA_*(X)}=0 \mbox{ for } t-s\leq c.$$
 If $X$ is
$c$-connected  but not $(c+1)$-connected, 
 we write $\algconnH (X)=c$, and if it is $c$-connected
 for all integers $c$ we write $\algconnH (X)=\infty$. 

(ii) We say that $X$ has
{\em vanishing line of slope 1} at $H$ if 
 $\ExtAsts{\piA_*(X)}=0$ for $t-s\leq s+\algconnH (X)$. 
\end{defn}

\begin{remark}
(i) If $X$ has  algebraic $H$-connectivity $c$ then 
$\Sigma^nX$ has algebraic $H$-connectivity $c+n$. 

(ii) If $X$ has vanishing line of slope 1 then 
in particular the non-vanishing Ext group is on the 0-line.

(iii) By the Adams 
spectral sequence, the first non-zero $\sH$-homotopy group of $X$ is
in dimension $\algconnH(X)+1$, and if $X$ has vanishing line of 
slope 1 then 
$$[\sH , X]^G_{\algconnH(X)+1}=\HomA (\piAs, \piA_*(X)).$$
\end{remark}

We may manipulate algebraic connectivity with a slope 1 vanishing
line, rather like homotopy groups.

\begin{lemma}
\label{ses}
If the sequence of spectra $X \lra Y \lra Z$ induces
a short exact sequence
$$0 \lra \piA_*(X)\lra \piA_*(Y)\lra \piA_*(Z)\lra 0$$
where $Y$ and $Z$ have vanishing lines of slope 1 and
$\algconnH (Y)=c, \algconnH(Z)=c+2$ then $X$ has a vanishing
line of slope 1 and $\algconnH(X)=c$.\qqed
\end{lemma}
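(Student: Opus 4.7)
The plan is to apply the functor $\Ext^{*,*}_{\cA}(\piAs, -)$ to the short exact sequence, yielding for each internal degree $t$ a long exact sequence
$$\cdots \to \Ext^{s-1,t}_{\cA}(\piAs, \piA_*(Z)) \to \ExtAsts{\piA_*(X)} \to \ExtAsts{\piA_*(Y)} \to \ExtAsts{\piA_*(Z)} \to \cdots$$
Both assertions then reduce to a diagram chase.

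For the slope 1 vanishing line of $X$ with $\algconnH(X) \geq c$, I would fix $(s,t)$ with $t - s \leq s + c$. The term $\ExtAsts{\piA_*(Y)}$ vanishes by the hypothesis on $Y$, while the term $\Ext^{s-1,t}_{\cA}(\piAs, \piA_*(Z))$ vanishes because
$$t - (s-1) = (t - s) + 1 \leq s + c + 1 = (s-1) + (c+2),$$
placing it below the slope 1 vanishing line for $Z$. The long exact sequence then forces $\ExtAsts{\piA_*(X)} = 0$. The key point is a slope-matching observation: shifting $s$ by one changes the threshold of the slope 1 vanishing line by two in internal degree, which exactly matches the two-unit gap between $\algconnH(Y) = c$ and $\algconnH(Z) = c + 2$.

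To see the connectivity bound is sharp, I would examine the long exact sequence at $(s,t) = (0, c+1)$:
$$0 \to \Ext^{0,c+1}_{\cA}(\piAs, \piA_*(X)) \to \Ext^{0,c+1}_{\cA}(\piAs, \piA_*(Y)) \to \Ext^{0,c+1}_{\cA}(\piAs, \piA_*(Z)).$$
The $Z$-term vanishes because $c + 1 \leq c + 2 = \algconnH(Z)$. The $Y$-term is non-zero: since $\algconnH(Y) = c$ exactly, some Ext group for $Y$ on the line $t - s = c + 1$ must be non-zero, and slope 1 vanishing kills everything on this line with $s \geq 1$, leaving the $s = 0$ group non-zero. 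Consequently $\Ext^{0,c+1}_{\cA}(\piAs, \piA_*(X)) \neq 0$, so $\algconnH(X) = c$.

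I do not anticipate any substantive obstacle; the lemma is essentially the statement that slope 1 vanishing passes to the kernel $X$ provided the two-unit connectivity gap between $Y$ and $Z$ matches the slope. The only item requiring a moment of care is the sharpness argument, which reduces to the elementary observation that a slope 1 vanishing line forces any first non-vanishing Ext group to lie on the $s = 0$ row.
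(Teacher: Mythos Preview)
Your proposal is correct and is exactly the standard long exact sequence argument in $\Ext_{\cA}^{*,*}(\piAs,-)$ that the paper leaves to the reader (the lemma is stated with a \qqed and no proof). The slope-matching observation and the sharpness check via the $s=0$ row are the right details to fill in.
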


This allows us to give estimates using resolutions.

\begin{cor}
\label{les}
If the sequence of spectra
$$X \lra Y_0 \lra Y_1 \lra Y_2 \lra \cdots$$ induces
an exact sequence
$$0 \lra \piA_*(X)\lra \piA_*(Y_0)\lra \piA_*(Y_1)\lra \piA_*(Y_2)\lra 
\cdots $$
where $Y_i$ has a  vanishing line of slope 1 and
$ \algconnH(Y_i)=c+2i$ then $X$ has a vanishing
line of slope 1 and $\algconnH(X)=c$.\qqed
\end{cor}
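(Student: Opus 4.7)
The plan is a purely algebraic dimension-shifting argument in the abelian category $\cA$, iterating the mechanism behind Lemma \ref{ses} along the given resolution. Set $M = \piA_*(X)$ and $N_i = \piA_*(Y_i)$, and break the given long exact sequence into short exact sequences
$$0 \lra Z_i \lra N_i \lra Z_{i+1} \lra 0$$
in $\cA$, where $Z_0 = M$ and $Z_{i+1} := \cok(Z_i \hookrightarrow N_i)$. The hypothesis on $Y_i$ translates as $\ExtA^{s,t}(\piAs, N_i)=0$ for $t-s\leq s+c+2i$. I claim the syzygies inherit the same bound:
$$\ExtA^{s,t}(\piAs, Z_i)=0 \quad \text{for } t-s\leq s+c+2i,$$
and taking $i=0$ gives the desired vanishing line for $M$.

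The claim is proved by induction on $s$, with $i\geq 0$ varying freely at each step. For the base $s=0$, the inclusion $Z_i \hookrightarrow N_i$ gives an injection $\Hom(\piAs, Z_i) \hookrightarrow \Hom(\piAs, N_i)$, and the latter vanishes in degrees $t\leq c+2i$. For the inductive step, the short exact sequence yields the fragment
$$\ExtA^{s-1,t}(\piAs, Z_{i+1}) \lra \ExtA^{s,t}(\piAs, Z_i) \lra \ExtA^{s,t}(\piAs, N_i)$$
of the Ext long exact sequence (available since $\cA$ has enough injectives by Lemma \ref{enoughinjectives}). The right term vanishes in the range $t-s\leq s+c+2i$ by hypothesis, while the left term vanishes there by the inductive hypothesis applied to $(s-1, i+1)$: the condition $t-(s-1)\leq (s-1)+c+2(i+1)$ rearranges to exactly $t-s\leq s+c+2i$. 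The main point to watch is precisely this arithmetic balance, which codes the fact that each unit descent in homological degree is compensated by two extra units of connectivity in the next syzygy---matching the increment in the hypothesis on the $Y_i$.

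To upgrade the vanishing line to the sharp statement $\algconnH(X)=c$, I note that the already-proven case $(s,i)=(0,1)$ gives $\Hom(\piAs, Z_1)=0$ in degree $c+1$, so the short exact sequence $0\to M\to N_0\to Z_1\to 0$ forces $\Hom^{c+1}(\piAs, M) \cong \Hom^{c+1}(\piAs, N_0)$, and the latter is nonzero because $\algconnH(Y_0)=c$. Apart from tracking the arithmetic and invoking the enough-injectives lemma there is no further obstacle; the induction closes immediately and no appeal to finite injective dimension of $\cA$ is required.
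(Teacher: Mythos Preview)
Your proof is correct and is exactly the dimension-shifting argument the paper has in mind: the corollary is marked as immediate from Lemma~\ref{ses}, and your Ext long-exact-sequence computation is precisely what underlies that lemma, iterated along the syzygies. Your decision to induct on $s$ rather than descend along the resolution is a clean touch, since it makes the argument work uniformly for unbounded resolutions without any appeal to finite injective dimension.
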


\subsection{Estimates of connectivity}
Finally we may use the algebraic resolutions from Subsection 
\ref{subsec:resolutions} to estimate algebraic connectivity.
The starting point is the fact that $E\lr{K}$ is injective so 
it is easy to determine connectivity, for example by using 
the known homotopy groups of universal spaces. 

\begin{lemma}
The spectrum $E\lr{K}$ has a slope 1 vanishing line and 
$$\algconnH (E\lr{K})+1=
\dichotomy{\dim (H/K)  &\mbox{ if } K \subseteq H}
{\infty &\mbox{ if } K \not \subseteq H.\qqed}$$
\end{lemma}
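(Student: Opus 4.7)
The decisive point is that by Lemma \ref{piAehisinj}, $\piA_*(E\langle K\rangle)\cong f_{K_0}(\Sigma^c H_*(BG/K))$ is an injective object of $\cA$, with $c=r-\dim K$.  Consequently $\ExtA^{s,\ast}(\piA_*(\sH),\piA_*(E\langle K\rangle))=0$ for every $s\ge 1$, which disposes of the slope-$1$ vanishing line immediately (the condition is vacuous above the $s=0$ row).  The problem therefore reduces to computing the first $t$ for which $\HomA^{t}(\piA_*(\sH),I(K))\neq 0$.

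Apply the $(\phi^{K_0},f_{K_0})$-adjunction of Subsection \ref{subsec:evalext} to rewrite this Hom as $\Hom^{t}_{\cOcF K_0}(\phi^{K_0}\piA_*(\sH),\,\Sigma^c H_*(BG/K))$.  Because $H_*(BG/K)$ is an $\cOcF K_0$-module only through the projection onto the $K$-th factor $H^*(BG/K)$, the Hom factors through the idempotent summand $e_K\cdot\phi^{K_0}\piA_*(\sH)$.  Combining Lemma \ref{picAatH} with the splitting $DE(\cF/K_0)_+=\prod_{\tilde K} DE\langle\tilde K\rangle$, I identify this summand as $\pi^{G/K_0}_*\bigl(\Phi^{K_0}\sH\wedge DE\langle K/K_0\rangle\bigr)$.

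A two-stage geometric fixed point analysis disposes of the case $K\not\subseteq H$.  If $K_0\not\subseteq H$ then $\Phi^{K_0}\sH=0$ and the Hom vanishes in every degree.  Otherwise $K_0\subseteq H_0\subseteq H$ and $\Phi^{K_0}\sH\simeq \sigma^{0,G/K_0}_{H/K_0}$; since $E\langle K/K_0\rangle$ has stable isotropy concentrated at $K/K_0$, passing to $\Phi^{K/K_0}$-fixed points yields $e_K\phi^{K_0}\piA_*(\sH)\cong\pi^{G/K}_*\bigl(\Phi^{K/K_0}\sigma^{0,G/K_0}_{H/K_0}\bigr)$, and when $K\not\subseteq H$ (equivalently $K/K_0\not\subseteq H/K_0$) the geometric $K/K_0$-fixed points of the basic cell are trivial, so the Hom again vanishes.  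Both subcases give $\algconnH=\infty$, completing the second line of the stated dichotomy.

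When $K\subseteq H$ we have $\Phi^{K/K_0}\sigma^{0,G/K_0}_{H/K_0}\simeq \sigma^{0,G/K}_{H/K}$.  Writing this basic cell as $e\cdot(G/H)_+$ with $(G/H)_+=(G/K)_+\wedge_{H/K}S^0$ and applying Wirthm\"uller together with the $G/K$-analogue of Theorem \ref{endef}, one identifies the $H^*(BG/K)$-module $e_K\phi^{K_0}\piA_*(\sH)$ with $H^*(BH/K)$ viewed as an $H^*(BG/K)$-module via the restriction $H^*(BG/K)\to H^*(BH/K)$.  The required Hom becomes
\[\Hom^{t}_{H^*(BG/K)}\bigl(H^*(BH/K),\;\Sigma^c H_*(BG/K)\bigr),\]
and since $H_*(BG/K)$ is the injective hull of $\Q$ over the polynomial ring $H^*(BG/K)$, a direct degree count (the submodule of $H_*(BG/K)$ annihilated by the kernel of $H^*(BG/K)\twoheadrightarrow H^*(BH/K)$ is $H_*(BH/K)$, sitting in the top $\dim(H/K)$ classes after the $\Sigma^c$-shift) gives first non-zero degree $t=c-(r-\dim H)=\dim(H/K)$, as claimed.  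The main obstacle is this final identification: one must carefully track the Wirthm\"uller dimension shift, the idempotents $e_{H/H_0}$ and $e_{H/H_0K}$ coming from the basic cells, and the grading convention for $H_*(BG/K)$ as an injective $H^*(BG/K)$-module; everything else is a formality from the adjunctions and the geometric fixed point isomorphism for spectra with isolated isotropy.
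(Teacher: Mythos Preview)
Your proposal is correct and follows essentially the same approach the paper intends: the paper gives no detailed proof here (the statement is marked with \qqed), noting only that $E\langle K\rangle$ is injective so connectivity is determined ``by using the known homotopy groups of universal spaces.''  Your route via the $(\phi^{K_0},f_{K_0})$-adjunction is one legitimate way to unpack this.  A marginally more direct variant, and probably what the paper has in mind, is to use the topological isomorphism $[X,E\langle K\rangle]^G\cong[\Phi^K X, EG/K_+]^{G/K}$ established in the injective case of the Adams spectral sequence: this reduces immediately to computing $[\sigma^0_{H/K},E(G/K)_+]^{G/K}_*$, which one reads off from $\pi^{H/K}_*$ of the restricted universal free space (a model for $E(H/K)_+$), with the usual dimension shift giving bottom degree $\dim(H/K)$.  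Your acknowledged ``main obstacle'' of tracking the Wirthm\"uller shift and the various idempotents is genuine but routine, and both routes arrive at the same place.
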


From this we may prove the required result by connecting 
$\sH$ to objects $E\lr{K}$ by the cofibre sequences of 
Subsection \ref{subsec:resolutions}, and applying \ref{les}. 
One expects to be able to apply \ref{les} only when
the bottom homotopy comes from the displayed sequences, and
not from the connecting homomorphism.
 
\begin{lemma}
The spectrum $EG/K_+$ has a slope 1 vanishing line and 
$$\algconnH (EG/K_+) +1 =\dim (H/K)  \mbox{ if } K \subseteq H.$$
\end{lemma}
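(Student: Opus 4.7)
The plan is to apply Proposition \ref{Koszulresn} with the parameter $H$ taken to be $K$, producing a finite Koszul-type resolution
\[
0 \lra \piA_*(\sK) \lra \piA_*(EG/K_+) \lra \binom{d}{1}\Sigma^2\piA_*(EG/K_+) \lra \cdots \lra \Sigma^{2d}\piA_*(EG/K_+) \lra 0
\]
(with $d=\dim(G/K)$), using the splitting $G/K_+\simeq \sK$ for connected $K$. This resolution links $\sK$ and $EG/K_+$; once one knows the algebraic connectivity and slope 1 vanishing line for either spectrum at $H$, the identity $\algconnH(\Sigma^{2i}\piA_*(EG/K_+)) = 2i + \algconnH(EG/K_+)$ makes the hypothesis of Corollary \ref{les} apply exactly, forcing the connectivity for the other to follow.

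The key step is therefore to compute $\algconnH(\sK)+1 = \dim(H/K)$ with a slope 1 vanishing line directly. Using Lemma \ref{picAatH} together with the Wirthm\"uller isomorphism, the sheaf $\piA_*(\sK)$ is explicitly described: $\phi^L\piA_*(\sK) \cong \cO_{\cF(K/L)}$ for connected $L\subseteq K$ and $0$ otherwise, while $\phi^L\piAs \cong \cO_{\cF(H/L)}$ for connected $L\subseteq H$. I would then resolve $\piA_*(\sK)$ by a finite complex of injectives $I(\tilde L)$ from Lemma \ref{piAehisinj} (with $\tilde L$ having identity component $\subseteq K$), and compute $\HomA(\piAs, I(\tilde L))$ via the adjunction $\phi^{\tilde L_0}\dashv f_{\tilde L_0}$. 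A careful tracking of degrees should show that the smallest non-vanishing Ext group occurs exactly at $t-s = \dim(H/K)-1$, yielding the claimed connectivity and vanishing line for $\sK$.

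Having the result for $\sK$, applying Corollary \ref{les} to the Koszul resolution propagates both the connectivity and the slope 1 vanishing line to $EG/K_+$, giving the desired identity $\algconnH(EG/K_+) + 1 = \dim(H/K)$ at once.

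The main obstacle will be the direct computation of $\algconnH(\sK)$. Although the $\phi^L$-values of $\piA_*(\sK)$ and $\piAs$ have concrete fixedpoint descriptions, Ext in the extended-sheaf category $\cA(G)$ is not computed termwise over $L$ but involves compatibility between the restriction maps of the $U$-topology and the basing structure of the extended modules. The delicate part is showing that the natural maps $\cO_{\cF(H/L)}\to\cO_{\cF(K/L)}$ that one can construct pointwise fail to assemble into morphisms of extended sheaves in sufficiently many degrees, so that all Ext groups strictly below $t-s = \dim(H/K)$ vanish while a non-zero class is forced to appear in that exact degree.
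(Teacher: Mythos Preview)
Your proposal has the logic running in the wrong direction. Corollary \ref{les} takes as \emph{input} that each $Y_i$ already has a slope~1 vanishing line with $\algconnH(Y_i)=c+2i$, and produces as \emph{output} the same conclusion for $X$. In the Koszul resolution of Proposition~\ref{Koszulresn} the object $X$ is $\sK$ and the objects $Y_i$ are (wedges of) suspensions of $EG/K_+$. So the corollary lets you pass from $EG/K_+$ to $\sK$, not the other way around; knowing the connectivity and vanishing line for $\sK$ does not, via that corollary, force the hypothesis on the $Y_i$'s. You would need an independent argument that $EG/K_+$ has a slope~1 vanishing line before you could even invoke \ref{les}, which is precisely the statement you are trying to prove.

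Moreover, your ``key step'' of computing $\algconnH(\sK)$ directly is the \emph{goal} of the entire section (it is what Theorem~\ref{charcells} needs), and the paper's strategy is to reach it in stages
\[
E\langle L\rangle \;\Longrightarrow\; EG/K_+ \;\Longrightarrow\; G/K_+\simeq\sK.
\]
You are proposing to attack the endpoint first and then reverse the middle arrow, which is both circular and harder.

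The paper instead uses Proposition~\ref{codimresn}: write $EG/K_+$ as $E\cF_+$ for the family $\cF$ of subgroups of $K$, and apply the codimension filtration. This gives an algebraic resolution whose $i$th term is a wedge of $\Sigma^i E\langle L\rangle$ with $L$ of codimension~$i$ in $K$. Each $E\langle L\rangle$ is injective, so the preceding lemma gives its slope~1 vanishing line and connectivity $\algconnH(E\langle L\rangle)+1=\dim(H/L)$ (or $\infty$). Since $\dim(H/L)+i=\dim(H/K)+2i$, the hypothesis of Corollary~\ref{les} is satisfied with $c=\dim(H/K)-1$, and the conclusion follows immediately. The point you missed is that the right resolution to use here is the one whose terms are already known to be injective.
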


\begin{proof}
We use the resolution described in \ref{codimresn}.
The family in question consists of subgroups of $K$. Accordingly
the $i$th term in the codimension filtration consists of groups
of codimension $i$ in $K$. Thus the algebraic connectivity 
of a term $\Sigma^iE\lr{L}$ in the $i$th stage of the resolution 
is $\dim(H/L)+i=\dim(H/K)+2i$ if $L \subseteq H$ (and $\infty$ otherwise).
This provides the hypothesis we need  to apply \ref{les}.
\end{proof}

The Koszul-type resolution of \ref{Koszulresn} gives the required
estimate for $G/K_+$.

\begin{cor}
The spectrum $G/K_+$ has a slope 1 vanishing line and 
$$\algconnH (G/K_+)+1=\dim (H/K)  \mbox{ if } K \subseteq H.\qqed $$
\end{cor}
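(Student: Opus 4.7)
The plan is to apply the Koszul-type resolution of Proposition~\ref{Koszulresn} (with $H$ there replaced by our $K$) together with Corollary~\ref{les}. Explicitly, Proposition~\ref{Koszulresn} supplies a sequence
$$G/K_+ \lra EG/K_+ \lra \binom{d}{1}\Sigma^2 EG/K_+ \lra \cdots \lra \Sigma^{2d}EG/K_+,$$
with $d=\dim(G/K)$, which induces an exact sequence on $\piA_*$. So I will regard this as an ``Adams-style'' resolution of $G/K_+$ by suspensions of $EG/K_+$ and read off the algebraic $H$-connectivity from the $EG/K_+$ case that has just been proved.

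The key bookkeeping step is to check the connectivity hypotheses needed by \ref{les}. By the preceding lemma, under $K\subseteq H$ the spectrum $EG/K_+$ has a slope 1 vanishing line with $\algconnH(EG/K_+)=\dim(H/K)-1$, and shifts of injectives still have a slope 1 vanishing line. Hence
$$\algconnH\bigl(\Sigma^{2i}EG/K_+\bigr)=\dim(H/K)-1+2i,$$
and a finite wedge $\binom{d}{i}\Sigma^{2i}EG/K_+$ is exactly the direct sum of finitely many copies (Ext commutes with finite sums in the second variable, so the vanishing line is preserved). Setting $c:=\dim(H/K)-1$, the $i$th term of the resolution has algebraic $H$-connectivity $c+2i$ and a slope 1 vanishing line, which is precisely the hypothesis of Corollary~\ref{les}. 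The conclusion is that $G/K_+$ has a slope 1 vanishing line and $\algconnH(G/K_+)=c=\dim(H/K)-1$, as required.

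There is essentially no obstacle in principle: the main point to be careful about is that Corollary~\ref{les} assumes that the bottom homotopy of $X$ really is detected by the map into $Y_0$ (and not by a connecting homomorphism further down the resolution), as was flagged in the remark preceding the lemma statement. Here this is transparent because the resolution is finite (of length $d+1$) and each term $\Sigma^{2i}EG/K_+$ shifts algebraic connectivity up by exactly $2i$, so the lowest-dimensional contribution to $\piA_*(G/K_+)$ comes from $EG/K_+$ itself; no telescope or $\holim$ convergence issue arises. The case $K\not\subseteq H$ is not claimed, and indeed one would just observe that then every $\Sigma^{2i}EG/K_+$ is algebraically $\infty$-connected at $H$, so $G/K_+$ is too.
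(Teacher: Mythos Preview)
Your proposal is correct and is exactly the argument the paper has in mind: the corollary is stated with a \qqed and the paper says only that ``the Koszul-type resolution of \ref{Koszulresn} gives the required estimate for $G/K_+$,'' which is precisely what you have unpacked---feed the resolution into Corollary~\ref{les} using the just-proved connectivity of $EG/K_+$. Your side remark on the case $K\not\subseteq H$ is not needed (and would require a word more of justification, since the preceding lemma is only stated for $K\subseteq H$), but you correctly flag that the corollary does not claim anything there.
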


If $H$ is connected then $\sH=G/H_+$, and in general 
$\sH$ is a retract of $G/H_+$, so we obtain the estimate required
for Theorem \ref{charcells}.

\end{document}